\newcommand{\R}{{\mathbb R}}
\newcommand{\ap}{\alpha}             
\newcommand{\bt}{\beta}
             \newcommand{\Gm}{\Gamma}
\newcommand{\dt}{\delta}             
\newcommand{\vep}{\varepsilon}
\newcommand{\ld}{\lambda}            \newcommand{\Ld}{\Lambda}
\newcommand{\sm}{\sigma}             
\newcommand{\vp}{\varphi}
\newcommand{\om}{\omega}             \newcommand{\Om}{\Omega}
\newcommand{\vr}{\varrho}            \newcommand{\iy}{\infty}
\newcommand{\f}{\frac}             \newcommand{\el}{\ell}
\newcommand{\fL}{{\mathfrak L}}
\newcommand{\fa}{{\mathfrak a}}
\newcommand{\fb}{{\mathfrak b}}
\newcommand{\fm}{{\mathfrak m}}
\newcommand{\BN}{{\mathbb N}}
\newcommand{\BR}{{\mathbb R}}
\newcommand{\cB}{{\mathcal B}}
\newcommand{\cC}{{\mathcal C}}
\newcommand{\cI}{{\mathcal I}}
\newcommand{\cJ}{{\mathcal J}}
\newcommand{\cK}{{\mathcal K}}
\newcommand{\cL}{{\mathcal L}}
\newcommand{\cM}{{\mathcal M}}
\newcommand{\cS}{{\mathcal S}}
\newcommand{\tQ}{{\widetilde{ Q}}}
\newcommand{\Qtil}{{\widetilde Q}}
\newcommand{\s}{\setminus}         \newcommand{\ep}{\epsilon}
\newcommand{\n}{\nabla}            \newcommand{\e}{\eta}
\newcommand{\pa}{\partial}        \newcommand{\fd}{\fallingdotseq}
    \newcommand{\ds}{\displaystyle}
 \newcommand{\pf }{\noindent{\it Proof. }}
\newcommand{\aee }{\text{\rm a.e.}} 
  \newcommand{\pv }{\text{\rm p.v.}}
  \newcommand{\dd }{\text{\rm d}}
\newcommand{\rB }{{\text{\rm B}}}   \newcommand{\rC }{{\text{\rm C}}}
\newtheorem{thm}[subsubsection]{Theorem}
\newtheorem{lemma}[subsubsection]{Lemma}
\newtheorem{cor}[subsubsection]{Corollary}
\newtheorem{remark}[subsubsection]{Remark}
\newtheorem{definition}[subsubsection]{Definition}
\numberwithin{equation}{subsection}
\title[fully nonlinear integro-differential operators]{ Regularity results for fully nonlinear integro-differential operators
with\\ nonsymmetric positive kernels }
\author{ Yong-Cheol Kim and Ki-Ahm Lee }
\begin{document}
\begin{abstract}
 In this paper, we consider fully nonlinear integro-differential
 equations with possibly nonsymmetric kernels. We are able to find
 different versions of Alexandroﬀ-Backelman-Pucci estimate
 corresponding to the full class $\cS^{\fL_0}$ of uniformly elliptic  nonlinear equations with
 $1<\sigma<2$ (subcritical case) and to their subclass $\cS^{\fL_0}_{\eta}$ with $0<\sigma\leq  1$.
 We show that $\cS^{\fL_0}_{\eta}$ still includes a large number of nonlinear operators as well as linear operators.
 And we show a Harnack inequality, H\"older regularity, and
 $C^{1,\alpha}$-regularity of the solutions by obtaining decay
 estimates of their level sets in each cases.
\end{abstract}
\thanks {2000 Mathematics Subject Classification: 47G20, 45K05,
35J60, 35B65, 35D10 (60J75)}
%\thanks The author was supported in part by a Korea University Grant. \endthanks
%\address $\bullet$ L. A. Caffarelli : Department of Mathematics,
%Univ. of Texas at Austin, TX 78712-0257, U.S.A. \endaddress \email
%caffarel$\@$math.utexas.edu \endemail
\address{$\bullet$ Yong-Cheol Kim : Department of Mathematics Education, Korea University, Seoul 136-701,
Korea }

\email{ychkim@korea.ac.kr}
%ychkim$\@$math.utexas.edu

\address{$\bullet$ Ki-Ahm Lee : Department of Mathematics, Seoul National University, Seoul 151-747,
Korea} \email{kiahm@math.snu.ac.kr}

\maketitle

\section{Introduction}\label{sec-1}

In this paper, we are going to consider the regularity of the
viscosity solutions of {\it integro-differential operators} with
possibly nonsymmetric kernel:
\begin{equation}\label{eq-2.1}
\cL u(x)=\pv \int_{\BR^n}\mu(u,x,y)K(y)\,dy
\end{equation}
where $\mu(u,x,y)=u(x+y)-u(x)-(\n u(x)\cdot y)\chi_{B_1}(y)$, which
describes  the infinitesimal generator of given purely jump
processes, i.e. processes without diffusion or drift part \cite{CS}.
We refer the detailed definitions of notations to \cite{KL}. Then we
see that $\cL u(x)$ is well-defined provided that
$u\in\rC^{1,1}(x)\cap\rB(\BR^n)$ where $\rB(\BR^n)$ denotes {\it the
family of all real-valued bounded functions defined on $\BR^n$}. If
$K$ is symmetric (i.e. $K(-y)=K(y)$), then an odd function
$\bigl[(\n u(x)\cdot y)\chi_{B_1}(y)\bigr]K(y)$ will be canceled in
the integral, and so we have that
$$\cL  u(x)=\pv\int_{\BR^n}\bigl[u(x+y)+u(x-y)-2u(x)\bigr]K(y)\,dy .$$
On the other hand, if $K$ is not symmetric, the effect of $\bigl[(\n
u(x)\cdot y)\chi_{B_1}(y)\bigr]K(y)$ persists and we can actually
observe that the influence of this gradient term becomes stronger as
we try to get an estimate in smaller regions.

Nonlinear integro-differential operators come from the  stochastic
control theory related with
\begin{equation*}
\cI u(x)=\sup_{\ap}\cL_{\ap}u(x),
\end{equation*}
or game theory associated with
\begin{equation}\label{eq-1.4}
\cI u(x)=\inf_{\bt}\sup_{\ap}\cL_{\ap\bt}u(x),
\end{equation}
when the stochastic process is of L\`evy type allowing jumps; see
\cite{S, CS, KL}. Also an operator like $\cI
u(x)=\sup_{\ap}\inf_{\bt}\cL_{\ap\bt}u(x)$ can be considered.
Characteristic properties of these operators can easily be derived
as follows;
\begin{equation}\label{eq-1.5}
\begin{split}\inf_{\ap\bt}\cL_{\ap\bt}
v(x)&\le\cI [u+v](x)-\cI u(x)\le\sup_{\ap\bt}\cL_{\ap\bt} v(x).
\end{split}
\end{equation}

\subsection{operators}
In this section, we introduce a class of operators. All notations and
the concepts of viscosity solution follows \cite{KL} where a more
general class of operators has been considered.  Similar concepts can
be found at \cite{CS} for
symmetric kernel.

For our purpose, we shall restrict our attention to the operators
$\cL$ where the measure $\fm$ is given by a positive kernel $K$
which is not necessarily symmetric. That is to say, the operators
$\cL$ are given by
\begin{equation}\label{eq-2.1}
\cL u(x)=\pv \int_{\BR^n}\mu(u,x,y)K(y)\,dy
\end{equation}
where $\mu(u,x,y)=u(x+y)-u(x)-(\n u(x)\cdot
y)\chi_{B_1}(y)$.

And we consider the class $\fL_0$ of the operators $\cL$ associated
with the measures $\fm$ given by positive kernels $K\in\cK_0$
satisfying that
\begin{equation}\label{eq-2.3}(2-\sm)\f{\ld}{|y|^{n+\sm}}\le
K(y)\le(2-\sm)\f{\Ld}{|y|^{n+\sm}},\,\,0<\sm<2.
\end{equation}
The maximal operator and the minimal operator with respect to a
class $\fL$ of linear integro-differential operators are defined by
\begin{equation}\label{eq-3.1}\cM^+_{\fL}u(x)=\sup_{\cL\in\fL}\cL u(x)\,\,\text{ and }\,\,\cM^-_{\fL}u(x)=\inf_{\cL\in\fL}\cL u(x).
\end{equation}

We say that $P$ is a {\it paraboloid of opening $M$} if
$P(x)=\el_0+\el(x)\pm\f{M}{2}\,|x|^2$ where $M$ is a positive
constant, $\el_0$ is real constant and $\el$ is a linear function.
Then $P$ is called {\it convex} when we have $+$ in the above and
{\it concave} when we have $-$ in the above. Let $\Om\subset\BR^n$
be a bounded domain. Given two semicontinuous functions $u,v$
defined on an open subset $U\subset\Om$ and a point $x_0\in U$, we
say that {\it $v$ touches $u$ by above at $x_0\in U$} if
$u(x_0)=v(x_0)$ and $u(x)\le v(x)$ for any $x\in U$. Similarly, we
say that {\it $v$ touches $u$ by below at $x_0\in U$} if
$u(x_0)=v(x_0)$ and $u(x)\ge v(x)$ for any $x\in U$. For a
semicontinuous function $u$ on $\Om$ and an open subset $U$ of
$\Om$, we define $\Theta^+ (u,U)(x_0)$ to be the infimum of all
positive constants $M$ for which there is a convex paraboloid of
opening $M$ that touches $u$ by above at $x_0\in U$. Also we define
$\Theta^+ (u,U)(x_0)=\iy$ if no such constant $M$ exists. Similarly,
we define $\Theta^- (u,U)(x_0)$ to be the infimum of all positive
constants $M$ for which there is a concave paraboloid of opening $M$
that touches $u$ by below at $x_0\in U$, and also we define
$\Theta^- (u,U)(x_0)=\iy$ if no such constant $M$ exists. Finally we
set $\Theta(u,U)(x_0)=\max\{\Theta^+ (u,U)(x_0),\Theta^-
(u,U)(x_0)\}\le\iy.$ For these definitions, the readers can refer to
\cite{CC}.
\begin{definition}\label{def-2.1}
Let $\Om\subset\BR^n$ be a bounded domain and let $u:\Om\to\BR$ be a
semicontinuous function. Given $x_0\in\Om$, we say that $u$ is
$\rC_{\pm}^{1,1}$ {\rm at $x_0$} $($ resp. $\rC^{1,1}$ {\rm at
$x_0$} $)$ if $\,\Theta^{\pm} (u,U)(x_0)<\iy$ $($ resp.
$\Theta(u,U)(x_0)<\iy$ $)$ for an open neighborhood $U$ of $x_0$ and
we write $u\in\rC^{1,1}[x_0]$ if $\,\Theta(u,U)(x_0)<\iy$. Given a
fixed $\ep\in (0,1)$, we set $\Theta(u,\ep)(x)=\Theta(u,\Om\cap
B_{\ep}(x))$ for $x\in\Om$ and we write $u\in\rC^{1,1}[\Om]$ if
$\sup_{x\in\Om}\Theta(u,\ep)(x)\fd\Theta_{\Om}[u]<\iy$.
\end{definition}
\begin{remark}\label{rem-2.2}
If $u\in\rC^{1,1}[x_0]$ then $u$ is differentiable at $x_0$ because
$u$ lies between two tangent paraboloids in an open neighborhood of
$x_0$.
\end{remark}
\begin{definition}\label{def-2.3}
A function $u:\BR^n\to\BR$ is said to be $\text{\rm $\rC^{1,1}$ at
$x\in\BR^n$}$ $($we write $u\in\rC^{1,1}(x)$$)$, if there are a
vector $v\in\BR^n$, $r_0>0$ and $M>0$ such that
\begin{equation}\label{eq-2.4}
\bigl|u(x+y)-u(x)-v\cdot y\bigr|\le M\,|y|^2\,\,\,\text{ for any
$y\in B_{r_0}$.}
\end{equation}
We write $u\in\rC^{1,1}(U)$ if $\,u\in\rC^{1,1}(x)$ for any $x\in U$
and the constant $M$ in \eqref{eq-2.4} is independent of $\,x$,
where $U$ is an open subset of $\BR^n$.
\end{definition}
\begin{remark}\label{rem-2.5}
\begin{enumerate}[$(a)$]
\item  Such vector $v$ exists uniquely and moreover $v=\n u(x)$.
\item If $\,u\in\rC^{1,1}[x]$ for $x\in\Om$, then we see that
$u\in\rC^{1,1}(x)$. Moreover, it is easy to show that the converse
holds. Thus we have $\rC^{1,1}[\Om]=\rC^{1,1}(\Om)$.
\end{enumerate}
\end{remark}

For $x\in\Om$ and a function $u:\BR^n\to\BR$ which is semicontinuous
on $\overline\Om$, we say that $\vp$ belongs to the function class
$\rC^2_{\Om}(u;x)^+$ (resp. $\rC^2_{\Om}(u;x)^-$) and we write
$\vp\in\rC^2_{\Om}(u;x)^+$ (resp. $\vp\in\rC^2_{\Om}(u;x)^-$) if
there are an open neighborhood $U\subset\Om$ of $x$ and
$\vp\in\rC^2(U)$ such that $\vp(x)=u(x)$ and $\vp>u$ (resp. $\vp<u$)
on $U\s\{x\}$. We note that geometrically $u-\vp$ having a local
maximum at $x$ in $\Om$ is equivalent to $\vp\in\rC^2_{\Om}(u;x)^+$
and $u-\vp$ having a local minimum at $x$ in $\Om$ is equivalent to
$\vp\in\rC^2_{\Om}(u;x)^-$. For $x\in\Om$ and $\vp\in
\rC^2_{\Om}(u;x)^{\pm}$, we write
$$\mu(u,x,y;\n\vp)=u(x+y)-u(x)-(\n\vp(x)\cdot y)\chi_{B_1}(y),$$
and the expression for $\cL_{\ap\bt}\,u(x;\n\vp)$ and $\cI
u(x;\n\vp)$ may be written as
\begin{equation*}\begin{split}\cL_{\ap\bt}\,u(x;\n\vp)&=\int_{\BR^n}\mu(u,x,y;\n\vp)K_{\ap\bt}(y)\,dy,\\
\cI
u(x;\n\vp)&=\inf_{\bt}\sup_{\ap}\cL_{\ap\bt}\,u(x;\n\vp),\end{split}\end{equation*}
where $K_{\ap\bt}\in\cK_0$. Then we see that
$\cM^-_{\fL_0}u(x;\n\vp)\le\cI
u(x;\n\vp)\le\cM^+_{\fL_0}u(x;\n\vp)$, and $\cM^+_{\fL_0}u(x;\n\vp)$
and $\cM^-_{\fL_0}u(x;\n\vp)$ have the following simple forms;
\begin{equation}\label{eq-3.2}
\begin{split}
&\cM^+_{\fL_0}u(x;\n\vp)=(2-\sm)\int_{\BR^n}\f{\Ld\mu^+(u,x,y;\n\vp)
-\ld\mu^-(u,x,y;\n\vp)}{|y|^{n+\sm}}\,dy,\\
&\cM^-_{\fL_0}u(x;\n\vp)=(2-\sm)\int_{\BR^n}\f{\ld\mu^+(u,x,y;\n\vp)
-\Ld\mu^-(u,x,y;\n\vp)}{|y|^{n+\sm}}\,dy,
\end{split}
\end{equation}
where $\mu^+$ and $\mu^-$ are given by
\begin{equation*}\begin{split}\mu^{\pm}(u,x,y;\n\vp)&=\max\{\pm
    \mu(u,x,y;\n\vp),0\}.
\end{split}\end{equation*} We note
if $u\in\rC^{1,1}[x]$, then $\cI u(x;\n\vp)=\cI u(x)$ and
$\cM^{\pm}_{\fL_0}u(x;\n\vp)=\cM^{\pm}_{\fL_0}u(x)$. We shall use
these maximal and minimal operators to obtain regularity estimates.

Let $K(x)=\sup_{\ap}K_{\ap}(x)$ where $K_{\ap}$'s are all the
kernels of all operators in a class $\fL$. For any class $\fL$, we
shall assume that
\begin{equation}\label{eq-3.3}
\int_{\BR^n}(|y|^2\wedge 1)\,K(y)\,dy<\iy.
\end{equation}
 Using
the extremal operators, we provide a general definition of
ellipticity for nonlocal equations. The following is a kind of
operators of which the regularity result shall be obtained in this
paper.

\begin{definition}\label{def-3.1} Let $\fL$ be a class of linear
integro-differential operators. Assume that \eqref{eq-3.3} holds for $\fL$.
Then we say that an operator $\cJ$ is $\text{\rm elliptic with
respect to $\fL$}$, if it satisfies the following properties:

$(a)$ $\cJ u(x)$ is well-defined for any $u\in\rC^{1,1}[x]\cap
\rB(\BR^n)$.

$(b)$ If $\,u\in\rC^{1,1}[\Om]\cap\rB(\BR^n)$ for an open
$\Om\subset\BR^n$, then $\cJ u$ is continuous on $\Om$.

$(c)$ If $\,u,v\in\rC^{1,1}[x]\cap\rB(\BR^n)$, then we have that
\begin{equation}\label{eq-3.4}
\cM^-_{\fL}[u-v](x)\le\cJ u(x)-\cJ
v(x)\le\cM^+_{\fL}[u-v](x).
\end{equation}
And We denote by $\cS^{\fL}$ the class of integro-differential
operators which is elliptic with respect to $\fL$.
\end{definition}

The concept of viscosity solutions, its comparison principle and
stability properties can be found in \cite{CS} for symmetric kernels
and in \cite{KL} for possibly nonsymmetric kernels. Kim and Lee
\cite{KL} considered much larger class of operators but prove the
regularity of viscosity solutions only for $1<\sigma<2$.

Now we are going to consider a subclass $\cS^{\fL}_{\eta}$ of
$\cS^{\fL}$ where the drift effect created by the nonsymmetric
kernel is controllable. For $x\in B_R$ and
$\vp\in\rC^2_{B_R}(u;x)^{\pm}$, we set
$$\mu_R(u,x,y;\n\vp) =u(x+y)-u(x)-(\n\vp(x)\cdot y)\,\chi_{B_R}(y).$$
For $u\in\rC^{1,1}[x]$, we write $\mu_R(u,x,y)=\mu_R(u,x,y;\n u)$.
Then we define $\mu^{\pm}_R$ and $\cM^{\pm}_{\fL_0,R} u(x;\n\vp)$ by
replacing $\mu$ by $\mu_R$ in the definition $\cM^{\pm}_{\fL_0}
u(x;\n\vp)$. We note if $u\in\rC^{1,1}[x]$, then
$\cM^{\pm}_{\fL_0,R}u(x;\n\vp)=\cM^{\pm}_{\fL_0,R}u(x)\fd\pv\int_{\BR^n}\mu_R(u,x,y)K(y)\,dy$.

\begin{definition}\label{def-S-eta}
Let $0<\eta\leq 1$ and  $\cI\in\cS^{\fL}$, where $\fL$ is a class of
linear integro-differential operators. Then we say that $\cI\in
\cS^{\fL}_{\eta}$ if, for $R\in(0,1]$, there are $\cB^{\pm}_R:
\R^n\rightarrow \R$ such that
\begin{enumerate}
\item $\cB^{\pm}_R(\cdot)$ is homogeneous of degree one, i.e. $\cB^{\pm}_R(0)=0$ and $\cB^{\pm}_R(\n u)=\cB^+_R\left( \f{\n u}{|\n u|}\right)|\n u|$
for $|\n u|\neq 0$,
\item $|\{\nu\in S^{n-1}:\, \cB_R^{\pm}(\nu)<0\}|\geq \eta|S^{n-1}|>0$,
\item $\cM_{\fL,R,\e}^- u(x)\le \cI u(x)-\cI 0 (x)\le \cM^+_{\fL,R,\e}u(x)$ whenever $u\in\rC^{1,1}[x]\cap\rB(\BR^n)$ for $x\in
B_R$,
\end{enumerate}
\item
where
$\cM_{\fL,R,\eta}^{\pm}u(x):=\cM_{\fL,R}^{\pm}u(x)\pm\cB^{\pm}_R\left(
\n u(x)\right)\pm(2-\sigma)CR^{1-\sigma}|\n u(x)|.$
\begin{definition} Let $\cL\in\fL$ be a linear integro-differential operator with a kernel $K$. For $0<R<1$, the drift vector $\fb_{\cL,R}   $
of $\cL$ at $R$ is defined by
$$\fb_{\cL,R}= (2-\sm)\int_{B_1\s B_R}y\,K(y)\,dy.$$
\end{definition}
\end{definition}

\begin{lemma}\label{lem-subclass-1}
Let $0<\sigma<2$ and $0<R<1$. Let $\fL$ be a class of linear
integro-differential operators. Then we have the following results:
\begin{enumerate}
\item If $\cL$ is a linear integro-differential operator which is in $\fL$, then $\cL\in\cS^{\fL}_{\eta}$ for $0<\eta\leq\frac12.$
\item  Let $\cL_i$ be  a linear  integro-differential operator with a kernel $K_i$  for $i=1,\cdots,N$.
Let $\fb_{\cL_i,R}$ be the drift vectors of $\cL_i$.
        Assume that there is a unit vector $\fa$ such that
       for any nonzero drift vector $\fb_{\cL_i,R}$, $$\left\langle \fa,\frac{\fb_{\cL_i,R}}{| \fb_{\cL_i,R}|}\right\rangle >0.$$
       If $\cI\in\cS^{\fL}$ satisfies that $$ \min _{i=1,\cdots,N}\cL_i u(x) \leq \cI u(x)-\cI 0(x)\leq\max _{i=1,\cdots,N}\cL_i
         u(x)$$ whenever $u\in\rC^{1,1}[x]\cap\rB(\BR^n)$ for $x\in
         B_R$, then $\cI\in \cS^{\fL}_{\eta}$ for some $\eta>0$.
\item  Let $\cL_{\alpha}$ be a linear integro-differential operator with a kernel $K_{\alpha}$, $\alpha\in I$.
Let $\fb_{\cL_{\alpha},R}$ be the drift vectors of $\cL_{\alpha}$.
        Assume that there is a vector $\fa\in S^{n-1}$ and $\eta>0$ such that
       for any nonzero drift vectors $\fb_{\cL_{\alpha},R}$ , $$\left\langle \fa,\frac{\fb_{\cL_{\alpha},R}}{| \fb_{\cL_{\alpha},R}|}\right\rangle
       \geq \eta^{\frac{1}{n-1}}\quad\text{for any $\alpha\in I$}.$$
       If $\cI\in\cS^{\fL}$ satisfies that $$ \min _{\alpha\in I}\cL_{\alpha}u(x)  \leq \cI u(x)-\cI 0(x)  \leq  \max _{\alpha\in
         I}\cL_{\alpha}u(x)$$ whenever $u\in\rC^{1,1}[x]\cap\rB(\BR^n)$ for $x\in
         B_R$, then we have $\cI\in \cS^{\fL}_{\eta}$  .
\end{enumerate}
\end{lemma}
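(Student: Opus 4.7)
The plan is to verify Definition \ref{def-S-eta} in each case by exhibiting explicit $\cB^{\pm}_R$ built directly from the drift vectors. The workhorse identity, valid for any linear $\cL \in \fL$ with kernel $K$ and any $R \in (0,1)$, is
\begin{equation*}
\cL u(x) \,=\, \pv\!\int_{\BR^n}\mu_R(u,x,y)K(y)\,dy \,-\, \tfrac{1}{2-\sigma}\,\nabla u(x)\cdot \fb_{\cL,R},
\end{equation*}
obtained by writing $\mu - \mu_R = -(\nabla u\cdot y)\chi_{B_1\setminus B_R}$ and invoking the definition of $\fb_{\cL,R}$. Since $\cL \in \fL$, the first term is bounded above by $\cM^+_{\fL,R}u(x)$ and below by $\cM^-_{\fL,R}u(x)$, so the task reduces to choosing $\cB^{\pm}_R$ that absorb the drift $\mp\tfrac{1}{2-\sigma}\nabla u\cdot\fb$ while keeping $\{\nu\in S^{n-1} : \cB^{\pm}_R(\nu)<0\}$ of measure at least $\eta|S^{n-1}|$.

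For part (1), I take the linear forms $\cB^{\pm}_R(v) = \mp\tfrac{1}{2-\sigma}\,v\cdot\fb_{\cL,R}$: they are degree-one homogeneous, condition (3) of Definition \ref{def-S-eta} follows directly from the identity, and when $\fb_{\cL,R}\ne 0$ the negative set is an open hemisphere, producing $\eta = 1/2$. The degenerate case $\fb_{\cL,R}=0$ is handled by adding a small fixed linear perturbation whose error is absorbed by $(2-\sigma)CR^{1-\sigma}|\nabla u|$ after enlarging $C$. For parts (2) and (3), I set
\begin{equation*}
\cB^+_R(v) = \sup_{\alpha \in I}\bigl[-\tfrac{1}{2-\sigma}\,v\cdot\fb_{\cL_\alpha,R}\bigr],\qquad
\cB^-_R(v) = \sup_{\alpha \in I}\bigl[\tfrac{1}{2-\sigma}\,v\cdot\fb_{\cL_\alpha,R}\bigr],
\end{equation*}
positively homogeneous of degree one as suprema of linear functionals. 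Applying the identity termwise and using $\sup_\alpha(a_\alpha+b_\alpha)\le \sup_\alpha a_\alpha + \sup_\alpha b_\alpha$ together with the extremal bound on the first term, the hypothesis $\min_\alpha\cL_\alpha u\le \cI u - \cI 0\le \max_\alpha\cL_\alpha u$ yields Definition \ref{def-S-eta}(3).

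What remains is the measure condition, which is the geometric heart of the argument. Observe that $\cB^+_R(\nu)<0$ holds precisely on $E := \{\nu\in S^{n-1} : \nu\cdot\fb_{\cL_\alpha,R}>0 \text{ for all } \alpha\}$, an open spherical polytope, and the analogous statement with signs flipped holds for $\cB^-_R$. For part (2), finiteness of the index set together with $\min_i \langle\fa,\fb_{\cL_i,R}/|\fb_{\cL_i,R}|\rangle>0$ (over nonzero drifts) forces $E$ to contain an open cap about $\fa$, giving some $\eta>0$. For part (3), the quantitative hypothesis $\langle\fa,\fb_{\cL_\alpha,R}/|\fb_{\cL_\alpha,R}|\rangle \ge \eta^{1/(n-1)}$ places every drift direction in the cap of half-angle $\theta_0=\arccos(\eta^{1/(n-1)})$ about $\fa$, and the spherical triangle inequality then shows that any $\nu$ at angle less than $\arcsin(\eta^{1/(n-1)})=\pi/2-\theta_0$ from $\fa$ satisfies $\nu\cdot \fb_{\cL_\alpha,R}/|\fb_{\cL_\alpha,R}| \ge \cos(\theta_0+\arcsin(\eta^{1/(n-1)}))>0$ for every $\alpha$, hence lies in $E$. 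Vanishing drifts are handled by the same small-perturbation trick used in part (1).

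The main technical obstacle is verifying that the cap $\{\nu : \langle\nu,\fa\rangle>\sqrt{1-\eta^{2/(n-1)}}\}$ has measure at least $\eta|S^{n-1}|$. This uses the cap formula $|C_\phi|/|S^{n-1}| = (|S^{n-2}|/|S^{n-1}|)\!\int_0^\phi \sin^{n-2}\theta\,d\theta$ and the elementary estimate $\int_0^\phi \sin^{n-2}\theta\,d\theta \ge \sin^{n-1}\phi/(n-1)$. The exponent $1/(n-1)$ in the hypothesis is pinned down by the fact that a cap of half-angle $\phi$ on $S^{n-1}$ has measure comparable to $\phi^{n-1}$ for small $\phi$, so the choice $\sin\phi = \eta^{1/(n-1)}$ is exactly the scaling that recovers the fraction $\eta$ up to dimensional constants, which can be absorbed into the constant $C$ in the definition of $\cM^{\pm}_{\fL,R,\eta}$.
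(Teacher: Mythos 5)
Your argument follows the same route as the paper: decompose each $\cL_\alpha u(x)$ into the truncated integral $\int\mu_R(u,x,y)K_\alpha(y)\,dy$ plus the drift contribution $-\tfrac{1}{2-\sigma}\nabla u(x)\cdot\fb_{\cL_\alpha,R}$, take $\cB^\pm_R$ to be the appropriate sup of the resulting linear functionals, and observe that $\{\nu\in S^{n-1}:\cB^+_R(\nu)<0\}$ is the intersection of the open hemispheres $\{\nu\cdot\fb_{\cL_\alpha,R}>0\}$, hence contains a spherical cap about $\fa$. You are more careful with signs than the paper (whose displayed identity $\cL u=(2-\sigma)\int\mu_R K+(2-\sigma)\int_{B_1\setminus B_R}(y\cdot\nabla u)K$ carries the drift term with the wrong sign; harmless, since only $|\{\cB^\pm_R\le 0\}|$ matters and this is insensitive to replacing $\fb$ by $-\fb$), and your cap calculation supplies a measure estimate the paper merely asserts.

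One caveat: the closing remark that the dimensional constant from the cap estimate ``can be absorbed into the constant $C$ in the definition of $\cM^\pm_{\fL,R,\eta}$'' does not hold up. That $C$ multiplies the $(2-\sigma)R^{1-\sigma}|\nabla u|$ term in condition (3) of Definition~\ref{def-S-eta} and has no bearing on the measure condition (2). What your computation actually gives is $|\{\cB^+_R<0\}|\ge\tfrac{|S^{n-2}|}{(n-1)|S^{n-1}|}\,\eta\,|S^{n-1}|$, with the dimensional prefactor strictly less than one in general. This suffices for part (2), where the conclusion is only ``for some $\eta>0$''; for part (3) it falls a dimensional factor short of the asserted $\ge\eta|S^{n-1}|$. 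The paper's proof states the measure bound without computation, so this imprecision is inherited rather than introduced, but the proposed repair is not a repair. Separately, the paper in practice reads condition (2) of Definition~\ref{def-S-eta} with $\le$ rather than $<$ (compare the proof of part (1) here and of Lemma~\ref{lem-subclass-2}(1), where $\cB^\pm_R\equiv 0$ is accepted with $\eta=1$); under that reading, kernels with vanishing drift are handled trivially, so your small-perturbation device, while valid, is unnecessary.
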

\begin{proof} (1) Take $u\in\rC^{1,1}[x]\cap\rB(\BR^n)$ for $x\in
B_R$. Then we have that
\begin{equation*}
\begin{split}
\cL u(x)&=(2-\sm)\int_{\BR^n}\mu(u,x,y)K(y)\,dy.\\
&=(2-\sm)\int_{\BR^n}\mu_R(u,x,y)K(y)\,dy+(2-\sm)\int_{B_1\backslash B_R}(y\cdot\n u(x) )K(y)\,dy.\\
&\leq  \cM^+_{\fL,R}u(x)+\cB^+_R\left(\n u(x)\right)
\end{split}
\end{equation*}
for $\cB^+_R\left(\fa\right)=\fb_{\cL,R}\cdot\fa$. And  if
$\fb_{\cL,R}\neq 0$, $|\{\fa\in S^{n-1}:\cB^+_R\left(\fa\right)\leq
0\}|=\frac12 |S^{n-1}|$,\, which means $0<\eta\leq\frac12$;
otherwise, $\eta=1$.

(2) Since there is a finite number of unit vectors
$\frac{\fb_{\cL_i,R}}{| \fb_{\cL_i,R}|}$, we can find $\eta>0$ such
that
$$\left\langle \nu,\frac{\fb_{\cL_{i},R}}{| \fb_{\cL_{i},R}|}\right\rangle \geq \eta^{\frac{1}{n-1}}\quad\text{ for $i=1,\cdots,n$ and $\fb_{\cL_{i},R}\neq 0$ }.$$
Take $u\in\rC^{1,1}[x]\cap\rB(\BR^n)$ for $x\in B_R$. Then we have
that
\begin{equation*}
\begin{split}
\max _{i=1,\cdots,N}\cL_i u(x)&=(2-\sm)\max _{i=1,\cdots,N}\int_{\BR^n}\mu(u,x,y)K_i(y)\,dy.\\
&=(2-\sm)\max _{i=1,\cdots,N}\int_{\BR^n}\mu_R(u,x,y)K_i(y)\,dy\\
 &\quad+(2-\sm)\max _{i=1,\cdots,N}\int_{B_1\backslash B_R}(y\cdot\n u(x)) K_i(y)\,dy.\\
&\leq  \cM^+_{\fL,R}u(x)+\cB^+_R\left(\n u(x)\right)
\end{split}
\end{equation*}
for $\cB^+_R\left(\fa\right)=\max
_{i=1,\cdots,N}(\fb_{\cL_i,R}\cdot\fa)$. Also we have that
$|\{\fa:\cB^+_R\left(\fa\right)\leq 0\}|\geq\eta |S^{n-1}|$.

(3) It can be obtained from the same argument as (2).
\end{proof}
\begin{lemma}\label{lem-subclass-2}
\item
\begin{enumerate} \item Let $0<\sm<2$ and let $\fL$ be a class of linear
integro-differential operators. If $\cL\in \cS^{\fL}$ with a
symmetric kernel $K$, then $\cL\in\cS^{\fL}_{\eta}$ for some
$\e\in(0,1]$.
          In addition, if $\cL_{\alpha}$ has symmetric kernel for all $\alpha$,
          then $\sup_{\alpha}\cL_{\alpha}, \inf_{\alpha}\cL_{\alpha}\in S^{\fL}_{\eta}$ for $1\geq\eta>0$.
\item If $1<\sigma<2$, then $\cS^{\fL_0}=\cS^{\fL_0}_{\eta}$ for some $\e\in(0,1]$.
\end{enumerate}
\end{lemma}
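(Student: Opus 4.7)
\textbf{Proof plan for Lemma \ref{lem-subclass-2}.}

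For part (1), the plan is to exploit the symmetry of $K$ to make the drift $\fb_{\cL,R}$ vanish identically, reducing the problem to a pointwise comparison of kernels. Since $K(-y)=K(y)$, the integrand $yK(y)$ is odd, so $\fb_{\cL,R}=(2-\sm)\int_{B_1\s B_R}yK(y)\,dy=0$, and therefore
\[
\cL u(x)=(2-\sm)\int_{\BR^n}\mu(u,x,y)K(y)\,dy=(2-\sm)\int_{\BR^n}\mu_R(u,x,y)K(y)\,dy.
\]
I would then split according to the sign of $\mu_R$ and use the bounds $(2-\sm)\ld/|y|^{n+\sm}\le K(y)\le(2-\sm)\Ld/|y|^{n+\sm}$ to obtain the pointwise sandwich
\[
(2-\sm)\frac{\ld\mu_R^+-\Ld\mu_R^-}{|y|^{n+\sm}}\le\mu_R(u,x,y)K(y)\le(2-\sm)\frac{\Ld\mu_R^+-\ld\mu_R^-}{|y|^{n+\sm}};
\]
integrating gives $\cM^-_{\fL_0,R}u(x)\le\cL u(x)\le\cM^+_{\fL_0,R}u(x)$. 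To satisfy condition (2) of Definition \ref{def-S-eta}, I would take $\cB^{\pm}_R(\xi):=-(2-\sm)CR^{1-\sm}|\xi|$, which is homogeneous of degree one, strictly negative on all of $S^{n-1}$ (so $\eta=1$ works), and which cancels the $(2-\sm)CR^{1-\sm}|\n u|$ term exactly, so that $\cM^{\pm}_{\fL_0,R,\eta}u=\cM^{\pm}_{\fL_0,R}u$. For the statements about $\sup_{\ap}\cL_{\ap}$ and $\inf_{\ap}\cL_{\ap}$, the same pointwise bound applies to each $\cL_{\ap}$ separately (using symmetry of each $K_{\ap}$), and taking the sup (resp. inf) preserves both inequalities.

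For part (2), the plan is to estimate $\cM^{\pm}_{\fL_0}u-\cM^{\pm}_{\fL_0,R}u$ and absorb the difference into the $(2-\sm)CR^{1-\sm}|\n u|$ correction by means of the hypothesis $\sm>1$. Since $\mu(u,x,y)-\mu_R(u,x,y)=-(\n u(x)\cdot y)\chi_{B_1\s B_R}(y)$ and the map $t\mapsto\Ld t^+-\ld t^-$ is Lipschitz with constant $\Ld$, one gets
\[
\bigl|\cM^{\pm}_{\fL_0}u(x)-\cM^{\pm}_{\fL_0,R}u(x)\bigr|\le(2-\sm)\Ld|\n u(x)|\int_{B_1\s B_R}\frac{|y|}{|y|^{n+\sm}}\,dy=(2-\sm)\Ld|\pa B_1|\cdot|\n u(x)|\int_R^1r^{-\sm}\,dr.
\]
Invoking $\sm>1$ gives $\int_R^1r^{-\sm}\,dr=(R^{1-\sm}-1)/(\sm-1)\le R^{1-\sm}/(\sm-1)$, so $|\cM^{\pm}_{\fL_0}u-\cM^{\pm}_{\fL_0,R}u|\le C_0(2-\sm)R^{1-\sm}|\n u|$ for some $C_0=C_0(n,\ld,\Ld,\sm)$. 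With the same choice $\cB^{\pm}_R(\xi):=-(2-\sm)R^{1-\sm}|\xi|$ as in (1) but now with $C$ large enough that $C\ge C_0+1$, one has $\cM^+_{\fL_0,R,\eta}u=\cM^+_{\fL_0,R}u+(2-\sm)(C-1)R^{1-\sm}|\n u|\ge\cM^+_{\fL_0}u\ge\cI u-\cI 0$ by the ellipticity of $\cI$ and the above difference bound, and symmetrically for the lower bound. This establishes $\cI\in\cS^{\fL_0}_{\eta}$ with $\eta=1$; the reverse inclusion $\cS^{\fL_0}_{\eta}\subset\cS^{\fL_0}$ is immediate from the definition of $\cS^{\fL_0}_{\eta}$.

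The main obstacle is the sharpness of the assumption $\sm>1$ in part (2): the integral $\int_R^1r^{-\sm}\,dr$ scales as $R^{1-\sm}$ only when $\sm>1$, being logarithmic at $\sm=1$ and staying bounded while $R^{1-\sm}\to 0$ for $\sm<1$. Thus the $(2-\sm)CR^{1-\sm}|\n u|$ correction would be genuinely too small to absorb an arbitrary drift in the non-symmetric case when $\sm\le 1$. This is precisely why (1) relies on symmetry to kill the drift at the source and why the subclass $\cS^{\fL_0}_{\eta}$ must be introduced as a strict refinement of $\cS^{\fL_0}$ in the regime $0<\sm\le 1$.
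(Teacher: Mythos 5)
Your proof is correct and follows essentially the same route as the paper's: use symmetry to kill the drift integral in part~(1), and use the convergence of $\int_R^1 r^{-\sm}\,dr \lesssim R^{1-\sm}$ for $\sm>1$ to absorb the gradient tail in part~(2). The one place you diverge is your choice of $\cB^{\pm}_R$: you take $\cB^{\pm}_R(\xi)=-(2-\sm)CR^{1-\sm}|\xi|$, which is strictly negative on $S^{n-1}$ and cancels the built-in $(2-\sm)CR^{1-\sm}|\n u|$ term, whereas the paper simply sets $\cB^{\pm}_R\equiv 0$. Your choice is actually cleaner with respect to the letter of Definition~\ref{def-S-eta}, since condition~(2) there is written with strict inequality $\cB^{\pm}_R(\nu)<0$, which $\cB^{\pm}_R\equiv 0$ does not satisfy; the paper implicitly reads the condition with $\leq$ (as in the set $\cC^{\pm}_\eta$ used in the A-B-P Lemma~\ref{lem-6.7}), so both are fine in spirit, but your version avoids the technicality entirely. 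The Lipschitz estimate you use for the difference $\cM^{\pm}_{\fL_0}u-\cM^{\pm}_{\fL_0,R}u$ is equivalent to the paper's bound via the vector $b_R(x)$, just packaged more directly. No gaps.
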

\begin{proof}
(1) If $K(y)$ is symmetric i.e. $K(-y)=K(y)$, then $\fb_{\cL,R}=0$.
If $u\in\rC^{1,1}[x]\cap\rB(\BR^n)$ for $x\in B_R$, then
$\int_{\BR^n}\mu(u,x,y)K(y)\,dy=\int_{\BR^n}\mu_R(u,x,y;\n
u)K(y)\,dy$ for all $R>0$, which implies the conclusion with
$\cB^\pm_R\left(\n u\right)=0$.

(2) By the definition, $\cS^{\fL_0}_{\eta}\subset\cS^{\fL_0}$.  Take
any $\cI\in\cS^{\fL_0}$ and  $u\in\rC^{1,1}[x]\cap\rB(\BR^n)$ for
$x\in B_R$. We set
$$b_R(x)=(2-\sigma)\int_{B_1\backslash B_R}\f{y(\Ld\chi_{\mu>0}+\ld\chi_{\mu\leq 0})}{|y|^{n+\sm}}\,dy.$$
Then we easily obtain that, for $1<\sigma<2$,
$$|b_R(x)|\leq (2-\sigma)C R^{1-\sigma}.$$
Therefore we have that
\begin{equation*}\begin{split}
\cI u(x)-\cI 0(x)&\le \cM^+_{\fL_0} u(x)=\cM^+_{\fL_0,R}
u(x)+(2-\sm)\int_{B_1\backslash B_R}\f{\Ld(\mu^+-\mu_R^+)
-\ld(\mu^- - \mu_R^-)}{|y|^{n+\sm}}\,dy\\
&\leq \cM^+_{\fL_0,R} u(x)-b_R(x)\cdot \n u(x)\\
&\leq \cM^+_{\fL_0,R} u(x)+(2-\sigma)C R^{1-\sigma}|\n u(x)|.\\
\end{split}\end{equation*}
The lower bound can be obtained similarly. Therefore
$\cI\in\cS^{\fL_0}_{\eta}$ for $0<\eta\leq 1$.
\end{proof}
\subsection{Main equation}
The natural Dirichlet problem for such a nonlocal
operator $\cI$. Let $\Om$ be an open domain in $\BR^n$. Given a
function $g$ defined on $\BR^n\s\Om$, we want to find a function
$u$ such that
$$\begin{cases}\cI u(x)=0 &\text{ for any $x\in\Om$,}\\u(x)=g(x) &\text{
for $x\in\BR^n\s\Om$.}\end{cases}$$ Note that the boundary condition
is given not only on $\pa\Om$ but also on the whole complement of
$\Om$. This is because of the nonlocal character of the operator
$\cI$. From the stochastic point of view, it corresponds to the
fact that a discontinuous L\`evy process can exit the domain $\Om$
for the first time jumping to any point in $\BR^n\s\Om$.

In this paper, we shall concentrate mainly upon the regularity
properties of viscosity solutions to an equation $\cI u(x)=0$. We
shall briefly give a very general comparison principle from which
existence of the solutions can be obtained in smooth domains. Since
kernels of integro-differential operators are comparable to the
kernel of the fractional Laplace operator $-(-\Delta)^{\sm/2}$, the
theory we want to develop can be understood as a theory of viscosity
solutions for fully nonlinear operators of fractional order.

The differences between local and nonlocal operators have been
discussed at \cite{KL}.
\subsection{Known results and Key Observations}

There are some known results about Harnack inequalities and H\"older
estimates for integro-differential operators with positive symmetric
kernels (see \cite{J} for analytical proofs and \cite{BBC},
\cite{BK1}, \cite{BK2},\cite{BL}, \cite{KS}, \cite{SV} for
probabilistic proofs).
 The estimates in all these previous results
blow up as the index $\sm$ of the operator approaches $2$. In this
respect, they do not generalize to elliptic partial differential
equations. However there is some known result on regularity results
for fully nonlinear integro-differential equations associated with
nonlinear integro-differential operators with positive symmetric
kernels which remain uniform as the index $\sm$ of the operator
approaches $2$ (see \cite{CS}). Therefore these results make the
theory of integro-differential operators and elliptic differential
operators become somewhat unified. For nonlinear
integro-differential operators with possibly nonsymmetric kernels,
the authors introduced larger class of operators and proved Harnack
inequalities and H\"older estimates when $1<\sigma<2$ (see
\cite{KL}).

In this paper, we  are going to consider  nonlinear
integro-differential operators with possibly nonsymmetric kernels,
when $0<\sigma<2$.

Throughout this paper we would like to briefly present the
necessary definitions and then prove some regularity estimates.
Our results in this paper are

$\bullet$ A nonlocal version of the Alexandroff-Backelman-Pucci
estimate for fully nonlinear integro-differential equations.

$\bullet$ A Harnack inequality, H\"older regularity and an interior
$\rC^{1,\ap}$-regularity result for certain fully nonlinear
integro-differential equations.

Key observations are the following:

$\bullet$  For the nonsymmetric case, $K(y)$ and $K(-y)$ can be
chosen any of $\ld/|y|^{n+\sm}$ or
  $\Ld/|y|^{n+\sm}$. Therefore there could be an extra
  term $\ds\int_{\BR^n}\frac{\left|(\n u(x)\cdot y)\chi_{B_1}(y)\right|}{|y|^{n+\sigma}}dy$.

$\bullet$ The equation  is not scaling invariant due to
$|\chi_{B_1}(y)|$.

$\bullet$ Somehow the equation has a drift term, not only the diffusion term.
                The case $1<\sm<2$ and the case $0<\sm\le 1$ require different technique
                 due to the difference of the blow rate as $|y|$ approaches to zero and
                 the decay rate as $|y|$ approaches to infinity.
 When $1<\sm<2$, a controllable decay rate of kernel allows H\"older
 regularities in a larger class, which is invariant under an one-sided
                scaling i.e. if $u$ is a solution of the homogeneous
                equation, then so is  $u_{\ep}(x)=\ep^{-\sm}u(\ep
                x)$ for $0<\ep\le 1$.
                Critical case ($\sm=1$) and supercritical case
                ($0<\sm<1$) have been studied in \cite{BBC} with different
                techniques due to the slow decay rate of the kernel as
                $|x|\rightarrow \infty$.

\subsection{Outline of Paper}
In Section \ref{sec-6},  we show various  nonlocal versions of the
Alexandroff-Backelman-Pucci estimate  to handle the difficulties
caused by the gradient effect. It has different orders at
subcritical, critical  and supercritical cases.  In Section
\ref{sec-7}, we construct a special function and apply A-B-P
estimates to obtain the decay estimates of upper level sets which is
essential in proving H\"older estimates in Section \ref{sec-9}.

In
Section \ref{sec-8.0}, we prove a Harnack inequality which plays an
important role in analysis. And then the H\"older estimates and  an interior
$\rC^{1,\ap}$-estimates come from the arguments at \cite{CS,KL}.

%%%%%%%%%%%%%%%%%%%%%%%%%%%%%%%%%%%%%%%%%%%%%%%%%%%%%%%%%%%%%%%%%%%%%%%%%%%%%%%%%%%%%%%%%%%
%%%%%%%%%%%%%%%%%%%%%%%%%%%%%%%%%%%%%%%%%%%%%%%%%%%%%%%%%%%%%%%%%%%%%%%%%%%%%%%%%%%%%%%%%%%
\section{ A nonlocal Alexandroff-Bakelman-Pucci estimate}\label{sec-6}

The Alexandroff-Bakelman-Pucci (A-B-P) estimate plays an important
role in  Krylov and Sofonov theory \cite{KS} on Harnack inequality
for linear uniformly elliptic equations with measurable
coefficients. The concept of viscosity solution is given pointwise
through touching test function; see \cite{KL}. A-B-P estimate tells
us that the maximum value is controlled by an integral quantity of
the source term on the contact set, which will give us key lemma
(Lemma \ref{lem-6.1}) saying that the pointwise value of nonnegative
function gives the lower bound of the measure of lower level set. We
employ measure theoretical version of A-B-P estimate introduced at
\cite{CS} and extended to nonsymmetric case at \cite{KL}.

New A-B-P estimates below are two main differences from the
arguments at \cite{CS,KL}.
\begin{itemize}
\item The operators considered at \cite{CS,KL} are scaling invariant,
  but \eqref{eq-2.1} doesn't have such property due to $\chi_{B_1}(y)$ in the gradient term.
  So we keep the size of the domain $B_R$ at the following estimates.
\item The control of bad set, Lemma \ref{lem-6.1}, deteriorates as $R\rightarrow 0$ since
  $R^{\sigma -2}J_{\sigma}(R)$ goes to $\infty$ as $R\to 0$. A-B-P estimate will be used to prove
  key Lemma \ref{lem-7.4} where we have an extra term $R$ to subdue the
  blow-up rate. But we have still $R\times R^{\sigma -2}J_{\sigma}(R)
  \approx R^{\sigma -1}$ (for $0<\sigma<1$) and $-log(R)$ (for $\sigma=1$)
  which blows up when $0<\sigma\leq 1$. So we introduced a subclass $\cS_{\eta}$ of $\cS$ and a different
  version of A-B-P estimate (Lemma \ref{lem-6.7}) for $0<\sigma\leq  1$ where we have better
  control of gradient term. Still $\cS_{\eta}$ includes a large class of nonlinear operators, Lemma \ref{lem-subclass-1},\ref{lem-subclass-2}.
\end{itemize}

Let $R\in(0,R_0]$ for some $R_0\in(0,1)$ (in fact, the existence of
$R_0$ was mentioned in \cite{KL}) and let $u:\BR^n\to\BR$ be a
function which is not positive outside the ball $B_{R/2}$ and is
upper semicontinuous on $\overline B_R$. We consider its {\it
concave envelope} $\Gm$ in $B_{2R}$ defined as
$$\Gm(x)=\begin{cases}\inf\{p(x): p\in\Pi,\,p>u^+\,\,\text{in $B_{2R}$}\}
&\text{ in $B_{2R}$,}\\0 &\text{ in $\BR^n\s B_{2R}$,}\end{cases}$$
where $\Pi$ is the family of all the hyperplanes in $\BR^n$. Also we
denote the {\it contact set} of $u$ and $\Gm$ in $B_R$ by
$\cC(u,\Gm,B_R)=\{y\in B_R:u(y)=\Gm(y)\}$.

\subsection{A-B-P estimate with blow-up rate}\label{sec-abp-blow}

\begin{lemma}\label{lem-6.1} Let $0<\sigma<2$ and $0<R\le R_0$.
Let $u\le 0$ in $\BR^n\s B_R$ and let $\Gm$ be its concave envelope
in $B_{2R}$. If $\,u\in \rB(\BR^n)$ is a viscosity subsolution to
$\cM^+_{\fL_0} u=-f$ on $B_R$ where $f:\BR^n\to\BR$ is a function
with $f>0$ on $\cC(u,\Gm,B_R)$, then there exists some constant
$C>0$ depending only on $n,\ld$ and $\Ld$ $($but not on $\sm$$)$
such that for any $x\in\cC(u,\Gm,B_R)$ and any $M>0$ there is some
$k\in\BN\cup\{0\}$ such that

\begin{equation}\label{eq-6.1}
\bigl|\{y\in R_k: \mu^-(u,x,y;\n\Gm)\geq  M_0 r_k^2\}\bigr|\le
C\f{R^{\sigma-2}(f(x)+J_{\sigma}(R)|\n\Gm(x)|)}{M}|R_k|
\end{equation} where $R_k=B_{r_k}\s B_{r_{k+1}}$ for $r_k=\vr_0
2^{-\f{1}{2-\sm}-k}R$, $\vr_0=1/(16\sqrt n)$ and $J_{\sigma}(R)$ is
$\frac{1}{1-\sigma}(1-R^{1-\sigma})$ for $\sm\in(0,1)\cup(1,2)$ and
$-\log (R)$ for $\sigma=1$. Here, $\n\Gm(x)$ denotes any element of
the superdifferential $\pa\Gm(x)$ of $\,\Gm$ at $x$.
\end{lemma}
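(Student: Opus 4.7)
The plan is to adapt the Caffarelli--Silvestre-type nonlocal ABP argument to accommodate the drift-like contribution caused by the nonsymmetry of $K$. At a contact point $x\in\cC(u,\Gm,B_R)$, concavity of $\Gm$ on $B_{2R}$ supplies the supporting hyperplane $\el(z)=\Gm(x)+\n\Gm(x)\cdot(z-x)\ge\Gm(z)\ge u(z)$, with equality at $x$. The quadratic perturbation $\vp(z):=\el(z)+C|z-x|^2$ (with $C$ taken large) lies in $\rC^2_{B_R}(u;x)^+$ and has $\n\vp(x)=\n\Gm(x)$, so the viscosity subsolution hypothesis $\cM^+_{\fL_0}u\ge -f$ gives $\cM^+_{\fL_0}u(x;\n\Gm)\ge -f(x)$.

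Next, I would control $\mu^+(u,x,y;\n\Gm)$ pointwise. When $|y|<1$ and $x+y\in B_{2R}$, the bound $u(x+y)\le\el(x+y)$ together with $u(x)=\el(x)$ forces $\mu\le 0$, hence $\mu^+=0$. When $|y|<1$ but $x+y\notin B_{2R}$ --- which requires $|y|>R$ --- the hypotheses $u\le 0$ off $B_R$ and $\Gm(x)\ge 0$ give $\mu^+\le|\n\Gm(x)||y|$. When $|y|\ge 1$ the $\chi_{B_1}$ correction is inactive and $\mu^+$ is supported in the (possibly empty) bounded annulus $\{1\le|y|<3R\}$ with the same linear bound. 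Integrating $\Ld\mu^+/|y|^{n+\sm}$ over $\{R<|y|<1\}$ yields $C\Ld|\n\Gm(x)|\int_R^1 r^{-\sm}\,dr=C\Ld|\n\Gm(x)|J_\sm(R)$, reproducing exactly the three regimes (subcritical, logarithmic at $\sm=1$, supercritical) in the definition of $J_\sm$; the $|y|\ge 1$ piece is uniformly bounded and absorbed into the same factor. Rearranging the extremal inequality then produces
\begin{equation*}
(2-\sm)\int_{\R^n}\f{\ld\,\mu^-(u,x,y;\n\Gm)}{|y|^{n+\sm}}\,dy \;\le\; C\bigl(f(x)+(2-\sm)|\n\Gm(x)|J_\sm(R)\bigr).
\end{equation*}

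To conclude, I would dyadicize by contradiction. Suppose the claim fails at every $k\ge 0$, i.e.\ the set $S_k:=\{y\in R_k:\mu^-\ge Mr_k^2\}$ satisfies $|S_k|>CR^{\sm-2}\bigl(f(x)+J_\sm(R)|\n\Gm(x)|\bigr)|R_k|/M$ for all $k$. On each ring, $|y|\le r_k$ and $\mu^-\ge Mr_k^2$ on $S_k$ force the $R_k$-contribution to the displayed integral to exceed $(2-\sm)\ld M r_k^{2-n-\sm}|S_k|$, which after using $|R_k|\sim r_k^n$ becomes at least $C(2-\sm)\ld r_k^{2-\sm}R^{\sm-2}\bigl(f(x)+J_\sm(R)|\n\Gm(x)|\bigr)$. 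With $r_k=\vr_0 2^{-1/(2-\sm)-k}R$, the geometric sum gives $\sum_k r_k^{2-\sm}\le CR^{2-\sm}/(2-\sm)$, so the cancellation of $(2-\sm)$ against $(2-\sm)^{-1}$ reduces the total lower bound to $C(f(x)+J_\sm(R)|\n\Gm(x)|)$, contradicting the displayed inequality once $C$ is chosen sufficiently large.

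The main obstacle is the clean bookkeeping of the gradient contribution. The linear loss $|y|$ from the drift term combined with integration against $|y|^{-n-\sm}$ on $\{R<|y|<1\}$ is exactly what produces $J_\sm(R)$, with its characteristic split at $\sm=1$; and the $R^{\sm-2}$ factor in the final bound is not a scaling loss but the sole trace of the absence of scale invariance surviving the $(2-\sm)$-geometric cancellation in the dyadic sum --- constants uniform in $\sm\in(0,2)$ must be tracked throughout to preserve the intended robustness as $\sm\uparrow 2$.
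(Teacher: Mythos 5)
Your proposal is correct and follows essentially the same route as the paper: at the contact point use the supporting hyperplane to obtain $\cM^+_{\fL_0}u(x;\nabla\Gamma)\ge -f(x)$ with $\nabla\varphi(x)=\nabla\Gamma(x)$, bound $\mu^+(u,x,y;\nabla\Gamma)$ by $0$ on $B_R$ (concave envelope) and by $|\nabla\Gamma(x)||y|\chi_{B_1}(y)$ outside $B_R$ to produce the $\omega_n\Lambda J_\sigma(R)|\nabla\Gamma(x)|$ term, then argue by contradiction on the dyadic rings with the geometric series $\sum_k r_k^{2-\sigma}\approx R^{2-\sigma}/(2-\sigma)$ cancelling the $(2-\sigma)$ prefactor. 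The only cosmetic difference is that you split the $\mu^+$ bound into three explicit sub-cases, while the paper states the combined estimate $\mu^+\le|\nabla\Gamma(x)||y|\chi_{B_1}(y)$ for $y\notin B_R$ directly; the resulting integral and the rest of the argument are identical.
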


\begin{proof}  Let $0<\sigma<2$ and
$0<R\le R_0$. Take any $x\in\cC(u,\Gm,B_R)$. Since $u$ can be
touched by a hyperplane from above at $x$, we see that
$\n\vp(x)=\n\Gm(x)$ for some $\vp\in \rC^2_{B_R}(u;x)^+$. Thus
$\cM^+_{\fL_0} u(x;\n\Gm)$ is well-defined and we have that
$$\cM^+_{\fL_0} u(x;\n\Gm)=(2-\sm)\int_{\BR^n}\f{\Ld\mu^+(u,x,y;\n\Gm)
-\ld\mu^-(u,x,y;\n\Gm)}{|y|^{n+\sm}}\,dy.$$ We note that
$\mu(u,x,y;\n\Gm) =u(x+y)-u(x)-(\n\Gm(x)\cdot y )\chi_{B_1}(y)\le 0$
for any $y\in B_R$ by the definition of concave envelope of $u$ in
$B_{2R}$. Since $\mu^+(u,x,y;\n\Gm)\leq |\n \Gm(x)||y|
\chi_{B_1}(y)$ for any $y\in \BR^n \s B_R$, we have that
\begin{equation}
\begin{split}
\int_{\BR^n}\f{\Ld\mu^+(u,x,y;\n\Gm)}{|y|^{n+\sm}}\,dy&\le
\int_{B_1\s
B_R}\f{\Ld |\n\Gm(x)|| y|}{|y|^{n+\sm}}\,dy\\
&=\om_n\Ld\,J_{\sigma}(R) |\n\Gm(x)|
\end{split}
\end{equation}
where $\om_n$ denotes the surface area of $S^{n-1}$ and
\begin{equation}
\begin{split}
J_{\sigma}(R)=
\begin{cases}
 \frac1{1-\sigma}\left(1-R^{1-\sigma}\right)&\text{for $\sm\in(0,1)\cup(1,2),$}\\
-\log (R) &\text{for $\sigma=1$.}
\end{cases}
\end{split}
\end{equation}
Here we see that $|J_{\sm}(R)|$ is finite for $0<\sigma<2$. Thus it
follows from simple calculation that
\begin{equation*}
\begin{split}-f(x)&\le\cM^+_{\fL_0} u(x;\n\Gm)\\&=(2-\sm)\biggl(\,\int_{\BR^n}\f{-\ld\mu^-(u,x,y;\n\Gm)}{|y|^{n+\sm}}\,dy
+\int_{\BR^n}\f{\Ld\mu^+(u,x,y;\n\Gm)}{|y|^{n+\sm}}\,dy\,\biggr)\\
&\le(2-\sm)\int_{B_{r_0}}\f{-\ld\mu^-(u,x,y;\n\Gm)}{|y|^{n+\sm}}\,dy
+(2-\sigma)\om_n\Ld\,J_{\sigma}(R)\,|\n\Gm(x)|
\end{split}
\end{equation*}
for any $x\in\cC(u,\Gm,B_R)$, where $r_0=\vr_0 2^{-\f{1}{2-\sm}}R$.
Splitting the above integral in the rings $R_k$, we have that
\begin{equation}\label{eq-6.2}
 f(x)\ge(2-\sm)\ld\sum_{k=0}^{\iy}\int_{R_k}\f{\mu^-(u,x,y;\n\Gm)}{|y|^{n+\sm}}\,dy
-(2-\sm)\om_n\Ld J_{\sigma}(R)  |\n\Gm(x)|.
\end{equation}

Assume that the conclusion \eqref{eq-6.1} does not hold, i.e. for
any $C>0$ there are some $x_0\in\cC(u,\Gm,B_R)$ and $M_0>0$ such
that
\begin{equation*}\label{eq-6.3}
\bigl|\{y\in R_k: \mu^-(u,x_0,y;\n\Gm)\geq  M_0
r_k^2\}\bigr|>C\f{R^{\sigma-2}\bigl(f(x_0)+ J_{\sm}(R)|\n\Gm
(x_0)|\bigr)}{M_0}|R_k|
\end{equation*}
for all $k\in\BN\cup\{0\}$. Since $(2-\sm)\frac{1}{1-2^{-(2-\sm)}}$
remains bounded below for $\sm\in (0,1]\,$, it thus follows from
\eqref{eq-6.2} that
\begin{equation}
\begin{split}
\f{f(x_0)}{2-\sm}&\ge\ld\sum_{k=0}^{\iy}\int_{R_k}\f{\mu^-(u,x_0,y;\n\Gm(x))}{|y|^{n+\sm}}\,dy-\om_n\Ld J_{\sigma}(R) |\n\Gm(x_0)|\\
&\ge
c\sum_{k=0}^{\iy}M_0\,\f{r_k^2}{r_k^{\sm}}\,CR^{\sigma-2}\,\f{f(x_0)
+J_{\sigma}(R)|\n\Gm(x_0)|}{M_0}-\om_n\Ld J_{\sigma}(R)
|\n\Gm(x_0)|.
\end{split}\end{equation}
Thus this implies that
\begin{equation*}\begin{split}f(x_0)+ (2-\sigma)\om_n\Ld
J_{\sigma}(R) |\n\Gm(x_0)|&\ge \f{c\rho_0^2}{1-2^{-(2-\sm)}} C(
f(x_0)+J_{\sigma}(R) |\n\Gm(x_0)|)\\ &\ge C(f(x_0)+ (2-\sigma)
J_{\sm}(R) |\n\Gm(x_0)|)\end{split}\end{equation*} for any $C>0$.
Taking $C$ large enough, we obtain a contradiction. Hence we are
done.
\end{proof}

\noindent{\it Remark.} Lemma \ref{lem-6.1} would hold for any particular
choice of $\vr_0$ by modifying $C$ accordingly. The particular
choice $\vr_0=1/(16\sqrt n)$ is convenient for the proofs in Section
\ref{sec-7}.

\begin{lemma}\cite{CS}\label{lem-6.2} Let $\Gm$ be a concave function on $B_r(x)$
where $x\in\BR^n$ and let $h>0$. If $|\{y\in
S_r(x):\Gm(y)<\Gm(x)+(y-x)\cdot\n\Gm(x)-h\}|\le\ep\,|S_r(x)|$ for
any small $\ep>0$ where $S_r(x)=B_r(x)\s B_{r/2}(x)$, then we have
$\Gm(y)\ge\Gm(x)+(y-x)\cdot\n\Gm(x)-h$ for any $y\in
B_{r/2}(x)$.\end{lemma}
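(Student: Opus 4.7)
The plan is to argue by contradiction after reducing to the case where the reference affine function vanishes, and then to use the concavity of $\Gm$ along rays from $x$ to convert a pointwise violation in $B_{r/2}(x)$ into a violation on a set of positive measure in $S_r(x)$. After translating so that $x=0$ and subtracting $L(y):=\Gm(0)+y\cdot\n\Gm(0)$ from $\Gm$, and abusing notation to call the resulting function $\Gm$ again, $\Gm$ is concave on $B_r$ with $\Gm(0)=0$, $\n\Gm(0)=0$, and hence $\Gm\le 0$ throughout $B_r$ (since $\Gm$ lies below its tangent hyperplane at the origin). The hypothesis becomes $|\{y\in S_r:\Gm(y)<-h\}|\le\ep\,|S_r|$, and the goal becomes $\Gm\ge -h$ on $B_{r/2}$.

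Assume for contradiction that there is $y_0\in B_{r/2}$ with $\Gm(y_0)<-h$. Concave functions are continuous on the interior of their domain, so there is $\rho>0$ with $B_\rho(y_0)\subset B_{r/2}$ and $\Gm<-h$ throughout $B_\rho(y_0)$. For each $y\in B_\rho(y_0)$ and each $t\in(r/2,r)$, set $w:=t\,y/|y|\in S_r$; since $|y|<r/2<t$, the point $y$ lies on the segment from $0$ to $w$ with $y=(|y|/t)\,w$, and concavity along that segment gives
$$
\Gm(y)\ge\frac{t-|y|}{t}\,\Gm(0)+\frac{|y|}{t}\,\Gm(w)=\frac{|y|}{t}\,\Gm(w).
$$
Combined with $\Gm(y)<-h$ and $t/|y|>1$, this forces $\Gm(w)\le(t/|y|)\,\Gm(y)<-h$, so each such $w$ belongs to the bad set in $S_r$.

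The set of $w$ so produced is the radial shell $\{tu:t\in(r/2,r),\,u\in U\}$, where $U:=\{y/|y|:y\in B_\rho(y_0)\}\subset S^{n-1}$ is a nonempty open spherical cap. Its Lebesgue measure equals $\frac{r^n-(r/2)^n}{n}\,|U|$, a strictly positive constant $c_0=c_0(n,r,\rho,|y_0|)$. Taking $\ep$ small enough that $\ep\,|S_r|<c_0$ contradicts the hypothesis and completes the proof.

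The main obstacle I anticipate is the quantification of $\ep$: the constant $c_0$ above depends on the continuity radius $\rho$, which in turn depends on how far $\Gm(y_0)$ undershoots $-h$ and on the local oscillation of $\Gm$ near $y_0$, so $\ep$ cannot be taken as a purely dimensional constant without additional input. The natural reading of ``for any small $\ep>0$'' in the statement is then that some sufficiently small $\ep>0$ suffices for the implication, which is exactly what the radial-propagation argument delivers; the geometric bookkeeping is otherwise routine.
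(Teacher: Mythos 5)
The paper provides no proof here: the lemma is quoted from \cite{CS}, so the relevant comparison is with the argument used there, which rests on a piece of structure your write-up does not exploit, namely the convexity of the superlevel set $\{\Gm\ge -h\}$ (after your normalization $\Gm(0)=0$, $\n\Gm(0)=0$). Your reduction and the radial inequality $\Gm(t\,y/|y|)\le (t/|y|)\,\Gm(y)$ are both fine, and your last paragraph correctly identifies the weak point, but it is a genuine gap, not a harmless ambiguity: the ball $B_\rho(y_0)$ is produced by continuity alone, so the lower bound $c_0$ on the bad-set measure, and hence the admissible $\ep$, depends on $\Gm$ through $\rho$. The lemma is invoked (via Corollary~\ref{cor-6.3} and in the proof of Theorem~\ref{thm-6.4}) with a single small $\ep$ chosen independently of the concave envelope $\Gm$ and of the contact point, applied uniformly over a family of such envelopes, so a $\Gm$-dependent threshold cannot serve.

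The missing idea is to obtain the thick bad set from convexity rather than continuity. The set $C:=\{y\in B_r:\Gm(y)\ge -h\}$ is convex and contains a neighborhood of $0$. If $y_0\in B_{r/2}$ has $\Gm(y_0)<-h$, then $y_0\notin C$; along the segment $[0,y_0]$ the concave scalar function $s\mapsto\Gm(sy_0)$ equals $-h$ at exactly one $s_1\in(0,1)$ and is $<-h$ for $s>s_1$, so $z_1:=s_1y_0\in\pa C$ with $|z_1|<|y_0|<r/2$. A supporting hyperplane $H$ to $C$ at $z_1$, with unit normal $e$ pointing away from $0$, has its open half-space $H^+:=\{z:(z-z_1)\cdot e>0\}$ disjoint from $C$, hence $\Gm<-h$ on $B_r\cap H^+$. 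Since $\dist(0,H)=z_1\cdot e\le |z_1|<r/2$, one has $\{z:z\cdot e>r/2\}\subset H^+$, and $|S_r\cap\{z:z\cdot e>r/2\}|=c_n|S_r|$ for a purely dimensional $c_n>0$, by rotational invariance and scaling. Taking $\ep<c_n$ gives the contradiction with a universal threshold, which is precisely what your continuity-based thickening cannot deliver.
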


\begin{cor}\label{cor-6.3}
For any $\ep>0$, there is a constant $C>0$ such that for any
function $u$ with the same hypothesis as Lemma \ref{lem-6.1}, there
is some $r\in (0,\vr_0 2^{-\f{1}{2-\sm}}R)$ such that
\begin{equation*}\begin{split} &\f{|\{y\in S_r(x):u(y)<u(x)+(y-x)\cdot\n\Gm(x)-C\,R^{\sigma-2}(f(x)+J_{\sigma}(R)|\n\Gm(x)|)
r^2\}|}{|S_r(x)|}\le\ep,\\
&\int_{\overline Q}g_{\xi}(\n\Gm (y))\det [D^2 \Gm(y)]^-\,dy\le C
R^{n(\sigma-2)}\sup_{y\in\overline Q}\bigl(J_{\sigma}(R)^n+\xi^{-n}
|f(y)|^n\bigr)\, |Q|
\end{split}\end{equation*}
for any $\e>0$ and any cube $Q\subset B_{r/4}(x)$ with diameter $d$
such that $x\in\overline{Q}$ and $r/4<d<r/2$, where
$\vr_0=1/(16\sqrt n)$ and $g_{\xi}(z)=(|z|^{n/(n-1)}+\xi
^{n/(n-1)})^{1-n}$.\end{cor}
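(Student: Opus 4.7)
The plan is to treat the two assertions in sequence, both with respect to a fixed $x\in\cC(u,\Gm,B_R)$. For the first inequality, apply Lemma \ref{lem-6.1} with the threshold $M=(C_1/\ep)R^{\sm-2}(f(x)+J_{\sm}(R)|\n\Gm(x)|)$, where $C_1$ is the constant furnished by that lemma. This produces some ring $R_k=B_{r_k}\s B_{r_{k+1}}$ on which $|\{y\in R_k:\mu^-(u,x,y;\n\Gm)\ge Mr_k^2\}|\le\ep|R_k|$. Since $R_k\subset B_R\subset B_1$ gives $\chi_{B_1}(y)=1$ on $R_k$, the inequality $\mu^-\ge Mr_k^2$ reads $u(x+y)\le u(x)+\n\Gm(x)\!\cdot\!y-Mr_k^2$. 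The change of variables $z=x+y$ turns $R_k$ into the shell $S_{r_k}(x)$, and taking $r=r_k$ yields the first displayed inequality with $C=C_1/\ep$; the range $r\in(0,\vr_0 2^{-1/(2-\sm)}R)$ is immediate from $r_k=\vr_0 2^{-1/(2-\sm)-k}R$ with $k\ge 0$.

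For the integral inequality, the strategy is to establish the pointwise bound $\det[D^2\Gm(y)]^-\le CR^{n(\sm-2)}(f(y)+J_{\sm}(R)|\n\Gm(y)|)^n$ at a.e.\ $y\in Q\cap\cC(u,\Gm,B_R)$ and to observe that the Monge--Amp\`ere measure of a concave envelope is supported on its contact set, so the integrand vanishes a.e.\ outside $\cC$. For each $y\in Q\cap\cC$, applying the first assertion at $y$ produces some $r(y)>0$ on which the $u$-bad set in $S_{r(y)}(y)$ has measure at most $\ep|S_{r(y)}(y)|$. Since $u\le\Gm$ on $B_{2R}$ and $u(y)=\Gm(y)$, the ``$\Gm$-bad set'' $\{z\in S_{r(y)}(y):\Gm(z)<\Gm(y)+(z-y)\!\cdot\!\n\Gm(y)-\ap(y)r(y)^2\}$, with $\ap(y)=CR^{\sm-2}(f(y)+J_{\sm}(R)|\n\Gm(y)|)$, is contained in the $u$-bad set and hence also of measure at most $\ep|S_{r(y)}(y)|$. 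Choosing $\ep$ small enough to trigger Lemma \ref{lem-6.2} applied to $\Gm$ yields $\Gm(z)\ge\Gm(y)+(z-y)\!\cdot\!\n\Gm(y)-\ap(y)r(y)^2$ for every $z\in B_{r(y)/2}(y)$. Combined with the concavity upper bound $\Gm(z)\le\Gm(y)+(z-y)\!\cdot\!\n\Gm(y)$, this sandwiches $\Gm$ within $\ap(y)r(y)^2$ of its tangent hyperplane on $B_{r(y)/2}(y)$, and standard interior second-derivative control for concave functions produces $|D^2\Gm(y)|\le C\ap(y)$ at every Alexandrov point, proving the pointwise bound.

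To finish, multiply by $g_\xi(\n\Gm(y))$, use $(a+b)^n\le 2^{n-1}(a^n+b^n)$, and apply the two elementary estimates $g_\xi(z)\le\xi^{-n}$ (from $g_\xi(0)=\xi^{-n}$ and monotonicity in $|z|$) and $g_\xi(z)|z|^n\le 1$ (from $|z|^n=(|z|^{n/(n-1)})^{n-1}\le(|z|^{n/(n-1)}+\xi^{n/(n-1)})^{n-1}$). These dominate the integrand by $CR^{n(\sm-2)}(\xi^{-n}f(y)^n+J_{\sm}(R)^n)$, and replacing each factor by its supremum over $\overline Q$ yields the second inequality. The main technical obstacle is the Hessian step: passing rigorously from the $L^\infty$ sandwich of $\Gm$ between two parallel hyperplanes on $B_{r(y)/2}(y)$ to the pointwise Alexandrov estimate $|D^2\Gm(y)|\le C\ap(y)$, valid uniformly at a.e.\ contact point, relies on the quantitative second-order regularity of concave functions together with the fact that a.e.\ contact point is a point of twice-differentiability of $\Gm$.
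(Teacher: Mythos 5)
Your first part matches the paper exactly: take $M=C_1R^{\sigma-2}\bigl(f(x)+J_\sigma(R)|\nabla\Gm(x)|\bigr)/\ep$ in Lemma \ref{lem-6.1}, translate the ring $R_k$ to $S_{r_k}(x)$, and use $\chi_{B_1}=1$ on $R_k\subset B_R$; the scheme for the second part (reduce to a pointwise bound on $\det[D^2\Gm]^-$ at contact points, integrate, use that $\det[D^2\Gm]^-=0$ a.e.\ off the contact set) is also the paper's.

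The step you flag as ``the main technical obstacle'' is indeed a genuine gap, and the way you propose to fill it does not work. A concave function that is sandwiched between its tangent hyperplane $L$ and $L-h$ on a \emph{fixed} ball $B_\rho(y)$ can still have an Alexandrov Hessian at $y$ far larger than $h/\rho^2$: take a one--dimensional slice $\phi(t)=-Nt^2$ for $|t|<\ep$ extended linearly; then $\phi$ is concave, $\phi(0)=\phi'(0)=0$, $\phi''(0)=-2N$ with $N$ arbitrary, while $\phi\ge -h$ on $[-\rho,\rho]$ with $h\approx 2N\ep\rho\ll N\rho^2$ when $\ep\ll\rho$. So ``standard interior second-derivative control for concave functions'' from a single-scale $L^\infty$ sandwich does not yield $|D^2\Gm(y)|\le C\alpha(y)$. (The paper's ``If $\Gm$ is $C^2$, then $\det[D^2\Gm(x)]^-\le\cdots$'' clause has exactly the same weakness; it is being used as a heuristic.) The reliable route — and the one actually compatible with Lemma \ref{lem-6.2} — is the Alexandrov--Bakelman gradient-image estimate rather than a pointwise Hessian bound. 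The sandwich $L-h\le\Gm\le L$ on $B_{r/2}(x)$, with $h=CMr^2$, forces $|\nabla\Gm(y)-\nabla\Gm(x)|\le Ch/r$ for every $y\in B_{r/4}(x)$ (compare the two supporting affine functions along the segment from $y$ in the direction of $\nabla\Gm(x)-\nabla\Gm(y)$), hence $\nabla\Gm(\overline Q)\subset B_{CMr}(\nabla\Gm(x))$. Using the Monge--Amp\`ere change of variables $\int_{\overline Q}g_\xi(\nabla\Gm)\det[D^2\Gm]^-\,dy=\int_{\nabla\Gm(\overline Q)}g_\xi(p)\,dp$, and then the two elementary estimates $g_\xi\le\xi^{-n}$ and $g_\xi(p)\le|p|^{-n}$ (split according to whether $|\nabla\Gm(x)|\lesssim Mr$ or $|\nabla\Gm(x)|\gtrsim Mr$), one obtains precisely $CR^{n(\sigma-2)}\sup_{\overline Q}\bigl(J_\sigma(R)^n+\xi^{-n}|f|^n\bigr)|Q|$ without ever needing a pointwise estimate on $D^2\Gm$. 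Your elementary inequalities $g_\xi\le\xi^{-n}$ and $g_\xi(z)|z|^n\le 1$ are correct and are exactly what is needed in this cleaner argument; it is the detour through ``$|D^2\Gm(y)|\le C\alpha(y)$'' that should be dropped.
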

\pf The first part can be obtained by
taking $M=C R^{\sigma-2}( f(x)+J_{\sigma}(R)|\n\Gm(x)|)/\ep$ in
Lemma \ref{lem-6.1}. Also the second part follows as a consequence
of Lemma \ref{lem-6.2} and concavity. If $\Gamma$ is $C^2$, then we
have that
\begin{equation*}\begin{split}
\det[D^2 \Gm(x)]^- &\le C (R^{\sigma-2}
f(x)+R^{\sigma-2} J_{\sigma}(R)|\n\Gm(x)|)^n\\
&\le 4^n C \frac{R^{n(\sigma-2)}
J_{\sigma}(R)^n+\xi^{-n}R^{n(\sigma-2)}|f(x)|^n}{g_{\xi}(\n\Gm(x))}.
\end{split}\end{equation*} Then we have that
$\,\int_{\overline {Q^*}}g_{\xi}(\n\Gm(x))\det[D^2\Gm(x)]^-\,dx\le
4^n C \int_{\overline Q}(R^{n(\sigma-2)}
J_{\sigma}(R)^n+\xi^{-n}R^{n(\sigma-2)}|f(x)|^n)\,dx$ if $\Gm$ is
$C^2$, where $Q^*=\cC(u,\Gm,B_R)\cap\overline Q$. It is also true
for the general concave envelope $\Gm$ through  an approximation
with $C^2$-concave functions.

Take any $y\in\cC(u,\Gm,B_R)\cap Q$ where $Q\subset B_{r/4}(x)$ is a
cube with diameter $d$ such that $x\in\overline Q$ and $r/4<d<r/2$.
Similarly to the above, we can obtain that $\int_{\overline
Q}g_{\xi}(\n\Gm(y))\det[D^2 \Gm(y)]^-\,dy\le 4^n C \int_{\overline
Q}( R^{n(\sigma-2)}
J_{\sigma}(R)^n+\xi^{-n}R^{n(\sigma-2)}|f(y)|^n)\,dy$ because
$\det[D^2 \Gm(\cdot)]^-=0$ $\aee$ on $Q\s\cC(u,\Gm,B_R)$ as in
\cite{CC}. Hence this implies the second part. \qed

We obtain a nonlocal version of Alexandroff-Bakelman-Pucci
estimate in the following theorem.

\begin{thm}\label{thm-6.4} Let $u$ and $\Gm$ be functions as in
Lemma \ref{lem-6.1}. Then there exist a finite family
$\{Q_j\}_{j=1}^m$ of open cubes $Q_j$ with diameters $d_j$ such that

$(a)$ Any two cubes $Q_i$ and $Q_j$ do not intersect, $(b)$
$Q^*_j=\cC(u,\Gm,B_R)\cap\overline Q_j\neq\phi$ for $j$,

$(c)$ $\cC(u,\Gm,B_R)\subset\bigcup_{j=1}^m Q^*_j$, $(d)$
$d_j\le\vr_0 2^{-\f{1}{2-\sm}}R$ where $\vr_0=1/(16\sqrt n)$,

$(e)$ $\int_{\overline Q_j}g_{\xi}(\n\Gm (y))\det (D^2
\Gm(y))^-\,dy\le C R^{n(\sigma-2)}(\sup_{\overline Q_j}(
J_{\sigma}(R)^n+\xi^{-n}|f|^n) |Q^*_j|$,

$(f)$ $|\{y\in 4\sqrt n\,Q_j:u(y)\ge\Gm(y)-C
R^{(\sigma-2)}(\sup_{\overline Q_j} (f+J_{\sigma}(R)|\n\Gm|)
d_j^2\}|\ge\xi_0 |Q_j|$,

\noindent where the constants $C>0$ and $\xi_0>0$ depend on $n,\Ld$
and $\ld$ $($ but not on $\sm$$)$.
\end{thm}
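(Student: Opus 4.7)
The plan is to assemble the pointwise data supplied by Corollary \ref{cor-6.3} into a disjoint covering of $\cC(u,\Gm,B_R)$ by means of a Calder\'on--Zygmund-type dyadic stopping-time decomposition, in the spirit of \cite{CC,CS}. Corollary \ref{cor-6.3} attaches to each $x\in\cC(u,\Gm,B_R)$ an admissible scale $r(x)\in(0,\vr_0 2^{-1/(2-\sm)}R)$ at which both the density estimate and the determinant integral bound hold; the task is then to select a finite disjoint family of cubes whose diameters are coherent with these scales and whose union still covers $\cC(u,\Gm,B_R)$.

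First I would enclose $B_{2R}$ in a dyadic cube $K_0$ of side comparable to $R$, and for each $x\in\cC(u,\Gm,B_R)$ identify the unique dyadic level $k(x)$ for which the dyadic cube $Q(x)\ni x$ has diameter in the admissible window $(r(x)/4,r(x)/2)$; such a level exists because this window has ratio $2$ and is bounded above by $\vr_0 2^{-1/(2-\sm)}R$. Let $\mathcal F=\{Q(x):x\in\cC(u,\Gm,B_R)\}$ and let $\{Q_j\}_{j=1}^m$ be the maximal elements of $\mathcal F$ under inclusion. Since any two dyadic cubes are either nested or disjoint, the $Q_j$ are pairwise disjoint and $\cC(u,\Gm,B_R)\subset\bigcup_j Q(x)\subset\bigcup_j \overline{Q_j}$, which yields (a), (b), (c); property (d) follows from $d_j<r(x_j)/2<\vr_0 2^{-1/(2-\sm)}R/2$ for any chosen $x_j\in\overline{Q_j}\cap\cC(u,\Gm,B_R)$. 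Property (e) is the second conclusion of Corollary \ref{cor-6.3} applied to $Q_j$ with base point $x_j$, after noting that $Q_j\subset B_{d_j}(x_j)\subset B_{r_j/2}(x_j)$ lies in the ball on which concavity of $\Gm$ is controlled via Lemma \ref{lem-6.2}.

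For property (f) I would combine the density conclusion of Corollary \ref{cor-6.3} with the concavity of $\Gm$. The first conclusion of Corollary \ref{cor-6.3} produces a subset of $S_{r_j}(x_j)$ of relative measure at least $1-\e$ on which $u(y)\ge u(x_j)+(y-x_j)\cdot\n\Gm(x_j)-CR^{\sm-2}(f(x_j)+J_\sm(R)|\n\Gm(x_j)|)r_j^2$ holds, and the supporting-hyperplane inequality $\Gm(y)\le u(x_j)+(y-x_j)\cdot\n\Gm(x_j)$ then converts this into $u(y)\ge\Gm(y)-CR^{\sm-2}\bigl(\sup_{\overline{Q_j}}(f+J_\sm(R)|\n\Gm|)\bigr)d_j^2$, using $r_j\asymp d_j$ to exchange $r_j^2$ for $d_j^2$ and the pointwise value at $x_j$ for the supremum on $\overline{Q_j}$. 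The dilation factor $4\sqrt n$ is tuned precisely so that $4\sqrt n\,Q_j$ captures a fixed fraction of the annulus $S_{r_j}(x_j)$, whose measure is comparable to $|Q_j|$; choosing $\e$ sufficiently small then delivers the desired lower bound $\xi_0|Q_j|$.

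The principal obstacle is the geometric reconciliation between the dyadic cube $Q_j$, in which $x_j$ need not sit near the centre, and the hypothesis $Q\subset B_{r(x)/4}(x)$ of Corollary \ref{cor-6.3}, which tacitly constrains $x$ to be roughly central. I would resolve this by observing that the ball $B_{r/2}(x)$ (rather than $B_{r/4}(x)$) is what Lemma \ref{lem-6.2} actually controls, so that Corollary \ref{cor-6.3} applies to our dyadic $Q_j$ with only a universal enlargement absorbed into the constants $C$ and $\xi_0$ and into the factor $4\sqrt n$ in (f). Finiteness of $\{Q_j\}$ follows from compactness of $\cC(u,\Gm,B_R)\subset\overline B_R$ together with the positive lower bound $|Q_j|\ge c\,r(x_j)^n>0$, and the $R^{\sm-2}$ and $J_\sm(R)$ factors propagate transparently from Corollary \ref{cor-6.3} into (e) and (f).
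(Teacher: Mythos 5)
Your construction is bottom-up: each $x\in\cC(u,\Gm,B_R)$ is assigned a dyadic cube $Q(x)$ at the scale $r(x)$ furnished by Corollary~\ref{cor-6.3}, and $\{Q_j\}$ consists of the maximal elements of this family. The paper instead proceeds top-down: it tiles $B_R$ at diameter $\vr_0 2^{-1/(2-\sm)}R$, recursively bisects any cube on which (e) or (f) fails, and terminates the process by a contradiction argument --- an infinite nested chain of bad cubes would shrink to a contact point $\hat x$, and applying Corollary~\ref{cor-6.3} at $\hat x$ shows some cube in the chain must in fact be good. Your derivations of (e) and (f) from Corollary~\ref{cor-6.3} together with the supporting-hyperplane inequality for $\Gm$ parallel the paper's, and your concern about the base point not being central in the cube is equally present in the paper's argument and absorbed into the constants, so that part is fine.

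There is, however, a genuine gap in your finiteness step. You claim finiteness ``follows from compactness of $\cC(u,\Gm,B_R)\subset\overline B_R$ together with the positive lower bound $|Q_j|\ge c\,r(x_j)^n>0$.'' This does not suffice: Corollary~\ref{cor-6.3} only gives $r(x)\in(0,\vr_0 2^{-1/(2-\sm)}R)$ pointwise, with no uniform positive lower bound on the contact set, so the maximal cubes could a priori be infinitely many disjoint cubes of shrinking diameter accumulating somewhere in $\overline B_R$. Since the theorem asserts a finite family, your proof is incomplete without this step. Closing it requires essentially the paper's contradiction argument transplanted into your framing: if infinitely many maximal $Q_j=Q(y_j)$ survive with $y_j\in\cC(u,\Gm,B_R)$, then $|Q_j|\to 0$; a subsequence $y_j\to\hat y$ converges, and upper semicontinuity of $u$ plus continuity of $\Gm$ give $\hat y\in\cC(u,\Gm,B_R)$; since $\hat y$ lies in the open cube $Q(\hat y)$, eventually $y_j\in Q(\hat y)$, and the nested-or-disjoint property of dyadic cubes then forces $Q(y_j)\subsetneq Q(\hat y)$ once the diameter of $Q(y_j)$ drops below that of $Q(\hat y)$, contradicting maximality. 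So the bottom-up route ends up needing the same key idea as the top-down one. A secondary point: (e) is stated with $|Q_j^*|$ rather than $|Q_j|$, which requires invoking, as in the proof of Corollary~\ref{cor-6.3}, that $\det[D^2\Gm]^-=0$ a.e.\ off the contact set; you should make that explicit when passing from the corollary's bound to the theorem's.
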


\pf In order to obtain such a family, we start by covering $B_R$
with a tiling of cubes of diameter $\vr_0 2^{-\f{1}{2-\sm}}R$. Then
discard all those that do not intersect $\cC(u,\Gm,B_R)$. Whenever
a cube does not satisfy (e) and (f), we split it into $2^n$ cubes
of half diameter and discard those whose closure does not
intersect $\cC(u,\Gm,B_R)$. Now our goal is to prove that
eventually all cubes satisfy (e) and (f) and this process ends
after a finite number of steps.

Assume that the process does not finish in a finite number of steps.
Then we can have an infinite nested sequence of cubes. The
intersection of their closures will be a point $\hat x$. So we may
choose a sequence $\{x_k\}\subset\cC(u,\Gm,B_R)$ with
$\lim_{k\to\iy}x_k=\hat x$. Since $u(x_k)=\Gm(x_k)$ for all
$k\in\BN$, by the upper semicontinuity of $u$ on $\overline B_R$ we
have that $\Gm(\hat x)=\limsup_{k\to\iy}u(x_k)\le u(\hat x)$. Also
we have that $u(\hat x)\le\Gm(\hat x)$ because $u\le\Gm$ on $B_{2R}$
by the definition of the concave envelope $\Gm$ in $B_{2R}$. Thus we
obtain that $u(\hat x)=\Gm(\hat x)$. We will now get a contradiction
by showing that eventually one of these cubes containing $\hat x$
will not split.

Take any $\ep>0$. Then by Corollary \ref{cor-6.3} there is a radius
$r\in (0,\vr_0 2^{-\f{1}{2-\sm}}R)$ such that
\begin{equation*}
\begin{split} &\f{|\{y\in
S_r(\hat x):u(y)<u(\hat x)+(y-\hat x)\cdot\n\Gm(\hat x)-C
R^{\sigma-2}(f(\hat x)+J_{\sigma}(R)|\n\Gm(\hat x)|)
r^2\}|}{|S_r(\hat x)|}\le\ep,\\
&\int_{\overline Q_j}g_{\xi}(\n\Gm (y))\det [D^2 \Gm(y)]^-\,dy\le C
R^{n(\sigma-2)}\sup_{y\in\overline Q_j}(J_{\sigma}(R)^n+\xi^{-n}
|f(y)|^n)\, |Q^*_j|
\end{split}
\end{equation*}
for any $\e>0$ and a cube $Q_j\subset B_{r/4}(x)$ with diameter
$d_j$ such that $x\in\overline Q_j$ and $r/4<d_j<r/2$. So we easily
see that $\overline Q_j\subset B_{r/2}(\hat x)$ and $B_r(\hat
x)\subset 4\sqrt n\,Q_j$. We recall that $\Gm(y)\le u(\hat
x)+(y-\hat x)\cdot\n\Gm(\hat x)$ for any $y\in B_{2R}$ because $\Gm$
is concave on $B_{2R}$ and $\Gm(\hat x)=u(\hat x)$. Since $d_j$ is
comparable to $r$, it thus follows that
\begin{equation*}\begin{split} &\bigl|\{y\in 4\sqrt
n\,Q_j:u(y)\ge\Gm(y)-C R^{\sigma-2}\sup_{\overline Q_j}(f+J_{\sigma}(R)|\n\Gm|)d_j^2\}\bigr|\\
&\ge\bigl|\{y\in 4\sqrt n\,Q_j:u(y)\ge
u(\hat x)+(y-\hat x)\cdot\n\Gm(\hat x)-C R^{\sigma-2}(f(\hat x)+J_{\sigma}(R)|\n\Gm(\hat x)|)r^2\}\bigr|\\
&\ge(1-\ep)\bigl|S_r(\hat x)\bigr|\ge\xi_0
|Q_j|.\end{split}\end{equation*} Thus we proved (f). Moreover, (e)
holds for $Q_j$ because $\overline Q_j\subset B_{r/2}(\hat x)$ and
$B_r(\hat x)\subset 4\sqrt n\,Q_j$. Hence the cube $Q_j$ would not
split and the process must stop there. \qed

\subsection{A-B-P estimate for a class $\cS^{\fL_0}_{\eta}$ : $0<\sigma < 2$}\label{sec-abp-l}
For $\cI \in \cS^{\fL_0}_{\eta}$, there are $ \cB^{\pm}_R$ as
Definition \ref{def-S-eta}. Set
$$\cC^{\pm}_{\eta}(u,\Gm,B_R)=\{x\in\cC(u,\Gm,B_R):  \cB^{\pm}_R(\n\Gm (x))\leq 0\}$$
for a concave envelope $\Gm $ of $u$ in $B_{2R}$.

\begin{lemma}\label{lem-6.7} Let $0<\sigma<2$, $I\in \cS^{\fL_0}_{\eta}$, and $0<R\le R_0$.
Let $u\le 0$ in $\BR^n\s B_R$ and let $\Gm$ be its concave envelope
in $B_{2R}$. If $\,u\in \rB(\BR^n)$ is a viscosity subsolution to
$\cI u=-f$ on $B_R$ where $f:\BR^n\to\BR$ is a function with $f>0$
on $\cC(u,\Gm,B_R)$, then there exist constants $C>0$ depending only
on $n,\ld$ and $\Ld$ $($but not on $\sm$$)$ such that for any
$x\in\cC^+_{\eta}(u,\Gm,B_R)$ and $M>0$ there is some
$k\in\BN\cup\{0\}$ such that
\begin{equation}\label{eq-6.5}
\bigl|\{y\in R_k: \mu^-(u,x,y;\n\Gm)\geq  M_0 r_k^2\}\bigr|\le
C\f{R^{\sigma-2}(f(x)+R^{1-\sigma}|\n\Gm(x)|)}{M}|R_k|
\end{equation} where $R_k=B_{r_k}\s B_{r_{k+1}}$ for $r_k=\vr_0 2^{-\f{1}{2-\sm}-k}R$ and
$\vr_0=1/(16\sqrt n)$. Here, $\n\Gm(x)$ denotes any element of the
superdifferential $\pa\Gm(x)$ of $\,\Gm$ at $x$.
\end{lemma}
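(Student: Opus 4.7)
The plan is to rerun the proof of Lemma \ref{lem-6.1} with two substitutions: replace the full maximal operator $\cM^+_{\fL_0}$ by its $R$-cutoff version $\cM^+_{\fL_0,R}$ via Definition \ref{def-S-eta}(3), and restrict attention to the set $\cC^+_\eta(u,\Gm,B_R)$ on which the drift functional $\cB^+_R(\n\Gm)$ is nonpositive. For $x\in\cC^+_\eta(u,\Gm,B_R)$, the concave envelope $\Gm$ touches $u$ from above at $x$, so one can produce a test function $\vp\in\rC^2_{B_R}(u;x)^+$ with $\n\vp(x)=\n\Gm(x)$; the viscosity inequality $\cI u\ge-f$ combined with the subclass bound then gives
$$-f(x)-\cI 0(x)\le\cM^+_{\fL_0,R}u(x;\n\Gm)+\cB^+_R(\n\Gm(x))+(2-\sm)CR^{1-\sm}|\n\Gm(x)|.$$
The very definition of $\cC^+_\eta$ kills the middle term. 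This is the essential gain over Lemma \ref{lem-6.1}: the residual gradient contribution is of order $R^{1-\sm}|\n\Gm(x)|$ instead of $J_\sm(R)|\n\Gm(x)|$, so no logarithmic blow-up appears when $0<\sm\le 1$.

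The next step is to bound $\cM^+_{\fL_0,R}u(x;\n\Gm)$ from above. Since $\mu_R$ subtracts the linear correction only inside $B_R$, concavity of $\Gm$ on $B_{2R}$ yields $\mu_R(u,x,y;\n\Gm)\le 0$ for $|y|<R$, while on $\{|y|\ge R\}\cap\{|x+y|\ge R\}$ one has $u(x+y)\le 0\le u(x)$, so $\mu_R^+=0$. The only positive contribution therefore comes from the thin annulus $\{R\le|y|<2R\}\cap\{|x+y|<R\}$, on which concavity again yields $\mu_R^+\le(\n\Gm(x)\cdot y)^+\le|\n\Gm(x)||y|$; integrating against $(2-\sm)\Ld/|y|^{n+\sm}$ produces a bound of order $(2-\sm)R^{1-\sm}|\n\Gm(x)|$, uniformly in $\sm\in(0,2)$. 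Combining this with the previous display, and noting that the rings $R_k\subset B_{r_0}\subset B_R$ lie in the regime where $\mu_R$ and $\mu$ agree, yields
$$f(x)+CR^{1-\sm}|\n\Gm(x)|\ge(2-\sm)\ld\sum_{k=0}^{\iy}\int_{R_k}\f{\mu^-(u,x,y;\n\Gm)}{|y|^{n+\sm}}\,dy,$$
which is precisely the analogue of the key bound \eqref{eq-6.2} with $J_\sm(R)$ replaced by $R^{1-\sm}$.

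From this point the contradiction argument is formally identical to that of Lemma \ref{lem-6.1}: assuming that \eqref{eq-6.5} fails for every $k\in\BN\cup\{0\}$ gives a geometric sum $\sum_k r_k^{2-\sm}$ whose total, using the uniform-in-$\sm$ bound $(2-\sm)/(1-2^{-(2-\sm)})\gc c$, would force $f(x)+CR^{1-\sm}|\n\Gm(x)|\ge C'(f(x)+R^{1-\sm}|\n\Gm(x)|)$, contradicting the display once $C$ is chosen large. The main points requiring care are: the production of a test function $\vp$ with $\n\vp(x)$ equal to a chosen element of the supergradient $\pa\Gm(x)$, so that $\cB^+_R(\n\Gm(x))$ is unambiguously defined and the subclass inequality of Definition \ref{def-S-eta}(3) applies at $x$; and the absorption of the term $\cI 0(x)$, which is bounded and may be incorporated into the source $f$ without altering the structure of the argument.
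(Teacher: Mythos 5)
Your proof is correct and follows the same basic route as the paper: apply the subclass inequality of Definition~\ref{def-S-eta}(3) at a contact point, drop $\cB^+_R(\n\Gm(x))$ because it is nonpositive on $\cC^+_\eta(u,\Gm,B_R)$, and rerun the dyadic-ring contradiction from Lemma~\ref{lem-6.1} with $J_\sm(R)$ replaced by $R^{1-\sm}$. You are, however, more careful than the paper at one specific step, and your care is warranted. The paper's proof asserts $\mu^+_R(u,x,\,\cdot\,;\n\Gm)=0$ on all of $\BR^n$, which does not follow from the stated hypothesis $u\le 0$ in $\BR^n\s B_R$: if $x$ is a contact point near $\pa B_R$ and $R\le |y|<2R$ with $|x+y|<R$, then $\mu_R(u,x,y;\n\Gm)=u(x+y)-u(x)$, and since $x+y\in B_R\subset B_{2R}$ concavity gives only $u(x+y)\le\Gm(x)+\n\Gm(x)\cdot y$, i.e.\ $\mu_R\le\n\Gm(x)\cdot y$, which need not be $\le 0$. (The paper's claim would be legitimate under the stronger convention stated in the preamble of Section~\ref{sec-6} but not in the lemma, namely $u\le 0$ outside $B_{R/2}$; then every contact point with $\Gm>0$ lies in $B_{R/2}$, so $|x+y|\ge R/2$ and $u(x+y)\le 0\le u(x)$ whenever $|y|\ge R$.) You correctly isolate the offending annulus $\{R\le |y|<2R\}\cap\{|x+y|<R\}$, bound $\mu^+_R$ there by $|\n\Gm(x)|\,|y|$ via concavity, and observe that integrating against $(2-\sm)\Ld/|y|^{n+\sm}$ yields a contribution of order $(2-\sm)R^{1-\sm}|\n\Gm(x)|$ uniformly in $\sm\in(0,2)$, which is of the same order as the cutoff correction already present in Definition~\ref{def-S-eta} and is therefore absorbed without changing the conclusion. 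Your remarks about producing a test function with $\n\vp(x)$ equal to a chosen supergradient, and about absorbing $\cI 0(x)$ into the source, are also consistent with the paper's usage.
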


\pf Take any $x\in\cC^+_{\eta}(u,\Gm,B_R)$. From the definition of
$\cS^{\fL_0}_{\eta}$,  we have that
\begin{equation*}\begin{split}
\cI u(x;\n\Gm)&\leq \cM^+_{\fL_0,R} u(x;\n\Gm)+\cB^+_{R}\left(\n \Gm(x)\right )+(2-\sigma)CR^{1-\sigma}|\n\Gm(x)|\\
&\leq \cM^+_{\fL_0,R} u(x;\n\Gm)+(2-\sigma)CR^{1-\sigma}|\n\Gm(x)|
\end{split}\end{equation*}
because $\cB^+_{R}\left(\n \Gm (x)\right )\leq  0$ from the
assumption and $\mu^+_R(u,x,\,\cdot\,;\n\Gm)=0$ on $\BR^n$. The
conclusion comes from similar arguments as Lemma \ref{lem-6.1}.

\qed

Now we have the following Corollary as Section \ref{sec-abp-blow}.
\begin{cor}\label{cor-6.8} Let $\,u$ and $\Gm$ be functions as in
Lemma \ref{lem-6.7}. Then there exist a finite family
$\{Q_j\}_{j=1}^m$ of open cubes $Q_j$ with diameters $d_j$ such that

$(a)$ Any two cubes $Q_i$ and $Q_j$ do not intersect, $(b)$
$\tQ_j=\cC^+_{\eta}(u,\Gm,B_R)\cap\overline Q_j\neq\phi$ for any
$j$,

$(c)$ $\cC^+_{\eta}(u,\Gm,B_R)\subset\bigcup_{j=1}^m\tQ_j$, $(d)$
$d_j\le\vr_0 2^{-\f{1}{2-\sm}}R$ where $\vr_0=1/(16\sqrt n)$,

$(e)$ $\int_{\overline Q_j}g_{\xi}(\n\Gm (y))\det (D^2
\Gm(y))^-\,dy\le C R^{n(\sigma-2)}(\sup_{\overline Q_j}(
R^{n(1-\sigma)}+\xi^{-n}|f|^n) |\tQ_j|$,

$(f)$ $|\{y\in 4\sqrt n\,Q_j:u(y)\ge\Gm(y)-C
R^{(\sigma-2)}(\sup_{\overline Q_j} (f+R^{1-\sigma}|\n\Gm|)
d_j^2\}|\ge\xi_0 |Q_j|$,

\noindent where the constants $C>0$ and $\xi_0>0$ depend on $n,\Ld$
and $\ld$ $($ but not on $\sm$$)$.
\end{cor}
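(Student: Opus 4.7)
The plan is to mimic the proof of Theorem \ref{thm-6.4} step by step, with Lemma \ref{lem-6.7} replacing Lemma \ref{lem-6.1} and with all contact points restricted to $\cC^+_{\eta}(u,\Gm,B_R)$ rather than $\cC(u,\Gm,B_R)$. First I will record a pointwise version analogous to Corollary \ref{cor-6.3}: applying Lemma \ref{lem-6.7} with $M=CR^{\sigma-2}(f(x)+R^{1-\sm}|\n\Gm(x)|)/\ep$ at any $x\in\cC^+_{\eta}(u,\Gm,B_R)$ gives some $r\in(0,\vr_0 2^{-1/(2-\sm)}R)$ with
$$
\frac{|\{y\in S_r(x):u(y)<u(x)+(y-x)\cdot\n\Gm(x)-C R^{\sigma-2}(f(x)+R^{1-\sm}|\n\Gm(x)|)r^2\}|}{|S_r(x)|}\le\ep,
$$
and, from the Hessian-determinant inequality $\det[D^2\Gm(x)]^-\le C(R^{\sigma-2}f(x)+R^{\sigma-2}R^{1-\sigma}|\n\Gm(x)|)^n$ together with approximation by $C^2$-concave functions (as in Corollary \ref{cor-6.3}), the second integral estimate needed for (e) with $R^{n(1-\sigma)}$ in place of $J_{\sigma}(R)^n$.

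Next I will construct the family $\{Q_j\}$ exactly as in Theorem \ref{thm-6.4}: tile $B_R$ by cubes of diameter $\vr_0 2^{-1/(2-\sm)}R$, discard any whose closure misses $\cC^+_{\eta}(u,\Gm,B_R)$, and recursively bisect those which fail to satisfy (e) or (f). Properties (a)--(d) follow automatically from the bisection scheme, and what must be shown is that the procedure terminates after finitely many steps. Assuming the contrary, one obtains an infinite nested sequence of cubes whose closures shrink to a point $\hat x$, and one must derive a contradiction at $\hat x$ by showing that some cube containing $\hat x$ must in fact satisfy (e) and (f).

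The main obstacle is to verify that the limit point $\hat x$ lies in $\cC^+_{\eta}(u,\Gm,B_R)$, not merely in $\cC(u,\Gm,B_R)$; this is the new ingredient compared to Theorem \ref{thm-6.4}. Upper semicontinuity of $u$ on $\overline B_R$ and $u\le\Gm$ on $B_{2R}$ give $u(\hat x)=\Gm(\hat x)$ as before. Picking $x_k\in\tQ_{j_k}$ with $x_k\to\hat x$, each $\n\Gm(x_k)$ is a superdifferential element with $\cB^+_R(\n\Gm(x_k))\le 0$; by boundedness of $\partial\Gm$ on compact subsets of $B_{2R}$ I can pass to a subsequence whose gradients converge to some $p\in\partial\Gm(\hat x)$ (using upper semicontinuity of the concave superdifferential), and the continuity and homogeneity of $\cB^+_R$ then yield $\cB^+_R(p)\le 0$. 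Choosing this particular $p$ as $\n\Gm(\hat x)$ places $\hat x\in\cC^+_{\eta}(u,\Gm,B_R)$, so the analog of Corollary \ref{cor-6.3} is available at $\hat x$.

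Once this is in place, the argument finishes exactly as in Theorem \ref{thm-6.4}: the analog of Corollary \ref{cor-6.3} at $\hat x$ supplies a radius $r$ and hence a cube $Q_j\subset B_{r/4}(\hat x)$ of diameter $d_j\in(r/4,r/2)$ containing $\hat x$; property (e) holds on $Q_j$ by the integral estimate recorded above, while (f) is obtained by combining the measure estimate on $S_r(\hat x)$ with the concavity bound $\Gm(y)\le u(\hat x)+(y-\hat x)\cdot\n\Gm(\hat x)$ on $B_{2R}$ and the inclusions $\overline Q_j\subset B_{r/2}(\hat x)\subset B_r(\hat x)\subset 4\sqrt n\,Q_j$. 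Therefore this cube would not be split, contradicting the hypothesis and completing the proof.
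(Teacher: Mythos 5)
Your proposal follows exactly the route the paper intends: the paper gives no explicit proof of Corollary~\ref{cor-6.8}, merely asserting it ``as Section \ref{sec-abp-blow},'' i.e.\ by repeating the Calder\'on--Zygmund-type bisection of Theorem~\ref{thm-6.4} with Lemma~\ref{lem-6.7} in place of Lemma~\ref{lem-6.1} and with $\cC^+_{\eta}(u,\Gm,B_R)$ in place of $\cC(u,\Gm,B_R)$. Your pointwise analogue of Corollary~\ref{cor-6.3}, the tiling-and-bisection construction, and the contradiction at the limit point are all in the intended spirit.

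The one genuinely new step --- verifying that the limit point $\hat x$ of the nested cubes belongs to $\cC^+_{\eta}(u,\Gm,B_R)$, not just to $\cC(u,\Gm,B_R)$ --- is a real gap left by the paper, and you handle it correctly: closedness and local boundedness of the superdifferential graph of the concave $\Gm$ give a subsequential limit $p\in\partial\Gm(\hat x)$ of the $\n\Gm(x_k)$, and then $\cB^+_R(p)\le 0$ follows. Do note, though, that this last step uses continuity (or at least lower semicontinuity) of $\cB^+_R$ on $\BR^n$, which is not listed explicitly in Definition~\ref{def-S-eta}; it holds for every example in Lemmas~\ref{lem-subclass-1}--\ref{lem-subclass-2} (all the $\cB^+_R$ there are suprema of finitely many linear functionals or are identically zero), so it is clearly what the authors intend, but it is worth flagging as an implicit hypothesis.
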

\subsection{Discussion of A-B-P estimates}
At this subsection, we are going to discuss motivations and
differences of the A-B-P estimates at previous subsections.
\begin{remark}
\item
\begin{enumerate}
\item Key setp in  the A-B-P estimate is the control of the volume of
  the gradient image, $|\n \Gamma (B_R)|$,  in terms of $\frac{M_0}{R}$
  for $M_0=\sup_{B_R}u$. From the concavity $\Gamma(x)$, we have
  $B_{\frac{M_0}{R} }\subset\n \Gamma (B_R)=\n \Gamma
  (\cC(u,\Gm,B_R))$ and then $\omega_n\left(
    \frac{M_0}{R} \right)^n\leq |\n\Gamma (\cC(u,\Gm,B_R;b_R))|$ $($see Lemma 9.2, \cite{GT}$)$.
Similarly for $\cB^{\pm}_R\neq 0$, we have $\{z\in B_{\frac{M_0}{R} } :\cB^{\pm}_R(z)\leq 0\}\subset\n \Gamma
  (\cC^+_{\eta}(u,\Gm,B_R))$ and then $\eta\omega_n\left(
    \frac{M_0}{R} \right)^n\leq |\n\Gamma (\cC^+_{\eta}(u,\Gm,B_R))|$.
\item The different A-B-P estimates have been considered to control
  the effect of the gradient term $\n \Gamma (x)$ caused by the fact
  that the Kernel is not symmetric. And they will be used at Lemma
  \ref{lem-7.4} to prove the decay estimate of the upper level set of
  super-solutions.
When we apply A-B-P estimate with blow-up rate, we have an extra term
$(R^{\sigma-2}J_{\sigma})^n|B_R|\approx 1$ (for $1<\sigma<2$), $-\log
(R)$ (for $\sigma=1$), and $R^{\sigma-1}$ (for $0<\sigma<1$) caused by
$\n \Gamma (x)$.
It is bounded only at $1<\sigma<2$. This is the main reason that we
another  A-B-P estimates.
\item For the operators in $\cS_{\eta}$,
 $(R^{\sigma-2}R^{1-\sigma})^n|B_R|\approx 1$ (for $0<\sigma<2$) gives us the decay estimate, Lemma \ref{lem-7.4}.
 \item For $1<\sigma<2$, Lemma \ref{lem-6.1} and Lemma \ref{lem-6.7} are equal since $J_{\sigma}(R)\approx R^{1-\sigma}$ for $0<R\leq 1$.
\end{enumerate}
\end{remark}
\section{Decay Estimate of Upper Level Sets}\label{sec-7}

In this section, we are going to show the geometric decay rate of the
upper level set of nonnegative solution $u$. The key Lemma \ref{lem-7.4}
says that if  a nonnegative function $u$ has a value smaller than one
in $Q_R$
then the lower level set $\{x: u\leq M\}$ has uniformly positive amount of
measure $\nu |Q_R|$ which will be proven through ABP estimate.
But the assumption of ABP estimate on a subsolution  requires its
special shape: it should be negative out side of $Q_R$ and positive at some
interior point. So we are going to construct a special function $\Psi$
so that $\Psi-u$ meets the requirement of ABP estimate.

\subsection{Special functions}

The construction of the special function is based on the idea in
\cite{CS, KL}. Nontrivial finer computation has been done to take care
of the influence of the gradient term and the lack of scaling.

\begin{lemma}\label{lem-7.1} There exist some
$\sm^*\in (0,2)$ and $p>0$ such that the function
$$f(x)=\min\{2^pR^{-p},|x|^{-p}\}$$ is a subsolution to $\cM^-_{\fL_0} f(x)\ge 0
$ for any $\sm\in (\sm^*,2)$ and $x\in B_R^c$.\end{lemma}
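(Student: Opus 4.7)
The plan is to follow a Caffarelli--Silvestre-style barrier argument, carefully adapted to the nonsymmetric kernel. By the rotational invariance of $\fL_0$, I would reduce to $x = re_1$ with $r \ge R$; then $f(x) = r^{-p}$, $\nabla f(x) = -pr^{-p-1}e_1$, and $f$ coincides with the smooth function $g(z) := |z|^{-p}$ on $B_{R/2}^c \supset B_{r/2}(x)$. I would split the defining integral $\cM^-_{\fL_0}f(x) = I_{\mathrm{in}} + I_{\mathrm{out}}$, with $I_{\mathrm{in}}$ over $|y|\le r/2$ and $I_{\mathrm{out}}$ over $|y|>r/2$, and estimate the two pieces separately.

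For $I_{\mathrm{in}}$, Taylor's theorem reduces matters to the Hessian $D^2 g(x) = r^{-p-2}[p(p+2)e_1 e_1^T - p\,I]$, with eigenvalue $p(p+1)r^{-p-2}$ radially and $-pr^{-p-2}$ on the $(n-1)$-dimensional tangent space. Passing to spherical coordinates $y = \rho\omega$ gives $y^T D^2 g(x) y = \rho^2 r^{-p-2} p[(p+2)\omega_1^2 - 1]$, which is positive on the large cap $\{\omega_1^2 > 1/(p+2)\}$. Using the crucial cancellation $(2-\sigma)\int_0^{r/2}\rho^{1-\sigma}d\rho = (r/2)^{2-\sigma}$ together with the $\lambda\mu^+-\Lambda\mu^-$ structure, I expect
\[
I_{\mathrm{in}} \;\ge\; \frac{(r/2)^{2-\sigma}}{2}\,p\,r^{-p-2}\bigl[\lambda J^+(p) - \Lambda J^-(p)\bigr] \;\ge\; c_1(n,\lambda,\Lambda)\,p^2\,r^{-p-\sigma},
\]
once $p$ is large enough that $J^+(p) \sim p|S^{n-1}|/n$ dominates the equatorial remainder $J^-(p) = O(p^{-1/2})$. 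The Taylor remainder contributes at most $C(2-\sigma)p^3 r^{-p-\sigma}$, which is absorbed for $\sigma$ close to $2$.

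For $I_{\mathrm{out}}$, rather than sharply controlling $\mu^+$ near the cap region $\{|x+y|<R/2\}$, I would use only the one-sided estimate $I_{\mathrm{out}} \ge -\Lambda(2-\sigma)\int_{|y|>r/2}\mu^-/|y|^{n+\sigma}\,dy$. Since $f\ge 0$,
\[
\mu^-(f,x,y) \;\le\; f(x) + \bigl|\nabla f(x)\cdot y\bigr|\chi_{B_1}(y) \;\le\; r^{-p} + p\,r^{-p-1}|y|\chi_{B_1}(y)
\]
(the $f(x+y)$ term has a favorable sign and can be dropped), so two elementary spherical integrals yield $I_{\mathrm{out}} \ge -C(2-\sigma)\bigl[1 + p/(\sigma-1)\bigr]r^{-p-\sigma}$.

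Combining,
\[
\cM^-_{\fL_0}f(x) \;\ge\; \bigl[c_1 p^2 - C(2-\sigma)\bigl(p^3 + p/(\sigma-1) + 1\bigr)\bigr]r^{-p-\sigma}.
\]
I would first fix $p = p(n,\lambda,\Lambda)$ large enough for $c_1 p^2$ to be a definite positive number, and then choose $\sigma^* \in (0,2)$ close enough to $2$ (roughly $\sigma^* > 2 - c/(Cp)$) so that the bracket is nonnegative on $(\sigma^*,2)$. The hardest step is tracking the non-scale-invariance caused by $\chi_{B_1}(y)$ and the gradient term: one has to verify that the $(2-\sigma)$ prefactors on the $p/(\sigma-1)$ piece in $I_{\mathrm{out}}$ and on the $p^3$ factor in the Taylor remainder are indeed enough to be absorbed by $c_1 p^2$, uniformly in $r \ge R$ and in $R > 0$.
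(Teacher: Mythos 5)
Your plan follows the same barrier strategy as the paper: after rotating so $x$ is along an axis, expand the radial power barrier $g(z)=|z|^{-p}$ near $x$, split the Pucci integral into an inner piece where the Hessian yields a positive term of order $p^2$ and an outer piece bounded crudely by $-\Lambda\mu^-$, then fix $p$ large first and then $\sigma^*$ close to $2$ so the $(2-\sigma)$ prefactor absorbs all the negative contributions. The paper executes the same idea slightly differently: it first rescales to $x=e_n$, uses an explicit algebraic lower bound for $(1+t)^{-p/2}$ rather than Taylor with remainder, and introduces a second small cutoff radius $r$ chosen after $p$ (so the expansion is applied only on a tiny ball where the error is uniformly small), whereas you split at the fixed radius $|x|/2$. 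Both approaches close.

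One quantitative point you should flag. Your claim that the Taylor remainder contributes only $C(2-\sigma)p^3 r^{-p-\sigma}$ with $C$ universal is optimistic. On $|y|\le r/2$ the intermediate point $\zeta$ in the remainder satisfies $|\zeta|\ge r/2$, and $|D^3g(\zeta)|\lesssim p^3|\zeta|^{-p-3}\lesssim p^3 2^{p+3}r^{-p-3}$; so the remainder is really of size $C\,2^p p^3(2-\sigma)r^{-p-\sigma}$. Likewise your outer bound should carry a $2^{\sigma}$-type factor from $(r/2)^{-\sigma}$. None of this breaks the argument, since $p$ is fixed before $\sigma^*$, but the displayed threshold $\sigma^*>2-c/(Cp)$ should read $\sigma^*>2-c/(2^p p^3)$, i.e. $\sigma^*$ must be exponentially close to $2$ in $p$. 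The paper's device of introducing a separate small cutoff $r$ (rather than $|x|/2$) keeps the expansion error small without the $2^p$ loss on the inner piece, pushing the $2^p$-type constant entirely into the cruder outer estimate where it is harmless; your version pays it on both pieces. Also be explicit that the substitution of $\tfrac12 y^{\mathsf T}D^2g(x)y$ for $\mu$ inside the $\lambda\mu^+-\Lambda\mu^-$ structure costs only $(\lambda+\Lambda)$ times the remainder, since $\mu^{\pm}\le(\tfrac12 y^{\mathsf T}D^2g(x)y)^{\pm}+|R_2(y)|$; you gesture at this but it deserves a line.
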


\pf It is enough to show that there is some $\sm^*\in(1,2)$ so that
\begin{equation}\label{eq-7.1}
\cM^-_{\fL_0} f(x)\ge 0
\end{equation}
for $x=R_1e_n=(0,0,\cdots,0,R_1)\in\BR^n$; for every other $x$ with
$|x|=R_1\geq R$, the above inequality follows by rotation. In order
to prove \eqref{eq-7.1}, we use the following elementary inequality
that holds for any $a>b>0$ and $p>0$; $$(a+b)^{-p}\ge
a^{-p}\biggl(1-p\,\f{b}{a}+\frac{p(p+1)}{2!}\bigl(\frac{b}{a}\bigr)^2
-\frac{p(p+1)(p+2)}{3!}\bigl(\frac{b}{a}\bigr)^3\biggr).$$ Using
this inequality and $\mu(f,R_1e_n,y)=R_1^{-p}
\mu\left(f,e_n,\overline y\right) $ for $\overline{y}=y/R_1$, we
have that
\begin{equation}\label{eq-7.4}
\begin{split}\mu(f,e_n,\overline{y})&=|e_n+\overline{y}|^{-p}-1+p\,\overline{y}_n=(1+|\overline{y}|^2+2 \overline{y}_n)^{-p/2}-1+p\,\overline{y}_n\\
&\ge-\left(\f{p}{2}+1\right)|\overline{y}|^2+\f{p(p+2)}{2}\,\overline{y}_n |\overline{y}|^2+\f{|\overline{y}|^2}{(1+|\overline{y}|^2)^{p/2+1}}\\
&+\f{p(p+2)}{2}\f{\overline{y}_n^2}{(1+|\overline{y}|^2)^{p/2+2}}
-\f{p(p+2)(p+4)}{6}\f{\overline{y}_n^3}{(1+|\overline{y}|^2)^{p/2+3}} \\
\end{split}
\end{equation}
for any $y\in B_{\f12 R}$. We choose some sufficiently large $p>0$
so that
\begin{equation}\label{eq-7.3}
\f{p(p+2)}{2(1+r^2)^{p/2+2}}\int_{S^{n-1}} \theta_n^2 \,
d\sigma(\theta)+\f{\om_n}{(1+r^2)^{p/2+1}}-\biggl(\f{p}{2}+1\biggr)\om_n=\delta_0(r)>0
\end{equation} for any sufficiently small $r>0$.
Since
$\int_{S^{n-1}}\theta_n\,d\sm(\theta)=\int_{S^{n-1}}\theta_n^3\,d\sm(\theta)=0$,
it follows from \eqref{eq-3.1}, \eqref{eq-3.2}, \eqref{eq-7.4} and
\eqref{eq-7.3} that
\begin{equation*}
\begin{split}&\cM^-_{\fL_0}
f(e_n)\\&\ge(2-\sm)R_1^{-p}\biggl(\int_{\BR^n}\f{\ld\mu^+(f,e_n,y/R_1)}{|y|^{n+\sm}}\,dy-\int_{\BR^n}\f{\Ld\mu^-(f,e_n,y/R_1)}{|y|^{n+\sm}}\,dy\biggr)\\
&\ge(2-\sm)
R_1^{-p-\sigma}\biggl(\ld\,\int_{B_r}\f{\mu(f,e_n,y)}{|y|^{n+\sm}}\,dy
-\Ld\,\int_{\BR^n\s B_r}\f{\mu^-(f,e_n,y)}{|y|^{n+\sm}}\,dy\biggr)\\
&\ge(2-\sm) R_1^{-p-\sigma}\biggl(\f{\ld\dt_0(r)}{2-\sm}
-(2^p+1)\Ld\int_{\BR^n\s B_r}\f{1}{|y|^{n+\sm}}\,dy-p\Ld\int_{\BR^n\s B_r}\f{1}{|y|^{n-1+\sm}}\,dy\biggr)\\
&=R_1^{-p-\sigma}\biggl(\ld\delta_0(r)-(2^p+1)\Ld\om_n\f{2-\sm}{\sm}
\,r^{-\sm}-p\Ld\om_n\f{2-\sm}{\sm}
\,r^{1-\sm}\biggr)
\end{split}
\end{equation*}
 for $r\in (0,\f12R)$, where $\om_n$
denotes the surface measure of $S^{n-1}$. Thus we may take some
sufficiently small $r\in (0,1/2)$ and take some $\sm^*\in (1,2)$
close enough to $2$ in the above so that
$$\cM^-_{\fL_0} f(e_n)\ge 0$$ for any $\sm\in
(\sm^*,2)$. Hence we complete the proof. \qed

Now we have the following Corollary as Corollary 4.1.2, \cite{KL}. The only difference is that the influence of non-symmetry of the kernel is $p|y|\chi_{B_1}(y)$, not $p|y|$ in the proof since authors considered a larger class of operators for $1<\sigma<2$ at \cite{KL}.
\begin{cor}\label{cor-7.2}   Given any $\sm_0\in(1,2)$, there exist some $\dt>0$ and $p>0$ such that the function
$$f(x)=\min\{\delta^{-p}R^{-p},|x|^{-p}\}$$ is a subsolution to $\cM^-_{\fL_0} f(x)\ge 0
$ for any $\sm\in (\sm_0,2)$ and $x\in B_{R/4}^c$.
\end{cor}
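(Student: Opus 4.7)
The proof follows the same template as Lemma \ref{lem-7.1}. By rotation invariance it suffices to show $\cM^-_{\fL_0}f(R_1e_n)\ge 0$ for every $R_1\ge R/4$. First fix $\dt<1/4$ so that $f(z)=|z|^{-p}$ on $\{|z|\ge R/4\}$; in particular $f$ is smooth at $R_1e_n$ with $\n f(R_1e_n)=-pR_1^{-p-1}e_n$, and in the rescaled coordinates $\bar y=y/R_1$ the cap region $\{|z|\le\dt R\}$ sits in a ball of radius at most $4\dt$ around $-e_n$, hence stays away from both $0$ and $e_n$.

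Next split the defining integral at $|y|=R_1 r$ for a small $r\in(0,1-4\dt)$. On the inner piece the Taylor expansion of $|R_1e_n+y|^{-p}$ together with the rescaling $y=R_1\bar y$ produces the same positive lower bound as in Lemma \ref{lem-7.1}, namely $\ld\,\dt_0(r)R_1^{-p-\sm}$ with $\dt_0(r)>0$ for $p>n-2$ and small $r$. For the outer piece the pointwise estimate
\[
\mu^-(f,R_1e_n,y)\;\le\;R_1^{-p}+pR_1^{-p-1}|y|\chi_{B_1}(y),
\]
which follows from $f\ge0$ and $|\n f(R_1e_n)|=pR_1^{-p-1}$, integrates to $R_1^{-p-\sm}\om_n\bigl[r^{-\sm}/\sm+pr^{1-\sm}/(\sm-1)\bigr]$. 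The gradient contribution converges precisely because $\sm>\sm_0>1$, which is the role of the $p|y|\chi_{B_1}(y)$ correction highlighted in the remark before the statement. Combining these two bounds gives
\[
\cM^-_{\fL_0}f(R_1e_n)\;\ge\;R_1^{-p-\sm}\Bigl[\ld\,\dt_0(r)-(2-\sm)\Ld\om_n\bigl(r^{-\sm}/\sm+pr^{1-\sm}/(\sm-1)\bigr)\Bigr].
\]
Since the quantities $(2-\sm)$, $1/(\sm-1)$, $r^{-\sm}$, $r^{1-\sm}$ are each monotone on $\sm\in(\sm_0,2)$ and controlled by constants depending only on $r,p,\sm_0$, the bracket admits a lower bound uniform in $\sm$.

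The remaining step is to choose $p$ and $r$ so that this uniform lower bound is nonnegative. Using $\dt_0(0)=p(p+2-n)\om_n/(2n)$ and the asymptotics $(1+c^2/p)^{p/2+k}\to e^{c^2/2}$, one checks that $\dt_0(r)\gs p^2$ as soon as $p$ is large enough and $r\le c/\sqrt{p}$ for a small absolute $c$; on the same scale the tail grows only like $p^{(\sm_0+1)/2}$ (the dominant contribution being the gradient piece $pr^{1-\sm_0}/(\sm_0-1)$), which is strictly slower than $p^2$ because $\sm_0<3$. Thus taking $p$ sufficiently large depending on $\sm_0,n,\ld,\Ld$, together with $r=c/\sqrt{p}$ and any $\dt$ small enough that $r<1-4\dt$, forces $\ld\,\dt_0(r)>(2-\sm)\Ld\om_n\bigl[r^{-\sm}/\sm+pr^{1-\sm}/(\sm-1)\bigr]$ for every $\sm\in(\sm_0,2)$, and the corollary follows. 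The main obstacle is exactly this coordinated parameter choice; the lower bound $\sm_0>1$ is essential, as the gradient tail integral diverges at $\sm=1$.
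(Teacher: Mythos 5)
Your argument reproduces, in self-contained form, exactly the strategy the paper points to: the paper's own "proof" of Corollary \ref{cor-7.2} is a one-line reference to [KL, Cor.~4.1.2] plus the remark about $p|y|\chi_{B_1}(y)$, and your write-up supplies the underlying computation by re-running Lemma \ref{lem-7.1} with the plateau shrunk to $B_{\dt R}$ (so that the rescaled cap $B_{4\dt}(-e_n)$ stays clear of $B_r$ once $r<1-4\dt$) and with the coordinated choice $p$ large, $r\sim p^{-1/2}$, $\dt$ small, which correctly makes $\dt_0(r)\gtrsim p^2$ beat the tail contribution $\lesssim p^{(\sm_0+1)/2}$ uniformly over $\sm\in(\sm_0,2)$. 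One small imprecision: the gradient tail $\int_{B_1\setminus B_{R_1 r}}|y|^{1-n-\sm}\,dy$ does not actually diverge at $\sm=1$ (the $\chi_{B_1}$ cutoff keeps it finite); what degenerates is the uniform-in-$\sm$ bound $pr^{1-\sm}/(\sm-1)$, which is why the hypothesis $\sm_0>1$ appears and why the paper switches to the exponential barrier of Lemma \ref{lem-7.2-2} for $\sm\le 1$.
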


\begin{lemma}\label{lem-7.2-2}   Given any $\sm\in(0,1]$, there exist $0<R_0<1$, $\dt>0$, $p>0$, and $\ep_0>0$  such that the function
$$f(x)=\exp (-p|x|^{\sigma/4})$$ is a subsolution to $\cM^-_{\fL_0} f(x)\ge 0
$ for any  $0<R<R_0$, and $x\in B_{2\sqrt{n}R}\backslash B_{R/4}$.
\end{lemma}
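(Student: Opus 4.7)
The plan is to follow the strategy of Lemma~\ref{lem-7.1}. Using rotational invariance of the class $\fL_0$, it suffices to verify $\cM^-_{\fL_0} f(x_0) \ge 0$ at a single radial point $x_0 = r_0 e_n$ for each $r_0 \in (R/4, 2\sqrt{n}R)$. Writing $\phi(r) = e^{-pr^{\sm/4}}$ so that $f = \phi(|\cdot|)$, I compute
\[
\phi'(r) = -p(\sm/4) r^{\sm/4-1}\phi(r), \qquad \phi''(r) = \bigl[p(\sm/4)(1-\sm/4) r^{\sm/4-2} + p^2(\sm/4)^2 r^{\sm/2-2}\bigr]\phi(r),
\]
both with known signs ($\phi'<0$, $\phi''>0$), and express
\[
\mu(f, x_0, y) = \phi(|x_0+y|) - \phi(r_0) - \phi'(r_0) y_n \chi_{B_1}(y).
\]

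I then split the integral defining $\cM^-_{\fL_0} f(x_0)$ into a near zone $|y|\le \ep_0 r_0$ and a far zone $|y| > \ep_0 r_0$. In the near zone, a second-order Taylor expansion of $\phi(|x_0+y|)$ about $r_0$, combined with the exact cancellation of the first-order term against the gradient correction (valid because $\ep_0 r_0 < 1$), yields
\[
\mu(f, x_0, y) = \tfrac12\Bigl[\phi''(r_0) y_n^2 + \tfrac{\phi'(r_0)}{r_0}(|y|^2-y_n^2)\Bigr] + O\bigl(|y|^3 \sup_{B_{\ep_0 r_0}(x_0)} \|D^3 f\|\bigr).
\]
In polar coordinates $y = \rho\theta$, the leading quadratic form reads $\tfrac{\rho^2}{2}(a\theta_n^2 - b)$ with $a = \phi''(r_0) + |\phi'(r_0)|/r_0$ and $b = |\phi'(r_0)|/r_0$, and the identity $a/b = (2 - \sm/4) + p(\sm/4) r_0^{\sm/4}$ governs the relative size of the ``good cone'' $\{\theta_n^2 > b/a\}$. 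Integrating against the extremal kernel and choosing $p$ large and $R_0$ small so that $a/b$ exceeds the threshold determined by $\Ld/\ld$ and $n$, I expect
\[
\int_{|y|\le \ep_0 r_0} \frac{\ld \mu^+ - \Ld \mu^-}{|y|^{n+\sm}}\, dy \ge c_0 \, a\, (\ep_0 r_0)^{2-\sm}
\]
for a constant $c_0 = c_0(n, \sm, \ld, \Ld) > 0$, with the cubic remainder absorbed by shrinking $\ep_0$.

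For the far zone I bound $|\mu| \le |\phi(|x_0+y|) - \phi(r_0)| + |\phi'(r_0)||y|\chi_{B_1}(y)$, together with $0\le\phi\le 1$ and the super-polynomial decay of $\phi$ at infinity. The intermediate piece $\ep_0 r_0 < |y| \le 1$ is controlled by a gradient-type estimate in the spirit of Lemma~\ref{lem-6.1}, producing a contribution of size $|\phi'(r_0)|\cdot J_\sm(R)$, while the genuine tail $|y|>1$ is bounded uniformly in $r_0$ by the integrability of $\phi(|x_0+y|)/|y|^{n+\sm}$. Both are of strictly lower order in $r_0$ than the near-zone lower bound, so by making $p$ large, then $R_0$ small, and finally $\ep_0$ and $\dt$ accordingly small, the near-zone positivity wins and $\cM^-_{\fL_0} f(x_0) \ge 0$ uniformly for $r_0$ in the annulus and $R\in(0,R_0)$.

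The main obstacle is the lack of scale invariance of the barrier. Unlike the power function $|x|^{-p}$ in Lemma~\ref{lem-7.1}, the exponential $e^{-p|x|^{\sm/4}}$ has no self-similar structure, so the quantities $a$, $b$, and $a/b$ vary non-trivially with $r_0$, and the positivity of the quadratic form must be verified uniformly over the full annulus for every $R\in(0,R_0)$. This difficulty is compounded by the slow decay of the kernel $|y|^{-n-\sm}$ in the regime $\sm\in(0,1]$, which makes the far-field tail a real competitor to the second-derivative contribution near $x_0$. The exponent $\sm/4$ inside the exponential is calibrated precisely to reconcile these two scales: fast enough decay at infinity to tame the tail integral, while remaining mild enough at $x_0$ that the Taylor expansion produces sufficient positivity when $p$ is chosen large.
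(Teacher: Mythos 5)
Your near-zone positivity claim has a real gap, and the positive contribution the paper relies on lives in exactly the region you treat as a perturbative error. Your quadratic form $\tfrac{\rho^2}{2}(a\theta_n^2 - b)$ produces a positive contribution to $\cM^-_{\fL_0}$ only if the angular average
$\int_{S^{n-1}}\bigl[\ld(a\theta_n^2-b)^+ - \Ld(a\theta_n^2-b)^-\bigr]\,d\theta$
is positive, and this requires $a/b$ to exceed a dimensional threshold: even in the Pucci-free case $\ld=\Ld$ the integral equals $\ld\,\om_n(a/n-b)$, so the threshold is $n$, and any gap $\Ld>\ld$ pushes it higher. You computed correctly that $a/b = (2-\sm/4) + p(\sm/4)r_0^{\sm/4}$, but the infimum of this quantity over $r_0 \in (R/4,\,2\sqrt n R)$ and $R\in(0,R_0)$ is $2-\sm/4<2\le n$. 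No choice of $p$ and $R_0$ helps: the variable term $p(\sm/4)r_0^{\sm/4}$ is killed in the limit $R\to 0$, which is always permitted since the lemma must hold for all $0<R<R_0$. So for $R$ small the near-zone angular average is strictly negative, and the near-zone integral is of order $-p\,\ep_0^{2-\sm}\,r_0^{\sm/4-\sm}$ with the wrong sign --- there is no positivity there to absorb a far-field error into.

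The paper's mechanism is different: the dominant positive term does not come from the Hessian at $x_0$ but from the spike of the barrier at the origin. Since $f=e^{-p|x|^{\sm/4}}$ attains its maximum $1$ at the origin while $f(x_0)=e^{-pr_0^{\sm/4}}$ and $|\phi'(r_0)|$ are exponentially small, the paper integrates $\mu(f,x_0,\cdot)$ over a shell $\{\dt r_0/2<|y+r_0e_n|<\dt r_0\}$ (so that $x_0+y$ is close to $0$) and extracts a positive contribution to $\cJ_1$ of order $(2-\sm)\ld\, r_0^{-\sm}\,e^{-p(\dt r_0)^{\sm/4}}$; meanwhile the whole of $\cJ_2$ is bounded using the exponential smallness of $f(x_0)$ and $|\phi'(r_0)|$. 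That positive piece scales like $r_0^{-\sm}$, which beats both the (possibly negative) near-zone term $\sim p\,r_0^{\sm/4-\sm}$ and your intermediate/tail bound $\sim|\phi'(r_0)|\,J_\sm(R)\sim p\,r_0^{\sm/4-\sm}$ as $r_0\to 0$; the remaining exponential prefactors are then balanced by taking $p$ large and $R_0$ small. By bounding $|\mu|$ from above in the whole far zone rather than retaining $\mu^+$ where $x_0+y$ approaches the origin, your proposal discards precisely this spike, so the argument has nothing left with which to fight the negative near-zone contribution. To repair it you would need to isolate the shell around $-r_0e_n$, keep $\ld\mu^+$ there, and show it dominates $\Ld\mu^-$ elsewhere --- which is essentially the structure of the paper's proof.
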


\begin{proof}
% First notice that
%\begin{equation}
%\begin{split}
%e^{-\beta|1+x|^{\alpha}}=&e^{-\beta}-\alpha\beta e^{-\beta}x-\frac12\alpha\beta(\alpha\beta-(\alpha-1))e^{-\beta}x^2\\
%&-\frac16 \alpha\beta((\alpha-1)(\alpha-2)+3\beta(1-\alpha)+a^2\beta^2)e^{-\beta}x^3\\
%&+\Big(\alpha\beta(\alpha-1)(\alpha-2)(\alpha-3)+\alpha^2\beta^2(\alpha-1)(\alpha-2)+3\alpha^2\beta^2(\alpha-1)(2\alpha-3)\\
%&+6\alpha^3\beta^3(\alpha-1)+\alpha^4\beta^4\Big)e^{-\beta}x^4.
%\end{split}
%\end{equation}
We consider $f(x)=\exp (-p|x|^{\alpha})$ for $\alpha>0$ to be
selected later. For any $x\in B_{2\sqrt{n}R}\backslash B_{R/4}$, we
may assume $x=R_1e_n$ for $R/4\le R_1<2\sqrt{n}R$ without losing
generality. Then we have
\begin{equation}
\begin{split}
\mu(f,R_1e_n,y)&=e^{-p|R_1e_n+y|^{\alpha}}-e^{-pR_1^{\alpha}}+\alpha pR_1^{\alpha-1}e^{-pR_1^{\alpha}}y_n\chi_{|y|<1}\\
&=e^{-p(|y|^2+R_1^2+2y_n)^{\alpha/2}}-e^{-pR_1^{\alpha}}+\alpha pR_1^{\alpha-1}e^{-pR_1^{\alpha}}y_n\chi_{|y|<1}\\
%&=e^{-p(|y|^2+R_1^2)^{\alpha/2}\left(1+\frac{2y_n}{|y|^2+R_1^2}\right)^{\alpha/2}}-e^{-pR_1^{\alpha}}+\alpha pR_1^{\alpha-1}e^{-pR_1^{\alpha}}y_n\chi_{|y|<1},\\
\end{split}
\end{equation}
and
\begin{equation*}\begin{split}\cM^-_{\fL_0}
f(R_1e_n)&=(2-\sm)\ld\int_{\BR^n}\f{\mu^+(f,R_1e_n,y)}{|y|^{n+\sm}}\,dy
-(2-\sm)\Ld\int_{\BR^n}\f{\mu^-(f,R_1e_n,y)}{|y|^{n+\sm}}\,dy\\
&\fd\cJ_1 f(R_1e_n)+\cJ_2 f(R_1e_n).\end{split}\end{equation*} We
note that $\mu^-(f,R_1e_n,y)=0$ for any $y\in B_{\rho( R_1)}$ and
$\rho(R_1)\geq  c_1 R_1$ for a uniform constant $c_1>0$ from simple
geometric observation. It is easy to check that
$\mu^-(f,R_1e_n,y)\le  e^{-pc_1^{\alpha}R_1^{\alpha}} +\frac{\alpha
p e^{-pc_1^{\alpha}R_1^{\alpha}} }{R_1^{1-\alpha}}|y|\chi_{B_1}(y)$
for any $y\in B_{\rho(R_1)}^c$ . So we see that
\begin{equation*}
\begin{split}
-\cJ_2 f(R_1e_n)&=(2-\sm)\Ld\int_{|y|\ge
\rho(R_1)}\f{\mu^-(f_{\dt},R_1e_n,y)}{|y|^{n+\sm}}\,dy\\
&\le(2-\sm_0)\Ld e^{-pc_1^{\alpha}R_1^{\alpha}}\int_{|y|\ge
\rho(R_1)  }\f{1+\frac{\alpha p}{R_1^{1-\alpha}}|y|\chi_{B_1}(y)}{|y|^{n+\sm}}\,dy.\\
&\leq  (2-\sm_0)\Ld e^{-pc_1^{\alpha}R_1^{\alpha}}\left(\frac{c_1}{R_1^{\sigma}}+\frac{\alpha pc_2}{R_1^{\sigma-\alpha}}\right).
\end{split}
\end{equation*}
Also we have that
\begin{equation*}
\begin{split}
\cJ_1 f(R_1e_n)&\ge(2-\sm)\ld\int_{\delta R_1/2<|y+Re_n|<\dt R_1}\f{ e^{-p\delta^{\alpha}R_1^{\alpha}}-e^{-pR_1^{\alpha}} -\alpha p R_1^{\alpha}e^{-pc_1^{\alpha}R_1^{\alpha}}  }{|y|^{n+\sm}}\,dy\\
&\quad\ge \frac{c_3(2-\sm)\ld}{R_1^{\sigma}} e^{-p\delta^{\alpha}R_1^{\alpha}}\quad\text{for a uniformly small $\delta>0$ and $c_3>0$.}
\end{split}
\end{equation*}
Set $\alpha=\sigma/4$ for a uniformly small $\ep_0$. There are a  large $p>0$ and small $R_0,\delta>0$ such that  if $0<R<R_0$, then we have $e^{-p c_1^{\alpha}R_1^{\alpha}}<< e^{-p\delta^{\alpha}R_1^{\alpha}}$ and
\begin{equation*}
\begin{split}
-\cJ_2 f(R_1e_n)&\leq  (2-\sm_0) e^{-pc_1^{\alpha}R_1^{\alpha}}\left(\frac{c_1}{R_1^{\sigma}}+\frac{\alpha pc_2}{R_1^{3\sigma/4}}\right)<<\cJ_1 f(R_1e_n)
\end{split}
\end{equation*}
for a uniformly small $\delta>0$ and $R/4\le R_1<\sqrt{n}R$.
\end{proof}

\begin{lemma}\label{lem-7.3} Given any $\sm_0\in (1,2)$ and $0<R<R_0$,  there exists a function $\Psi\in\rB(\BR^n)$ such that

$(a)$ $\Psi$ is continuous on $\BR^n$, $(b)$ $\Psi=0$ on
$B^c_{2\sqrt{n} R}\,$, $(c)$ $\Psi>2$ on $Q_{3R}$,

$(d)$ $\Psi\le M$ on $\BR^n$ for some $M>1$, $(e)$
$\cM^-_{\fL_0}\Psi$ is continuous on $B_{2\sqrt{n}R}\,$,

$(f)$ $\cM^-_{\fL_0}\Psi>-\psi/R^{\sigma}$ on $\BR^n$ where $\psi$
is a positive bounded function on $\BR^n$ which is supported in
$\overline B_{R/4}\,$, for any $\sm\in (\sm_0,2)$.\end{lemma}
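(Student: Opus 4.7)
The plan is to construct $\Psi$ as a scaled and truncated version of the radial barrier $g(x)=\min\{\dt^{-p}R^{-p},|x|^{-p}\}$ supplied by Corollary \ref{cor-7.2}, following the template of the analogous construction in \cite{CS,KL}. First, I fix $\dt,p>0$ from Corollary \ref{cor-7.2} so that $\cM^-_{\fL_0}g\ge 0$ on $B_{R/4}^c$ for every $\sm\in(\sm_0,2)$. I then replace the non-smooth $\min$ inside $B_{\dt R}$ by a radial $\rC^{1,1}$ interpolant $\tilde g$ that still satisfies $\tilde g(x)=|x|^{-p}$ on $B_{\dt R}^c$ and is bounded by $\dt^{-p}R^{-p}$; the barrier inequality $\cM^-_{\fL_0}\tilde g\ge 0$ on $B_{R/4}^c$ is preserved because only the values of $\tilde g$ inside $B_{\dt R}\subset B_{R/4}$ are altered and they remain uniformly bounded.

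Next, I set
\[
\Psi(x):=c\bigl[\,\tilde g(x)-(2\sqrt n\,R)^{-p}\,\bigr]^+,\qquad c:=\frac{2R^p}{c_0},
\]
where $c_0:=(3\sqrt n/2)^{-p}-(2\sqrt n)^{-p}>0$. Properties (a)--(d) are then immediate: continuity is inherited from $\tilde g$; the support condition $\Psi=0$ on $B_{2\sqrt n R}^c$ holds since $\tilde g(x)=|x|^{-p}<(2\sqrt n R)^{-p}$ there; on $Q_{3R}\subset B_{3\sqrt n R/2}$ one has $\tilde g(x)-(2\sqrt n R)^{-p}\ge R^{-p}c_0$, so $\Psi\ge 2$; and $\Psi\le c\,\dt^{-p}R^{-p}=2\dt^{-p}/c_0=:M$.

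To verify (f) outside $\overline{B}_{R/4}$: at any $x\in B_{R/4}^c$ with $\Psi(x)>0$, the function $\Psi-c\tilde g$ attains its global minimum $-c(2\sqrt n R)^{-p}$ at $x$, which yields the pointwise inequality $\mu(\Psi,x,y)\ge c\,\mu(\tilde g,x,y)$ for every $y\in\BR^n$ (note that $\n\Psi(x)=c\n\tilde g(x)$ since $\Psi$ coincides locally with $c\tilde g-c(2\sqrt n R)^{-p}$). Consequently $\mu^+(\Psi,x,y)\ge c\,\mu^+(\tilde g,x,y)$ and $\mu^-(\Psi,x,y)\le c\,\mu^-(\tilde g,x,y)$, and hence $\cM^-_{\fL_0}\Psi(x)\ge c\,\cM^-_{\fL_0}\tilde g(x)\ge 0$. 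If $\Psi\equiv 0$ in a neighborhood of $x$ (i.e.\ $x\in(\overline{B}_{2\sqrt n R})^c$), then $\n\Psi(x)=0$ and $\mu(\Psi,x,y)=\Psi(x+y)\ge 0$, so again $\cM^-_{\fL_0}\Psi(x)\ge 0$. On the sphere $|x|=2\sqrt n R$, any $\rC^2$ upper test function $\vp$ must satisfy $\n\vp(x)=0$ and $D^2\vp(x)\ge 0$ (because $\Psi\ge 0=\Psi(x)$ forces $\vp\ge 0=\vp(x)$ locally), and the viscosity-sense evaluation again yields $\cM^-_{\fL_0}\Psi(x;\n\vp)\ge 0$. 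For (e), $\Psi\in\rC^{1,1}[B_{2\sqrt n R}]$ is inherited from $\tilde g$ (the positive-part truncation is inactive in the interior), so $\cM^-_{\fL_0}\Psi$ is continuous on $B_{2\sqrt n R}$ by Definition \ref{def-3.1}(b). Finally, $\cM^-_{\fL_0}\Psi$ is bounded on $B_{R/4}$ by the same $\rC^{1,1}$ regularity, and one may take
\[
\psi(x):=R^{\sm}\bigl(\cM^-_{\fL_0}\Psi(x)\bigr)^{-}\chi_{\overline{B}_{R/4}}(x)+\chi_{\overline{B}_{R/4}}(x),
\]
which is positive, bounded, supported in $\overline{B}_{R/4}$, and fulfills (f).

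The main obstacle is the smoothness issue at the ``corner'' $|x|=\dt R$ of the raw barrier of Corollary \ref{cor-7.2}, which is only Lipschitz there and therefore does not immediately give continuity of $\cM^-_{\fL_0}\Psi$ inside $B_{2\sqrt n R}$; the $\rC^{1,1}$ interpolation must be carried out so as to preserve both the global bound $\tilde g\le\dt^{-p}R^{-p}$ and the identity $\tilde g(x)=|x|^{-p}$ on $B_{R/4}^c$, so that Corollary \ref{cor-7.2} continues to apply verbatim outside $B_{R/4}$. The other delicate point is the comparison $\cM^-_{\fL_0}\Psi\ge c\,\cM^-_{\fL_0}\tilde g$ on $\{\Psi>0\}$, which crucially exploits that $\Psi-c\tilde g+c(2\sqrt n R)^{-p}$ is nonnegative everywhere and vanishes exactly on $\{\Psi>0\}$; once this one-sided touching relation is recorded, the pointwise inequalities on $\mu^\pm$ propagate the barrier property of $\tilde g$ to $\Psi$ uniformly in $\sm\in(\sm_0,2)$, which is what the whole argument needs for the subsequent decay-of-level-sets estimate.
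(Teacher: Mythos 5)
Your construction is essentially the paper's: you start from the barrier $f(x)=\min\{\dt^{-p}R^{-p},|x|^{-p}\}$ of Corollary \ref{cor-7.2}, subtract the value at $|x|=2\sqrt n\,R$, truncate at zero, regularize to $\rC^{1,1}$ inside $B_{R/4}$ (the paper does this with a quadratic paraboloid $P$, you with a radial interpolant $\tilde g$), and then scale by $c$ to secure $(c)$ and $(d)$. The one thing you add that the paper leaves implicit is the touching argument showing $\cM^-_{\fL_0}\Psi\ge c\,\cM^-_{\fL_0}\tilde g$ on $\{\Psi>0\}$ (using that $\Psi-c\tilde g+c(2\sqrt n R)^{-p}\ge 0$ vanishes there, so $\n\Psi=c\n\tilde g$ and $\mu(\Psi,x,\cdot)\ge c\,\mu(\tilde g,x,\cdot)$), which is a welcome bit of rigor but not a different route.
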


\pf Let $f(x)=\min\{\delta^{-p}R^{-p},|x|^{-p}\}$. Then we consider
the function $\Psi$ given by
$$\Psi=c\,
\begin{cases} 0 &\text{ in $\BR^n\s B_{2\sqrt n R}$,}\\
f(|x|)- f(2\sqrt n R)&\text{ in $B_{2\sqrt n R}\s B_{R/4}$,}\\
P &\text{ in $B_{R/4}$,}
\end{cases}
$$
where $P$ is a quadratic paraboloid chosen so that $\Psi$ is
$\rC^{1,1}$ across $\pa B_{R/4}$. We now choose the constants $c$
and $\dt$ so that $\Psi(x)>2$ for $x\in Q_{3R}$ and $\Psi\le M$ on
$\BR^n$ for some $M>1$ (recall that $Q_{3R}\subset B_{3\sqrt n
R/2}\subset B_{2\sqrt n R}$). Since $\Psi\in\rC^{1,1}(B_{2\sqrt n
R})$, $\cM^-_{\fL_0}\Psi$ is continuous on $B_{2\sqrt n R}$. Also by
Lemma \ref{lem-7.1} we see that $\cM^-_{\fL_0}\Psi\ge 0$ on
$B^c_{R/4}$. Hence this completes the proof. \qed

\begin{lemma}\label{lem-7.3-2} Given any $\sm\in (0,1]$ and $0<R<R_0$,  there exist a function $\Psi\in\rB(\BR^n)$ such that

$(a)$ $\Psi$ is continuous on $\BR^n$, $(b)$ $\Psi=0$ on
$B^c_{2\sqrt{n} R}\,$, $(c)$ $\Psi>2$ on $Q_{3R}$,

$(d)$ $\Psi\le M$ on $\BR^n$ for some $M>1$, $(e)$
$\cM^-_{\fL_0}\Psi$ is continuous on $B_{2\sqrt{n}R}\,$,

$(f)$ $\cM^-_{\fL_0}\Psi>-\psi/R^{\sigma}$ on $\BR^n$ where $\psi$
is a positive bounded function on $\BR^n$ which is supported in
$\overline B_{R/4}\,$.\end{lemma}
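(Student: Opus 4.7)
The construction mirrors Lemma~\ref{lem-7.3}, but with the power-type subsolution used there replaced by the exponential subsolution from Lemma~\ref{lem-7.2-2}, since the former is only known to be $\cM^-_{\fL_0}$-subharmonic in the regime $1<\sm<2$. Given $\sm\in(0,1]$ and $0<R<R_0$, let $f(x)=\exp(-p|x|^{\sm/4})$ with $p$ chosen as in Lemma~\ref{lem-7.2-2}, so that $\cM^-_{\fL_0}f\ge 0$ on the annulus $B_{2\sqrt{n}R}\s B_{R/4}$. I would set
$$\Psi(x)=c\begin{cases} 0, & x\in\BR^n\s B_{2\sqrt{n}R},\\ f(|x|)-f(2\sqrt{n}R), & x\in B_{2\sqrt{n}R}\s B_{R/4},\\ P(x), & x\in B_{R/4},\end{cases}$$
where $P(x)=a+b|x|^2$ is a radial quadratic paraboloid whose coefficients $a,b$ are fixed by the condition that $\Psi$ is $\rC^{1,1}$ across $\pa B_{R/4}$, and $c>0$ is chosen large enough to guarantee $\Psi>2$ on $Q_{3R}\subset B_{3\sqrt{n}R/2}$ while keeping $\Psi$ globally bounded by some $M>1$ (this is feasible because $f(|x|)-f(2\sqrt{n}R)$ is bounded below by a positive constant on $Q_{3R}$ uniformly in small $R$).

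Properties (a)--(e) then follow exactly as in Lemma~\ref{lem-7.3}: continuity on $\BR^n$ is built into the matching across the two interfaces; (b)--(d) are immediate from the construction; and (e) holds because $\Psi\in\rC^{1,1}(B_{2\sqrt{n}R})$, so $\cM^-_{\fL_0}\Psi$ is continuous on $B_{2\sqrt{n}R}$ by standard pointwise evaluation. For (f) I would treat three regions separately. On $\BR^n\s B_{2\sqrt{n}R}$ we have $\Psi(x)=0$ and $\n\Psi(x)=0$ while $\Psi\ge 0$ globally, so $\mu(\Psi,x,\cdot)\ge 0$ forces $\cM^-_{\fL_0}\Psi(x)\ge 0$. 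On the annulus, once the paraboloid is tuned so that $P(y)\ge f(|y|)-f(2\sqrt{n}R)$ throughout $B_{R/4}$, we obtain $\Psi(y)\ge c(f(|y|)-f(2\sqrt{n}R))$ on all of $\BR^n$ with equality at $x$; since the two functions share the same gradient at $x$ (both equal to $c\n f(x)$ by construction in a neighborhood of $x$), a pointwise comparison of integrands yields $\mu(\Psi,x,y)\ge c\mu(f,x,y)$ for every $y$, and hence $\cM^-_{\fL_0}\Psi(x)\ge c\cM^-_{\fL_0}f(x)\ge 0$ by Lemma~\ref{lem-7.2-2}. On $B_{R/4}$, the fact that $\Psi=cP$ is smooth with bounded Hessian and compactly supported allows a direct estimate of the integrals defining $\cM^-_{\fL_0}\Psi(x)$ by separating $|y|<1$ and $|y|\ge 1$, yielding $|\cM^-_{\fL_0}\Psi(x)|\le C/R^{\sm}$; this produces the required bounded $\psi$ supported in $\overline{B_{R/4}}$.

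The main technical obstacle is arranging the paraboloid $P$ so that it both meets the $\rC^{1,1}$ matching at $\pa B_{R/4}$ and satisfies $P(y)\ge f(|y|)-f(2\sqrt{n}R)$ throughout $B_{R/4}$. The matching imposes two conditions on the two coefficients $a,b$; since $f$ is smooth and radially decreasing, $g(r):=f(r)-f(2\sqrt{n}R)$ has $g'(R/4)<0$, forcing $b<0$, and a direct Taylor comparison shows that the resulting radial paraboloid dominates $g$ on $[0,R/4]$ up to a harmless vertical adjustment (absorbed into the constant $c$ and the definition of $\psi$), exactly as in Lemma~\ref{lem-7.3}. Apart from this verification, the argument reduces to the same routine calculations carried out there, now driven by Lemma~\ref{lem-7.2-2} rather than Corollary~\ref{cor-7.2}.
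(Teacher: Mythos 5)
Your overall strategy — replacing the power‑type special function of Corollary~\ref{cor-7.2} by the exponential subsolution of Lemma~\ref{lem-7.2-2}, capping it with a radial quadratic across $\pa B_{R/4}$, and then verifying the six properties — is the same as the paper's, which phrases the key step ``$\cM^-_{\fL_0}\bigl[\max\{0,f(|x|)-f(2\sqrt{n}R)\}\bigr]\ge 0$ on $B^c_{R/4}$'' as an instance of ``the supremum of two subsolutions is a subsolution'' and then points to the $\Psi$‑construction of Lemma~\ref{lem-7.3}. Your region‑by‑region unpacking of that comparison is essentially the same argument.

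There is, however, a concrete error in the step you yourself single out as the main technical obstacle. You claim that the $\rC^{1,1}$‑matching radial paraboloid $P(x)=a+b|x|^2$ dominates $g(r)=f(r)-f(2\sqrt{n}R)$ on $[0,R/4]$, justified by ``a direct Taylor comparison.'' The sign is wrong. Writing $\alpha=\sm/4$, one computes
\[
g''(r)=p\alpha e^{-pr^{\alpha}}\bigl[(1-\alpha)r^{\alpha-2}+p\alpha\,r^{2\alpha-2}\bigr]>0,
\]
so $g$ is strictly convex on $(0,\infty)$ (in fact $g''\to+\infty$ as $r\to 0^+$), while $b=2g'(R/4)/R<0$ makes $P$ strictly concave. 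Matching value and slope at $r=R/4$ therefore gives, for $\xi$ between $r$ and $R/4$,
\[
P(r)-g(r)=\Bigl(b-\tfrac12 g''(\xi)\Bigr)(r-R/4)^2<0\quad\text{for }r\in[0,R/4),
\]
i.e.\ $P<g$ on $B_{R/4}\setminus\pa B_{R/4}$, the opposite of what your comparison argument needs. Since the nonlocal operator sees the values of $\Psi$ on all of $\BR^n$, the pointwise inequality $\mu(\Psi,x,y)\ge c\,\mu(f,x,y)$ that you invoke on the annulus requires $\Psi\ge c\,(f-f(2\sqrt{n}R))_+$ \emph{everywhere}, including inside $B_{R/4}$; this is precisely what the $\rC^{1,1}$-matched concave cap fails to deliver. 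A ``vertical adjustment'' to $P$ cannot repair this while preserving the continuity of $\Psi$ at $\pa B_{R/4}$, and the resulting error $\rho=(g-P)\chi_{B_{R/4}}\ge 0$ contributes a negative nonlocal term $-c\cL\rho(x)$ at every $x$ in the annulus, which is not supported in $\overline{B_{R/4}}$ and hence cannot simply be absorbed into $\psi$. This needs a genuine fix (for instance, capping with a convex touching paraboloid from above — which sacrifices gradient matching but leaves only a convex kink, harmless for the subsolution property — or a direct estimate of $\cM^-_{\fL_0}\Psi$ on the annulus rather than a comparison with $f$).

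A smaller slip: your feasibility remark that ``$f(|x|)-f(2\sqrt{n}R)$ is bounded below by a positive constant on $Q_{3R}$ uniformly in small $R$'' is not correct. By the mean value theorem the lower bound on $Q_{3R}$ scales like $p\alpha R^{\alpha}$ with $\alpha=\sm/4$, which tends to $0$ as $R\to 0$; the scaling constant $c$ (and hence $M$) must therefore be allowed to depend on $R$. The lemma's statement permits this, but the uniformity claim as written is false and could mislead the reader about the constants in the downstream Lemma~\ref{lem-7.4}.
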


\pf Let $f(x)=c\,\exp (-p|x|^{\sigma/4})$ as in Lemma
\ref{lem-7.2-2}.
 From Lemma \ref{lem-7.2-2}, we know that $\cM^-_{\fL_0}\bigl(f(|x|)-f(2\sqrt{n}R)\bigr)\ge 0$ on $B_{2\sqrt{n}R}\backslash B_{R/4}$.
 And the zero function is a solution of $\cM^-_{\fL_0} u=0$ on  $B^c_{R/4}$. Since the suprimum of two solutions is a subsolutions, we have
 $$\cM^-_{\fL_0}\left[\max\bigl\{0,f(|x|)-f(2\sqrt{n}R)\bigr\}\right]\ge 0\,\,\text{ on
 $B^c_{R/4}$.}$$
The conclusion comes from the  construction of  $\Psi$ as Lemma \ref{lem-7.3}.
 \qed

\subsection{Estimates in measure}

The main tool that shall be useful in proving H\"older estimates
is a lemma that connects a pointwise estimate with an estimate in
measure. The corresponding lemma in our context is the following.

\begin{lemma}\label{lem-7.4} Let $\sm_0\in (1,2)$ and assume that  $\,\sm\in
(0,1]$ or $\,\sm\in
(\sm_0,2)$.  If $R\in(0,R_0]$, then there exist some constants
$\vep_0>0$, $\nu\in (0,1)$ and $M>1$  for which if
$\,u\in\rB(\BR^n)$ is a viscosity supersolution to either $\cM^-_{\fL_0}
u\le\vep_0/R^{\sigma}$ with $\sigma> 1$  or $\cM^-_{\fL_0,R,\eta}
u\le\vep_0/R^{\sigma}$ with $\sigma\leq  1$,  $u\geq 0$ on $Q_{4\sqrt{n}R}$, and $\inf_{Q_{3R}}u\le 1$, then
 $\,|\{u\le M\}\cap Q_R|\ge\nu |Q_R|$.
 For $\sm\in (\sm_0,2)$, $\vep_0$, $\nu$ and $M$ depend only on $\ld,\Ld$ , the dimension $n$, and $\sm_0$. And for $\,\sm\in
(0,1]$ , $\vep_0$, $\nu$ and $M$ depend only on $\ld,\Ld$ , $n$, $\sm$, and $\eta$.
\end{lemma}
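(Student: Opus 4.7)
The plan is to follow the Caffarelli--Silvestre/Kim--Lee strategy: combine the special barrier $\Psi$ from Lemma~\ref{lem-7.3} (when $\sm\in(\sm_0,2)$) or Lemma~\ref{lem-7.3-2} (when $\sm\in(0,1]$) with the nonlocal ABP estimate from Theorem~\ref{thm-6.4} (respectively Corollary~\ref{cor-6.8} for the subclass $\cS^{\fL_0}_\eta$) to convert the pointwise hypothesis $\inf_{Q_{3R}}u\le 1$ into a positive-measure estimate on $\{u\le M\}\cap Q_R$.

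First I would set $v:=\Psi-u$. Properties (b)--(d) of $\Psi$ give $v(x_0)>1$ for some $x_0\in Q_{3R}$, $v\le 0$ outside $B_{2\sqrt{n}R}$ (using $u\ge 0$ on $Q_{4\sqrt{n}R}$), and $v\le M$. Using the standard inequality $\cM^+_{\fL_0}(f+g)\ge\cM^-_{\fL_0}f+\cM^+_{\fL_0}g$ together with property (f) of $\Psi$ and the supersolution hypothesis on $u$,
\[
\cM^+_{\fL_0}v\ge\cM^-_{\fL_0}\Psi-\cM^-_{\fL_0}u\ge-(\psi+\vep_0)/R^\sm,
\]
and similarly with $\cM^{\pm}_{\fL_0,R,\eta}$ in the case $\sm\in(0,1]$. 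So $v$ is a viscosity subsolution of a Pucci equation whose right hand side $f=(\psi+\vep_0)/R^\sm$ equals a background $\vep_0/R^\sm$ away from $\overline{B}_{R/4}$ and is bounded by $C/R^\sm$ on $\overline{B}_{R/4}$.

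Next I would apply ABP to $v$. Let $\Gm$ denote the concave envelope of $v^+$ in $B_{4\sqrt{n}R}$. Since $\Gm(x_0)>1$ while $\Gm$ vanishes at distance $\sim R$, a convexity argument shows $B_{c/R}\subseteq\nabla\Gm(\cC(v,\Gm,B_{R'}))$ in the subcritical case, and in the $\cS^{\fL_0}_\eta$ case $\{z\in B_{c/R}:\cB^+_R(z)\le 0\}\subseteq\nabla\Gm(\cC^+_\eta(v,\Gm,B_{R'}))$, of measure $\ge\eta|B_{c/R}|$. With the choice $\xi=1/R$ in the weight $g_\xi$ this yields
\[
\int_{\cC^{(+)}}g_\xi(\nabla\Gm)\det(D^2\Gm)^-\,dy\ \ge\ c_0>0,
\]
uniformly in $R\le R_0$ and $\sm$. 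On the other hand, property (e) of the covering $\{Q_j\}$ in Theorem~\ref{thm-6.4}/Corollary~\ref{cor-6.8} bounds the same integral from above by $C\sum_jR^{n(\sm-2)}\sup_{Q_j}(R^{n(1-\sm)}+\xi^{-n}|f|^n)|Q_j^*|$. The $R^{n(1-\sm)}$ piece totals $O(1)$, and the $|f|^n$ piece splits according to whether $Q_j$ meets $\overline{B}_{R/4}$: ``far'' cubes contribute only $C\vep_0^n$, which we make $\le c_0/2$ by choosing $\vep_0$ small independently of $\sm$ and $R$. Hence
\[
\sum_{Q_j\text{ near }\overline{B}_{R/4}}|Q_j^*|\ \gtrsim\ R^n.
\]
By property (f) of the covering, on each such near cube a sub-portion $\xi_0|Q_j|$ of points satisfies $v(y)\ge\Gm(y)-C\ge-C'$ (since $\Gm\le\sup v\le M$), which gives $u=\Psi-v\le M+C'=:M_0$. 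Because $d_j\lesssim R$ and the cubes meet $\overline{B}_{R/4}$, the union of the enlarged cubes $4\sqrt{n}\,Q_j$ sits in a fixed dilate of $Q_R$, has measure $\gtrsim R^n=|Q_R|$, and lies in $\{u\le M_0\}$. After relabeling $M_0\to M$ and extracting a universal $\nu\in(0,1)$, the lemma follows.

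\textbf{Main obstacle.} The delicate step is controlling the gradient-induced tail in the ABP estimates uniformly. The nonsymmetry of the kernel generates a gradient contribution with rate $J_\sm(R)$ in Lemma~\ref{lem-6.1}, which is bounded when $\sm>1$ but blows up as $R\to 0$ for $\sm\le 1$. This forces the restriction to $\cS^{\fL_0}_\eta$ and the use of Lemma~\ref{lem-6.7}/Corollary~\ref{cor-6.8}, where $J_\sm(R)$ is replaced by $R^{1-\sm}$; the extra factor of $R$ from the cubes' diameter (in property (e)) then produces $R\cdot R^{\sm-2}\cdot R^{1-\sm}=O(1)$, salvaging the bound. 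Tracking this cancellation while keeping $\vep_0$, $\nu$, $M$ independent of $\sm$ within each regime, and simultaneously handling both the ``near'' and ``far'' cubes in the covering, is the core technical point.
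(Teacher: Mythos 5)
Your overall strategy — set $v=\Psi-u$, apply the nonlocal ABP (Theorem \ref{thm-6.4} resp. Corollary \ref{cor-6.8}), split the covering into cubes near/far from $\overline{B}_{R/4}$, and use the boundedness/support of $\psi$ to extract a lower bound on the near contribution — is exactly the paper's. The special-function construction, the reduction to $\cS^{\fL_0}_\eta$ via $\cC^+_\eta$, and the $R\cdot R^{\sigma-2}\cdot R^{1-\sigma}=O(1)$ cancellation you flag as the ``main obstacle'' all match the paper.

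However, there is a genuine gap in the way you close the ABP estimate, and it is precisely at the point where you deviate from the paper. You fix $\xi=1/R$, which gives a lower bound $c_0=C(n)\ln(M_0^n+1)$ on the weighted Monge–Amp\`ere integral, and an upper bound of the form
\begin{equation*}
c_0\ \le\ \underbrace{C\sum_j R^{n(\sigma-2)}(\text{gradient rate})^n|Q_j^*|}_{=:G=O(1)}\ +\ C\vep_0^n\ +\ CR^{-n}\!\!\sum_{Q_j\text{ near}}|Q_j^*|.
\end{equation*}
To deduce $\sum_{\text{near}}|Q_j^*|\gtrsim R^n$ you would need $c_0-G-C\vep_0^n>0$. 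You shrink $\vep_0$ to kill the middle term, but you never address $G$: it is a fixed $O(1)$ quantity (roughly $(R^{\sigma-1}J_\sigma(R))^n\lesssim(\sigma-1)^{-n}$ in the subcritical case, or a fixed geometric constant in the $\cS^{\fL_0}_\eta$ case) and there is no reason it should be smaller than $c_0\le C(n)\ln(M^n+1)$, since $M$ is dictated by the construction of $\Psi$ and cannot be enlarged without enlarging $\psi$, which enlarges $|f|$ proportionally. So with $\xi=1/R$ the inequality can be vacuously true with $\sum_{\text{near}}|Q_j^*|=0$.

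The paper avoids this by choosing $\xi$ \emph{data-dependently}: $\xi^n=\sum_j\sup_{\overline Q_j}\bigl((\psi+\vep_0)/R^2\bigr)^n|Q_j^*|$, which normalizes the source term so that the entire right side of the log inequality is $\le C$. Exponentiating then gives the \emph{multiplicative} estimate $M_0\le CR\xi$ rather than an additive one; the gradient contribution only enters the constant $C$ and is harmless. Since $M_0>1$ and $\xi$ decomposes into a small ($C\vep_0/R$) far part and a near part $\lesssim R^{-2}\bigl(\sum_{\text{near}}|Q_j^*|\bigr)^{1/n}$, taking $\vep_0$ small gives $\sum_{\text{near}}|Q_j^*|\gtrsim R^n$ regardless of the size of the gradient constant. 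If you replace your fixed $\xi=1/R$ with this choice, your proof closes and coincides with the paper's.
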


\noindent{\it Remark.}  We denote by $Q_r(x)$ an open cube
$\{y\in\BR^n:|y-x|_{\iy}\le r/2\}$ and $Q_r=Q_r(0)$. If we set
$Q=Q_r(x)$, then we denote by $s Q=Q_{s r}(x)$ for $s>0$.

{\it $[$Proof of Lemma \ref{lem-7.4}$]$}
(Case {\bf 1}: $1<\sigma_0<\sigma<2$.)
 We consider the function $v:=\Psi-u$ where
$\Psi$ is the special function constructed in Lemma \ref{lem-7.3}.
Then we easily see that $v$ is upper semicontinuous on $\overline
B_{2\sqrt n R}$ and $v$ is not positive on $\BR^n\s B_{2\sqrt n R}$.
Moreover, $v$ is a viscosity subsolution to $\cM^+_{\fL_0}
v\ge\cM^-_{\fL_0}\Psi -\cM^-_{\fL_0} u\ge-(\psi+\vep_0)/R^{\sm}$ on
$B_{2\sqrt n R}$. So we want to apply Theorem \ref{thm-6.4}
(rescaled) to $v$. Let $\Gm$ be the concave envelope of $v$ in
$B_{4\sqrt n R}$. Since $\inf_{Q_R}u\le 1$, $\inf_{Q_R}\Psi>2$ and
$Q_R\subset B_{2\sqrt n R}$, we easily see that
$M_0:=\sup_{B_{2\sqrt n R}}v=v(x_0)>1$ for some $x_0\in B_{2\sqrt n
R}$. We consider the function $g$ whose graph is the cone in
$\BR^n\times\BR$ with vertex $(x_0,M_0)$ and base $\pa B_{6\sqrt n
R}(x_0)\times\{0\}$. For any $\xi\in\BR^n$ with $|\xi|<M_0/6\sqrt n
R$, the hyperplane
$$H=\{(x,x_{n+1})\in\BR^n\times\BR:x_{n+1}=L(x):=
M_0+\xi\cdot(x-x_0)\}$$ is a supporting hyperplane for $g$ at $x_0$
in $B_{6\sqrt n R}(x_0)$. Then $H$ has a parallel hyperplane $H'$
which is a supporting hyperplane for $v$ in $B_{4\sqrt n R}$ at some
point $x_1\in B_{2\sqrt n R}$. By the definition of concave envelope,
we see that $H'$ is also the hyperplane tangent to the graph of
$\Gm$ at $x_1$, so that $\xi=\n\Gm(x_1)$. This implies that
$B_{M_0/(6\sqrt n R)}(0)\subset\n\Gm(B_{2\sqrt n R})$. Thus we have that
\begin{equation}\label{eq-7.6}
C(n)\ln \left(\frac{(M_0/R)^n}{\xi^n}+1\right)\leq
\int_{\cC(u,\Gm,B_R) }g_{\xi}(\n\Gm (y)) \det[D^2 \Gm(y)]^-\,dy,
\end{equation}
 where $g_{\xi}$ is the function
given in Corollary \ref{cor-6.3}. We also observe as shown in \cite{CC} that
\begin{equation}\label{eq-7.7}
\bigl|\n\Gm\bigl(B_{2\sqrt n R}\s\cC(v,\Gm,B_{2\sqrt
n R} )\bigr)\bigr|=0.
\end{equation}
 Let $\{Q_j\}$ be the finite family of
cubes given by Theorem \ref{thm-6.4} (rescaled on $B_{2\sqrt n R}$). Then it
follows from \eqref{eq-7.6}, \eqref{eq-7.7} and Theorem \ref{thm-6.4} that, for $Q^*_j=\overline{Q}_j\cap \cC(u,\Gm,B_{2\sqrt{n}R})$,
\begin{equation}\label{eq-7.8}\begin{split}&\ln\biggl(\frac{[(\sup_{B_{2\sqrt
          n R}}v)/R]^n}{\xi^n}+1\biggr)\le
C\,\int_{\cC(u,\Gm,B_{2\sqrt{n}R}) }g_{\xi}(\n\Gm (y))\det[D^2\Gm(y)]^-\,dy\\
&\qquad\qquad\le C\biggl(\sum_j\sup_{\overline
Q_j}\bigl((R^{\sigma-2} J_{\sigma}(R))^n+\xi^{-n}(R^{(\sigma-2)} (\psi+\vep_0)/R^{\sigma}\bigr)^n |Q^*_j|\biggr)\\
&\qquad\qquad\le C\biggl((R^{\sigma-2}
J_{\sigma}(R))^n\sum_j|Q^*_j|+\xi^{-n}\sum_j\sup_{\overline
Q_j}((\psi+\vep_0)/R^{2})^n |Q^*_j|\biggr)\\
&\qquad\qquad\le C\biggl((R^{\sigma-1}
J_{\sigma}(R))^n+\xi^{-n}\sum_j\sup_{\overline
Q_j}((\psi+\vep_0)/R^{2})^n |Q^*_j|\biggr).
\end{split}
\end{equation}
Here we note that $K_R:=Exp((R^{\sigma-1} J_{\sigma}(R))^n)\leq
C<\infty$ for $1<\sigma<2$ and $R<1$. If we set
$\xi=\bigl(\sum_j\sup_{\overline Q_j}((\psi+\vep_0)/R^{2})^n
|Q^*_j|\bigr)^{1/n}$ in \eqref{eq-7.8}, then we have that
\begin{equation}\label{eq-7.9}
\begin{split}\sup_{B_{2\sqrt n R}}v&\le CR\biggl(\sum_j\sup_{\overline Q_j}((\psi+\vep_0)/R^{2})^n
|Q^*_j|\biggr)^{1/n}\\
&\le C
\vep_0+CR\biggl(\sum_j\bigl((\sup_{\overline
Q_j}\psi)/R^{2}\bigr)^n |Q^*_j|\biggr)^{1/n}.\end{split}
\end{equation}
Since $\inf_{Q_R}u\le 1$ and $\inf_{Q_R}\Psi>2$, we see that
$\sup_{B_{2\sqrt n R}}v>1$. If we choose $\vep_0$ small enough, the
above inequality \eqref{eq-7.9} implies that
$$\f{1}{2^{1/n}} R\le C\biggl(\sum_j\bigl(\sup_{Q_j}\psi\bigr)^n
|Q^*_j|\biggr)^{1/n}.$$ We recall from the proof of Lemma
\ref{lem-7.3} that $\psi$ is supported on $\overline B_{R/4}$ and
bounded on $\BR^n$. Thus the above inequality becomes $$\f{1}{2}
|Q_R|\le C\biggl(\,\sum_{Q_j\cap B_{R/4}\neq\phi}
|Q^*_j|\,\biggr),$$ which provides a lower bound for the sum of the
volumes of the cubes $Q_j$ intersecting $B_{R/4}$ as follows;
\begin{equation}\label{eq-7.10}\sum_{Q^*_j\cap
B_{R/4}\neq\phi} |Q^*_j|\ge \nu |Q_R|.
\end{equation}
Since $y\in\cC(v,\Gm,B_{2\sqrt nR})$ implies $v(y)\ge 0$, and thus
$u(y)\le\Psi(y)\le M$ by Lemma \ref{lem-7.3}. Hence it follows from
\eqref{eq-7.10} that
\begin{equation*}|\{u\le M\}\cap Q_R|\ge|\{u\le\Psi\}\cap
Q_R|\ge|\cC(v,\Gm,B_{2\sqrt n R})\cap Q_R|\ge\sum_{Q_j^*\cap
B_{R/4}\neq\phi} |Q_j^*|\ge\nu |Q_R|.
\end{equation*} Hence we complete the proof of (Case 1).

(Case {\bf 2}: $0<\sigma\leq 1$.)
We know
$B_{M_0/(6\sqrt n R)}(0)\subset\n\Gm(B_{2\sqrt{n} R})$ and
$|\n\Gm(B_{ 2\sqrt{n}R})\cap \cC^+_{\eta}(u,\Gm,B_{2\sqrt{n}R}))|\geq \eta |B_{M_0/(6\sqrt n R)}(0)|$

Thus we have that, $Q^*_j=\overline{Q}_j\cap \cC_{\eta}(u,\Gm,B_{2\sqrt{n}R})$
\begin{equation}\label{eq-7.6}
 C(n)\ln \left(\frac{\eta(M_0/R)^n}{\xi^n}+1\right)\leq \int_{\cC^+_{\eta}(u,\Gm,B_{2\sqrt{n}R}) }g_{\xi}(\n\Gm (y))
\det[D^2 \Gm(y)]^-\,dy,
\end{equation}
 where $g_{\xi}$ is the function
given in Corollary \ref{cor-6.3}.
 Let $\{Q_j\}$ be the finite family of
cubes given by Theorem \ref{thm-6.4} (rescaled on $B_{2\sqrt n R}$). Then it
follows from \eqref{eq-7.6}, \eqref{eq-7.7} and Theorem \ref{thm-6.4} that
\begin{equation}\label{eq-7.8}\begin{split}&\ln\biggl(\frac{\e[(\sup_{B_{2\sqrt
          n R}}v)/R]^n}{\xi^n}+1\biggr)\le
C\,\int_{\cC(u,\Gm,B_{2\sqrt{n}R}) }g_{\xi}(\n\Gm (y))\det[D^2\Gm(y)]^-\,dy\\
&\qquad\qquad\le C\biggl(\sum_j\sup_{\overline
Q_j}\bigl((R^{\sigma-2} R^{1-\sigma})^n+\eta^{-n}(R^{(\sigma-2)} (\psi+\vep_0)/R^{\sigma}\bigr)^n |Q^*_j|\biggr)\\
&\qquad\qquad\le C\biggl((R^{-1})^n\sum_j|Q_j|+\xi^{-n}\sum_j\sup_{\overline
Q_j}((\psi+\vep_0)/R^{2})^n |Q^*_j|\biggr)\\
&\qquad\qquad\le C\biggl(1+\xi^{-n}\sum_j\sup_{\overline
Q_j}((\psi+\vep_0)/R^{2})^n |Q^*_j|\biggr).
\end{split}
\end{equation}
 The conclusion comes from the similar argument as (Case 1).\qed

We split  $Q_R$ into $2^n$ cubes of
half side. We do the same splitting step with each one of these
$2^n$ cubes and we continue this process. The cubes obtained in this
way are called {\it dyadic cubes.} If $Q$ is a dyadic cube different
from $Q_R$, then we say that $\Qtil$ is the {\it predecessor} of $Q$
if $Q$ is one of $2^n$ cubes obtained from splitting $\Qtil$.

\begin{lemma}\cite{CC}\label{lem-7.5} Let $A,B$ be measurable sets with $A\subset
B\subset Q_R$. If $\,\dt\in (0,1)$ is some number such that $(a)$
$|A|\le\dt$ and $(b)$ $\Qtil\subset B$ for any dyadic cube $Q$ with
$|A\cap Q|>\dt|Q|$, then $|A|\le\dt |B|$.\end{lemma}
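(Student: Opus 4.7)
The plan is to apply the classical Calderón--Zygmund stopping-time decomposition to the set $A$ at level $\dt$. Hypothesis $(a)$ (read as $|A|\le\dt|Q_R|$) guarantees that $Q_R$ itself does not satisfy the stopping criterion $|A\cap Q|>\dt|Q|$, which is what is needed to initiate the subdivision procedure; the bookkeeping then feeds hypothesis $(b)$ to translate a covering of $A$ by small dyadic cubes into an area bound in terms of $|B|$.

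First I would dyadically bisect $Q_R$ into $2^n$ equal subcubes, and iterate: at each stage, for each cube $Q$ produced by the previous step, if $|A\cap Q|>\dt|Q|$ then declare $Q$ to be a selected cube and stop subdividing it; otherwise subdivide it further. This yields a countable family $\{Q_j\}$ of pairwise disjoint dyadic cubes, each satisfying $|A\cap Q_j|>\dt|Q_j|$. By the Lebesgue differentiation theorem, a.e.\ point of $A$ is a point of density $1$; for such a point, every sufficiently small dyadic cube containing it satisfies the stopping inequality, so the point lies in some $Q_j$. Consequently $A\subset\bigcup_j Q_j$ up to a null set. For each $j$, the dyadic predecessor $\Qtil_j$ was \emph{not} selected, whence
\[
|A\cap\Qtil_j|\le\dt\,|\Qtil_j|,
\]
and by hypothesis $(b)$ we have $\Qtil_j\subset B$.

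The final step, and the one place where some care is needed, is that the predecessors $\{\Qtil_j\}$ may overlap, so one cannot sum $|\Qtil_j|$ directly. I would handle this by extracting a subcollection $\{\Qtil_{j'}\}_{j'\in J'}$ consisting of the maximal cubes among $\{\Qtil_j\}$; by the tree structure of dyadic cubes these are pairwise disjoint, each of the original $Q_j$ still lies in some $\Qtil_{j'}$, and each $\Qtil_{j'}$ satisfies $|A\cap\Qtil_{j'}|\le\dt|\Qtil_{j'}|$ and $\Qtil_{j'}\subset B$. Therefore
\[
|A|=\sum_{j'\in J'}|A\cap\Qtil_{j'}|\le\dt\sum_{j'\in J'}|\Qtil_{j'}|
=\dt\,\Big|\bigcup_{j'\in J'}\Qtil_{j'}\Big|\le\dt\,|B|,
\]
which is the claim. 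The main (mild) obstacle is exactly this overlap issue in the predecessor family; once one realises that maximal dyadic cubes are automatically disjoint, the argument closes in one line.
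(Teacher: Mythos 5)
Your proof is correct and is essentially the standard Calder\'on--Zygmund covering argument from Caffarelli--Cabr\'e (Lemma~4.2 of \cite{CC}), which is exactly the source the paper cites for this lemma without reproducing a proof. You also rightly read hypothesis~$(a)$ as $|A|\le\dt|Q_R|$ (needed so that $Q_R$ itself fails the stopping criterion), and your observation that the predecessor cubes may nest---handled by passing to the maximal ones, which are automatically pairwise disjoint in the dyadic tree---is precisely the point where the summation $\sum_{j'}|\Qtil_{j'}|=\bigl|\bigcup_{j'}\Qtil_{j'}\bigr|\le|B|$ closes the argument.
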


The following lemma is a consequence of Lemma \ref{lem-7.4} and
Lemma \ref{lem-7.5}.

\begin{lemma}\label{lem-7.6} Under the same condition as Lemma \ref{lem-7.4}, there are universal constants $C>0$ and $\vep_*>0$ such
that
$$\bigl|\{u>t\}\cap Q_R\bigr|\le C\,t^{-\vep_*}|Q_R|,\,\forall\,t>0.$$
\end{lemma}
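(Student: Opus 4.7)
The plan is to iterate Lemma \ref{lem-7.4} by means of the Calder\'on--Zygmund dyadic decomposition of Lemma \ref{lem-7.5}, following the Krylov--Safonov template adapted to the nonlocal setting. Set $A_k:=\{u>M^k\}\cap Q_R$ for $k\ge 0$, where $M,\nu$ are the constants furnished by Lemma \ref{lem-7.4}. I would first establish by induction the geometric decay
\[
|A_k|\le (1-\nu)^k|Q_R|,
\]
from which the statement follows by taking $\vep_*=-\log(1-\nu)/\log M>0$: for $M^k\le t<M^{k+1}$,
\[
|\{u>t\}\cap Q_R|\le|A_k|\le (1-\nu)^k|Q_R|\le C\,t^{-\vep_*}|Q_R|.
\]

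The base case $k=1$ is exactly the conclusion of Lemma \ref{lem-7.4} rewritten as $|\{u>M\}\cap Q_R|\le (1-\nu)|Q_R|$. For the inductive step, I would apply Lemma \ref{lem-7.5} with $A=A_{k+1}$, $B=A_k$, and $\delta=1-\nu$; hypothesis (a) is immediate from the inductive bound. The real content is hypothesis (b): for every dyadic cube $Q=Q_r(x_0)\subset Q_R$ with $|A_{k+1}\cap Q|>(1-\nu)|Q|$, its predecessor $\widetilde{Q}=Q_{2r}(x_0)$ must lie entirely in $A_k$. I would argue this by contradiction. Suppose some $\widetilde{x}\in\widetilde{Q}$ satisfies $u(\widetilde{x})\le M^k$, and set $w:=u/M^k$. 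Then $w\ge 0$ on $\BR^n$, the inclusion $\widetilde{x}\in Q_{3r}(x_0)$ gives $\inf_{Q_{3r}(x_0)}w\le 1$, and by linearity of $\cM^{\pm}_{\fL_0}$ together with the degree-one homogeneity of $\cB^{\pm}_R$ from Definition \ref{def-S-eta}, the function $w$ is a viscosity supersolution of $\cM^-_{\fL_0}w\le \vep_0/(M^k R^\sm)\le \vep_0/r^\sm$ in the case $1<\sm<2$, and respectively of $\cM^-_{\fL_0,r,\eta}w\le \vep_0/r^\sm$ when $0<\sm\le 1$; the last inequality in each case follows from $r\le R$ and $M>1$.

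Applying Lemma \ref{lem-7.4} centered at $x_0$ at scale $r$ (legitimate because $r\le R\le R_0$ and since $\cM^-_{\fL_0}$ and $\cM^-_{\fL_0,r,\eta}$ are translation invariant), I would conclude $|\{w\le M\}\cap Q|\ge \nu|Q|$, equivalently $|A_{k+1}\cap Q|\le (1-\nu)|Q|$, contradicting the defining property of $Q$. Hence $\widetilde{Q}\subset A_k$, and Lemma \ref{lem-7.5} gives $|A_{k+1}|\le (1-\nu)|A_k|\le (1-\nu)^{k+1}|Q_R|$, closing the induction. The chief subtlety, and the main obstacle, is the non-scale-invariance of the equation, which puts the explicit factor $R^{-\sm}$ on the right-hand side of the supersolution inequality; the descent to a smaller cube succeeds because the normalization of $w$ contributes the compensating factor $M^{-k}$, and $M^k\ge (r/R)^\sm$ is automatic. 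The implicit assumption that $u\ge 0$ on a sufficiently large neighborhood of every translated cube used in the iteration is inherited from the standing hypothesis of Lemma \ref{lem-7.4}.
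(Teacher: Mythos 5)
Your proof is correct and follows essentially the same route as the paper: induction on $k$ to get the geometric decay $|\{u>M^k\}\cap Q_R|\le(1-\nu)^k|Q_R|$, with Lemma \ref{lem-7.4} giving the base case and Lemma \ref{lem-7.5} (applied to $A=A_{k+1}$, $B=A_k$, $\delta=1-\nu$) giving the inductive step, the condition (b) being verified by the translation-plus-normalization $w=u(\cdot)/M^k$ and a re-application of Lemma \ref{lem-7.4} at the smaller scale, with the loss of scale invariance absorbed because $M^k\ge(r/R)^\sigma$ automatically. One small inaccuracy: the predecessor $\widetilde Q$ of $Q=Q_r(x_0)$ is a cube of side $2r$ containing $Q$, not $Q_{2r}(x_0)$, but since $\widetilde Q\subset Q_{3r}(x_0)$ the conclusion $\inf_{Q_{3r}(x_0)}w\le 1$ is still valid and the argument is unaffected.
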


\begin{remark}
 We note that $B_{R/2}\subset Q_R\subset Q_{3R}\subset
B_{3\sqrt{n}R/2}\subset B_{2\sqrt{n}R}.$
\end{remark}

\pf
(Case {\bf 1}: $1<\sigma_0<\sigma<2$.)

  First, we shall prove that \begin{equation}\label{eq-7.12}\bigl|\{u>M^k\}\cap
Q_R\bigr|\le(1-\nu)^k|Q_R|,\,\forall\,k\in\BN,\end{equation} where $\nu>0$
is the constant as in Lemma \ref{lem-7.4} and $M>1$ is the constant chosen in
Lemma \ref{lem-7.4}

If $k=1$, then it has been done in Lemma \ref{lem-7.4}. Assume that the result
\eqref{eq-7.12} holds for $k-1$ ($k\ge 2$) and let $$A=\{u>M^k\}\cap
Q_R\,\,\text{ and }\,\,B=\{u>M^{k-1}\}\cap Q_R.$$ If we can show
that $|A|\le(1-\nu)|B|$, then \eqref{eq-7.12} can be obtained for $k$. To
show this, we apply Lemma \ref{lem-7.6}. By Lemma \ref{lem-7.4}, it is clear that
$A\subset B\subset Q_R$ and $|A|\le|\{u>M\}\cap Q_R|\le (1-\nu)|Q_R| $. So
it remain only to prove (b) of Lemma \ref{lem-7.5}; that is, we need to show
that if $Q=Q_{2^{-i}R}(x_0)$ is a dyadic cube satisfying
\begin{equation}\label{eq-7.13}|A\cap
Q|>(1-\nu)|Q|
\end{equation}
 then $\Qtil\subset B$. Indeed, we suppose
that $\Qtil\not\subset B$ and take $x_*\in\Qtil$ such that
\begin{equation}\label{eq-7.14}u(x_*)\le M^{k-1}.
\end{equation}
 We now consider the transformation
$x=x_0+y,\,\,y\in Q_{2^{-i}R},\,\,x\in Q=Q_{2^{-i}R}(x_0)$ and the
function $v(y)=u(x)/M^{k-1}$. If we can show that $v$ satisfies the
hypothesis of Lemma \ref{lem-7.4}, then we have that $\nu |Q|<|\{u(x)\le M^k\}\cap Q|$, and thus $|Q\s A|>\nu
|Q|$ which contradicts \eqref{eq-7.13}.

To complete the proof, we consider once again the transformation
$$x=x_0+z,\,\,z\in B_{\f{\sqrt n}{2^i} R},\,\,x\in B_{\f{\sqrt n}{2^{i-1}} R}(x_0)\subset B_{2\sqrt{n}R}$$ and the
function $v(z)=u(x)/M^{k-1}$. It now remains to show that $v$
satisfies the hypothesis of Lemma \ref{lem-7.4}. We now take any
$\vp\in\rC^2_{2\sqrt{n}2^{-i}R}(v;z)^-$. If we set
$\psi=M^{k-1}\vp(\,\cdot\,-x_0)$, then we observe that
$$\vp\in\rC^2_{B_{2\sqrt{n} 2^{-i}R}}(v;z)^-\,\,\,\Leftrightarrow\,\,\,\psi\in
\rC^2_{B_{2\sqrt{n}2^{-i}R}(x_0)}(u;x_0+z)^-.$$ Since $B_{2\sqrt
n2^{-i}R}(x_0)\subset B_{2\sqrt n R}$, we have that
\begin{equation*}\begin{split}\cM^-_{\fL_0}v(z;\n\vp)&\le\cL
v(z;\n\vp)\\
&=\f{1}{M^{k-1}}\int_{\BR^n}\mu (u,x_0+z,y;\n\psi)K(y)\,dy\\
&:=\f{1}{M^{k-1}}\cL u(x_0+z;\n\psi)\end{split}\end{equation*} for
any $\cL\in\fL$. Taking the infimum of the right-hand side in the
above inequality, we obtain that
$$\cM^-_{\fL_0}v(z;\n\vp)\le\f{1}{M^{k-1}}\cM^-_{\fL_0} u(x_0+z;\n\psi).$$
Thus we have that $$\cM^-_{\fL_0}v(z;\n\vp)\le\f{\vep_0}{(2^{-i}
R)^{\sigma}},$$ because $\cM^-_{\fL_0}u\le\f{\vep_0}{R^{\sigma}}$ on
$B_{2\sqrt{n}R}\,$. Also it is obvious that $v\ge 0$ on $\BR^n$ and
we see from \eqref{eq-7.14} that $\inf_{Q} v\le 1$. Finally the
result follows immediately from \eqref{eq-7.12} by taking
$C=(1-\nu)^{-1}$ and $\vep_*>0$ so that $1-\nu=M^{-\vep_*}$.

(Case {\bf 2}: $0<\sigma\leq 1$.)
The argument above is based on Lemma \ref{lem-7.4}, C-Z decomposition Lemma \ref{lem-7.5}, and the invarinace under translation that $\cM^-_{\fL_0,R,\eta}v(z;\n\vp)$ also has.
A similar argument gives us the conclusion.
\qed

By a standard covering argument we obtain the following theorem.

\begin{thm}\label{thm-7.8}Under the same condition as Lemma \ref{lem-7.4}, then there are universal constants $C>0$ and
$\vep_*>0$ such that
$$\bigl|\{u>t\}\cap B_R\bigr|\le C\,t^{-\vep_*} |B_R|,\,\forall\,t>0.$$\end{thm}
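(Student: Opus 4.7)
The theorem follows from Lemma~\ref{lem-7.6} via a finite covering of $B_R$ by translated cubes, to each of which the weak-$L^{\vep_*}$ estimate applies after an appropriate rescaling. Since $B_R \subset Q_{2R}$ and $|Q_{2R}|$ is comparable to $|B_R|$, it suffices to exhibit such a cover and sum the resulting estimates.

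First I would cover $B_R$ by $N = N(n)$ cubes $\{Q_j = Q_{R/2}(x_j)\}_{j=1}^N$ with centers $x_j \in B_R$. The choice of side length $R/2$ is designed so that each $4\sqrt n$-enlargement $Q_{2\sqrt n R}(x_j) = Q_{4\sqrt n \cdot R/2}(x_j)$ is contained in $Q_{4\sqrt n R}$ (one checks this reduces to the inequality $1 \le \sqrt n$), so the nonnegativity hypothesis $u \ge 0$ on the enlarged cube is inherited from the hypothesis of Lemma~\ref{lem-7.4}. By translation invariance of $\cM^-_{\fL_0}$ and $\cM^-_{\fL_0, R, \e}$, the translated function $v_j(y) := u(x_j + y)$ is a viscosity supersolution of the corresponding inequality at scale $R/2$, since $\vep_0/R^\sm \le \vep_0/(R/2)^\sm$ for any $\sm > 0$.

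Setting $m_j := \max\bigl(1,\inf_{Q_{3R/2}(x_j)} u\bigr)$ and applying Lemma~\ref{lem-7.6} to $v_j/m_j$ at scale $R/2$ yields $|\{u > t\} \cap Q_j| \le C\, m_j^{\vep_*}\, t^{-\vep_*}\, |Q_j|$. Summing over the finitely many cubes,
\[
|\{u > t\} \cap B_R| \;\le\; \sum_{j=1}^N |\{u > t\} \cap Q_j| \;\le\; C\Bigl(\max_j m_j\Bigr)^{\vep_*} t^{-\vep_*} \sum_{j=1}^N |Q_j| \;\le\; C(n)\, t^{-\vep_*} |B_R|.
\]

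The main obstacle I anticipate is uniformly bounding $\max_j m_j$ by a dimensional constant. This can be dispatched by a chaining argument using Lemma~\ref{lem-7.4} itself: the hypothesis provides a point $x^* \in Q_{3R}$ with $u(x^*) \le 1$, so for any cube $Q_j$ whose $3R/2$-enlargement contains $x^*$ one has $m_j \le 1$. For the remaining cubes, connect each to one containing $x^*$ by a chain of $O(1)$ adjacent overlapping cubes in the cover; at each step Lemma~\ref{lem-7.4} gives $|\{u \le M\} \cap Q_j| \ge \nu\, |Q_j|$, and a fixed overlap between consecutive cubes forces $m_{j+1} \le M$ whenever $m_j \le 1$; iterating $O(1)$ steps yields $m_j \le M^{O(1)}$, a purely dimensional constant.
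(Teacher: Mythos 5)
Your covering strategy is exactly what the paper means by ``a standard covering argument,'' and the overall structure is sound: cover $B_R$ by $O_n(1)$ translated cubes of comparable side, apply Lemma~\ref{lem-7.6} to each after a normalization, and bound the normalizing constants $m_j$ by chaining with Lemma~\ref{lem-7.4}. There is, however, a genuine gap in the chaining step as written. The anchor of your chain is a cube $Q_j$ with $x^*\in Q_{3R/2}(x_j)$, but the hypothesis only gives $x^*\in Q_{3R}$, whose corners lie at $\ell^\infty$-distance $3R/2$ from the origin, whereas the enlargements $Q_{3R/2}(x_j)$ of cubes from a cover of $B_R$ reach $\ell^\infty$-distance at most about $R+\tfrac34 R$. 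For small $n$ these enlargements need not contain all of $Q_{3R}$, so no starting cube may exist and the claim ``for any cube $Q_j$ whose $3R/2$-enlargement contains $x^*$'' can be vacuous. The fix is cheap: first apply Lemma~\ref{lem-7.4} once at scale $R$ centered at the origin to produce a point $y^*\in Q_R$ with $u(y^*)\le M$; since $Q_R$ is certainly contained in the union of the $Q_{3R/2}(x_j)$, you can anchor every chain at this $y^*$ and then run your iteration to get $m_j\le M^{O(n)}$ as intended.

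A second point, which you gloss over and which is inherited from the paper's own terse treatment of Lemma~\ref{lem-7.6}, Case~2: for $0<\sigma\le 1$ the operator $\cM^-_{\fL_0,R,\eta}$ depends on $R$ not only through the right-hand side but also through the truncation in $\mu_R$, through $\cB^\pm_R$, and through the factor $R^{1-\sigma}$ in Definition~\ref{def-S-eta}. Translation invariance alone therefore does not convert the hypothesis $\cM^-_{\fL_0,R,\eta}u\le\vep_0/R^\sigma$ into the hypothesis of Lemma~\ref{lem-7.6} at scale $R/2$, which involves $\cM^-_{\fL_0,R/2,\eta}$; your remark that only $\vep_0/R^\sigma\le\vep_0/(R/2)^\sigma$ matters is not quite enough. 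Because $R$ and $R/2$ are comparable, the discrepancy --- an integral of $|\n u(x)\cdot y|$ over the annulus $B_R\setminus B_{R/2}$, plus the change in $\cB^\pm$ and in $R^{1-\sigma}$ --- is of the same order $(2-\sigma)R^{1-\sigma}|\n u|$ as the gradient term already built into the definition, so it can be absorbed by enlarging the universal constant $C$ there; but this needs to be said, since the constants produced by Lemma~\ref{lem-7.4} are supposed to be scale-independent.
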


In contrast to symmetric cases, we note that we can not obtain the
following theorem by rescaling the above theorem because our cases
are not scaling invariant. We note that Theorem \ref{thm-7.9} on
$r\in (0,1)$ shall be applied to obtain a Harnack inequality, and
also Theorem \ref{thm-7.9} on $r\in[R,2R]$ will be used to prove
H\"older estimates and an interior $\rC^{1,\ap}$-regularity.

\begin{thm}\label{thm-7.9}Let $\sm_0\in (1,2)$ and assume that  $\,\sm\in
(0,1]$ or $\,\sm\in
(\sm_0,2]$ and that  $R\in(0,R_0]$. For a  constant
$\vep_0>0$ given at Theorem \ref{thm-7.9},  if  $u$ is a viscosity supersolution to either $\cM^-_{\fL_0}
u\le\vep_0/R^{\sigma}$ with $\sigma> 1$  or $\cM^-_{\fL_0,R,\eta}
u\le\vep_0/R^{\sigma}$ with $\sigma\leq  1$  such that  $u\geq 0$ on $\R^n$, then there are
universal constants $\vep_*>0$ and $C>0$ such that
$$\bigl|\{u>t\}\cap B_r(x)\bigr|\le C\,r^n\bigl(u(x)+c_0R^{\sigma}\,r^{\sm}\bigr)^{\vep_*}t^{-\vep_*},\,\forall\,t>0.$$
\end{thm}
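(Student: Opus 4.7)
The plan is to reduce the statement to Theorem \ref{thm-7.8} by translation and by rescaling the \emph{values} of $u$, since rescaling the spatial variable is blocked by the $\chi_{B_1}(y)$ in \eqref{eq-2.1}. After translating so that $x=0$, set $M=u(0)+c_0 R^{\sm}r^{\sm}$ for a constant $c_0>0$ to be chosen, and consider $v(y)=u(y)/M$. Then $v\ge 0$ on $\BR^n$ by hypothesis, and $v(0)=u(0)/M\le 1$; since $0\in Q_{3r}$, this forces $\inf_{Q_{3r}}v\le 1$. If one can verify that $v$ satisfies the right supersolution bound at scale $r$, then Theorem \ref{thm-7.8} applied to $v$ on $B_r$ gives $|\{v>s\}\cap B_r|\le C s^{-\vep_*}|B_r|$ for all $s>0$. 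Writing this in terms of $u$ via $s=t/M$ yields the estimate of Theorem \ref{thm-7.9} whenever $t>M$; in the range $t\le M$ the trivial bound $|\{u>t\}\cap B_r|\le|B_r|\le Cr^n(M/t)^{\vep_*}$ already suffices, so the two ranges combine to give the claim for all $t>0$.

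First I would treat the subcritical case $1<\sm<2$. The hypothesis is $\cM^-_{\fL_0}u\le\vep_0/R^{\sm}$, and by positive homogeneity of $\cM^-_{\fL_0}$ one has $\cM^-_{\fL_0}v\le\vep_0/(MR^{\sm})$. With $M\ge c_0 R^{\sm}r^{\sm}$ and $R\le R_0\le 1$, choosing $c_0$ large enough (depending only on $R_0$) gives $\vep_0/(MR^{\sm})\le\vep_0/r^{\sm}$ on $B_{2\sqrt n\,r}$, so Theorem \ref{thm-7.8} can be invoked at scale $r$ and the proof closes as sketched above.

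For $0<\sm\le 1$ the hypothesis is instead $\cM^-_{\fL_0,R,\eta}u\le\vep_0/R^{\sm}$. By positive homogeneity the same inequality holds for $v=u/M$ at scale $R$, but to apply Theorem \ref{thm-7.8} at scale $r$ one needs $\cM^-_{\fL_0,r,\eta}v\le\vep_0/r^{\sm}$. The difference $\cM^-_{\fL_0,R,\eta}v-\cM^-_{\fL_0,r,\eta}v$ splits into three pieces: the integral of $(y\cdot\n v)K(y)$ over the annulus $B_R\s B_r$, the difference $\cB^-_R(\n v)-\cB^-_r(\n v)$, and the difference of correction constants $(2-\sm)C(R^{1-\sm}-r^{1-\sm})|\n v|$. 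Using \eqref{eq-2.3}, the $0$-homogeneity of $\cB^{\pm}_{\cdot}$, and the bound $r\le R\le R_0$, each of these pieces is controlled by a universal multiple of $(2-\sm)r^{1-\sm}|\n v|$ and therefore absorbed by the built-in drift correction in $\cM^-_{\fL_0,r,\eta}$ after enlarging $c_0$, yielding $\cM^-_{\fL_0,r,\eta}v\le\vep_0/r^{\sm}$; then Theorem \ref{thm-7.8} finishes the argument as before.

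The main obstacle is precisely this scale-change comparison in the regime $0<\sm\le 1$: because the operator class $\cS^{\fL_0}_{\eta}$ is defined with an $R$-dependent truncation and an $R$-dependent drift correction, the natural rescaling does not land cleanly inside the class at the smaller scale $r$, and one has to track every cross-term in the passage from $R$ to $r$ to show that no uncontrolled blow-up of the form $R^{1-\sm}/r^{\sm}$ appears. Once that bookkeeping is carried out uniformly in $\sm$ and in $R\in(0,R_0]$, combining the rescaled Theorem \ref{thm-7.8} with the trivial bound on $\{t\le M\}$ gives the claimed estimate with universal constants $\vep_*>0$ and $C>0$.
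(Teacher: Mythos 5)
Your plan is the paper's proof: translate to $x$, renormalize the \emph{values} of $u$ by a constant $M\gtrsim u(x)$ chosen large enough that $v=u/M$ satisfies the supersolution inequality at scale $r$ and $\inf_{Q_{3r}}v\le 1$, and then apply Theorem~\ref{thm-7.8} to $v$ and unwind. The paper treats $0<\sm\le 1$ by saying only ``a similar argument,'' so your explicit bookkeeping of the scale-change between $\cM^-_{\fL_0,R,\eta}$ and $\cM^-_{\fL_0,r,\eta}$ (truncation annulus, $\cB^\pm$ piece, and drift-correction constant) is a useful expansion rather than a different route.

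One arithmetic step does not close as written, and you should fix it rather than assert it. With $M=u(0)+c_0R^{\sm}r^{\sm}$ (the form in the theorem's display), the requirement $\vep_0/(MR^{\sm})\le\vep_0/r^{\sm}$ is $MR^{\sm}\ge r^{\sm}$, hence $c_0\ge R^{-2\sm}$, which cannot be made universal over $R\in(0,R_0]$; so ``choosing $c_0$ large enough (depending only on $R_0$)'' is not justified. The paper's own proof uses $q=u(x)+c_0 r^{\sm}/\vep_0$, which has the same defect; the normalization that actually makes $\vep_0/(qR^{\sm})\le\vep_0/r^{\sm}$ hold with a universal constant is $q=u(x)+c_0 r^{\sm}/(\vep_0 R^{\sm})$, closing once $c_0\ge 1/\vep_0$. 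In other words, the $R^{\sm}r^{\sm}$ in the theorem statement is almost certainly a misprint for $r^{\sm}R^{-\sm}$ (or equivalent), and your write-up should commit to the normalization that actually verifies the scaled supersolution hypothesis rather than the one that merely matches the printed display.
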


\pf
(Case 1: $1<\sigma_0<\sigma<2$.)
Let $x\in\BR^n$ and set $v(z)=u(z+x)/q$ for $z\in B_{2r}$ where
$q=u(x)+c_0 r^{\sm}/\vep_0.$ Take any $\vp\in
\rC^2_{B_{2r}}(v;z)^-$. If we set $\psi=q\,\vp(\,\cdot-x)$, then we
see that $\psi\in\rC^2_{B_{2r}(x)}(u;z+x)$. Thus by the change of
variables we have that
\begin{equation*}\begin{split}\cM^-_{\fL_0}v(z;\n\vp)&\le\cL
v(z;\n\vp)\\
&=\f{1}{q}\int_{\BR^n}\mu( z+x,y;\n\psi)\,K(y)\,dy\\
&:=\f{1}{q}\cL u(z+x;\n\psi)\end{split}\end{equation*} for any
$\cL\in\fL_0$. Taking the infimum of the right-hand side in the
above inequality, we get that
$$\cM^-_{\fL_0}v(z;\n\vp)\le\f{1}{q}\cM^-_{\fL_0}
u(rz+x;\n\psi)\le \f{\vep_0}{r^2}.$$ Thus we have that
$\cM^-_{\fL_0}v\le\f{\vep_0}{r^2}$ on $B_{2r}$. Applying Theorem
\ref{thm-7.8} to the function $v$, we complete the proof.

(Case {\bf 2}: $0<\sigma\leq 1$.)
A similar argument gives us the conclusion.
  \qed
%%%%%%%%%%%%%%%%%%%%%%%%%%%%%%%%%%%%%%%%%%%%%%%%%%%%%%%%%%%%%%%%%%%%%%%%%%%%%%%%%%%%%%%%%%%
%%%%%%%%%%%%%%%%%%%%%%%%%%%%%%%%%%%%%%%%%%%%%%%%%%%%%%%%%%%%%%%%%%%%%%%%%%%%%%%%%%%%%%%%%%%
\section{Regularity Theory}\label{sec-8.0}
\subsection{ Harnack inequality}\label{sec-8}

\begin{thm}\label{thm-8.1}
 Let $\sm_0\in (1,2)$ and assume that  $\,\sm\in
(0,1]$ or $\,\sm\in
(\sm_0,2]$ an that $R\in(0,R_0]$.  If $\,u\in\rB(\BR^n)$ is a positive function such
that
$$\cM_{\fL_0}^- u\le \f{C_0}{R^\sigma}\,\,\,\text{ and }\,\,\,\cM_{\fL_0}^+ u\ge -\f{C_0}{R^\sigma}\,\,\text{
with $\sigma_0<\sigma<2$
on $B_{2R}$}$$
or
$$\cM_{\fL_0,R,\eta}^- u\le \f{C_0}{R^\sigma}\,\,\,\text{ and }\,\,\,\cM_{\fL_0,R,\eta}^+ u\ge -\f{C_0}{R^\sigma}\,\,\text{
with $0<\sigma \leq 1$
on $B_{2R}$}$$
 in the viscosity sense, then there is uniform  constant
$C>0$ such that
$$\sup_{B_{R/2}} u\le C\,\bigl(\,\inf_{B_{R/2}}u+C_0 \bigr).$$
For $\sm\in (\sm_0,2)$, $C$ depends only on $\ld,\Ld$ , the dimension $n$, and $\sm_0$. And for $\,\sm\in
(0,1]$ , $C$ depends only on $\ld,\Ld$ , $n$, $\sm$, and $\eta$.
\end{thm}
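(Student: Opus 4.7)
My plan is to prove Theorem \ref{thm-8.1} by the standard Caffarelli iteration scheme adapted to the nonlocal setting, using the $L^{\varepsilon_*}$-decay estimate Theorem \ref{thm-7.9} as the main engine. First, by replacing $u$ with $(u+C_0)/(\inf_{B_{R/2}}u+C_0)$ and noting that both Pucci-type inequalities are preserved up to a universal rescaling of $C_0$, I reduce to the case $\inf_{B_{R/2}}u\le 1$, $u\ge 0$ on $\BR^n$, with $\cM^-_{\fL_0}u\le \tilde C_0/R^\sm$ and $\cM^+_{\fL_0}u\ge -\tilde C_0/R^\sm$ (or the $\eta$-versions when $\sm\le 1$), and aim to show $\sup_{B_{R/2}}u\le C$. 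Suppose for contradiction that $u(x_0)\ge M_0$ at some $x_0\in B_{R/2}$, where $M_0$ is to be chosen huge.

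The heart of the argument is an iteration: given $x_k\in B_{3R/4}$ with $u(x_k)\ge M_k$, I will produce $x_{k+1}$ with $|x_{k+1}-x_k|\le r_k$ and $u(x_{k+1})\ge\theta M_k$, where $\theta>1$ and $r_k\sim (M_k)^{-\varepsilon_*/n}R$ are universal. To do this, apply Theorem \ref{thm-7.9} at $x_k$: the set $\{u>M_k/2\}\cap B_{r_k}(x_k)$ has measure at most $Cr_k^n(M_k+c_0R^\sm r_k^\sm)^{\varepsilon_*}(M_k/2)^{-\varepsilon_*}$, which is less than $\tfrac12|B_{r_k}|$ once $r_k$ is chosen correctly. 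Hence a definite fraction of $B_{r_k}(x_k)$ lies in $\{u\le M_k/2\}$. Now I apply the A-B-P based Lemma \ref{lem-7.4} (or its $\eta$-version) to the rescaled function $v(y)=(AM_k-u(x_k+y))/((A-1)M_k)$ on a ball of radius $\sim r_k$: $v$ is a subsolution of the opposite extremal inequality (with right-hand side controlled by $\tilde C_0/((A-1)M_k r_k^\sm)$, which is small for $A$ large), $v\ge 0$ on the working ball because of the previous measure estimate, and by hypothesis $v(0)\le 0$. The ABP estimate then forces $v$ to be negative on a set of positive measure in $B_{r_k/2}(x_k)$, i.e.\ $u>AM_k$ somewhere there. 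Taking $A=2\theta$ picks out the desired $x_{k+1}$.

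Iterating yields $x_k$ with $u(x_k)\ge\theta^k M_0$ and $\sum_k r_k\le CR\,M_0^{-\varepsilon_*/n}\sum_k\theta^{-k\varepsilon_*/n}<\infty$, so the whole sequence stays in a tiny ball around $x_0$, contradicting the global bound $|\{u>t\}\cap B_R|\le Ct^{-\varepsilon_*}R^n$ from Theorem \ref{thm-7.9} applied at any point with $u\le 1$ (which exists in $B_{R/2}$). The delicate step, which I expect to be the main obstacle, is the ABP application inside the iteration: because the equation is not scale invariant, the gradient/drift correction in $\cM^+_{\fL_0,R,\e}$ must be carried through the rescaling onto the smaller ball $B_{r_k}(x_k)$. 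For $\sm\in(\sm_0,2)$ this is routine since $J_\sigma(R)R^{\sm-1}$ is bounded, whereas for $\sm\in(0,1]$ it is essential to use Lemma \ref{lem-6.7} on $\cS^{\fL_0}_{\eta}$, whose gradient term $(2-\sm)CR^{1-\sm}|\n\Gm|$ rescales correctly and keeps all constants universal. Tail contributions of $u$ outside the working ball are benign because $u\ge 0$ globally and Theorem \ref{thm-7.9} already gives integrable $L^{\varepsilon_*}$-decay.
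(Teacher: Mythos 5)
Your proposal replaces the paper's power-law envelope argument with a chaining iteration (geometric growth of $u$ along a Cauchy sequence of points). Both strategies rest on the same two engines -- the $L^{\varepsilon_*}$-decay of Theorem~\ref{thm-7.9}/\ref{thm-7.8} and the A-B-P-based measure estimate of Lemma~\ref{lem-7.4} -- so the difference is organizational rather than conceptual. The paper instead defines $s_0=\inf\{s>0: u(x)\le s(1-|x|/R)^{-\beta}\}$ with $\beta=n/\varepsilon_*$, picks the touching point $x_0$, and derives a one-shot contradiction: by Theorem~\ref{thm-7.8} the set $\{u\ge u(x_0)/2\}$ is small near $x_0$, while an application of Theorem~\ref{thm-7.9} to $w=\bigl((1-\delta)^{-\beta}u(x_0)-u\bigr)^+$ shows $\{u\le u(x_0)/2\}$ is also small there, forcing $s_0$ to be bounded.

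As written, your iteration has two structural gaps, each corresponding precisely to a device the paper's argument supplies. First, the assertion that $v(y)=\bigl(AM_k-u(x_k+y)\bigr)/\bigl((A-1)M_k\bigr)$ satisfies ``$v\ge 0$ on the working ball because of the previous measure estimate'' does not follow. The measure estimate only ensures $u\le M_k/2$ on a \emph{fraction} of $B_{r_k}(x_k)$; it says nothing about $\sup_{B_{r_k}(x_k)}u$. Nonnegativity of $v$ requires $u\le AM_k$ on the whole ball, which is exactly the a priori upper bound you lack when $x_k$ is an arbitrary high point. The paper's envelope $s_0(1-|x|/R)^{-\beta}$ is there to manufacture this bound: at the touching point $x_0$ one automatically has $u\le(1-\delta)^{-\beta}u(x_0)$ on $B_{2\delta r}(x_0)$. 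Without an analogous mechanism Lemma~\ref{lem-7.4} cannot be invoked and the iteration does not start. (Also, $v(0)\le 1$, not $\le 0$.) Second, the claim that tail contributions are ``benign because $u\ge 0$ globally and Theorem~\ref{thm-7.9} already gives integrable $L^{\varepsilon_*}$-decay'' underestimates the step. When you pass to $v^+$ on a small ball, the extremal operator picks up $\int_{|y|\ge \delta r}\bigl(u(x+y)-AM_k\bigr)_+|y|^{-n-\sigma}\,dy$, and a decay of the distribution function does not by itself control a kernel-weighted integral. The paper handles this with the barrier $h_{c_1}(x)=c_1(1-|x|^2/R^2)_+$: the equation tested at the touching point $x_1$ with $\nabla h_{c_1}$ yields $(2-\sigma)\int\mu^+(u,x_1,y;\nabla h_{c_1})/|y|^{n+\sigma}\,dy\le C/R^\sigma$, which in turn dominates the tail. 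This barrier step is not optional. A lesser inaccuracy: applying Theorem~\ref{thm-7.9} ``at $x_k$'' where $u(x_k)\ge M_k$ only gives a bound of order $r_k^n$, not a small fraction of $|B_{r_k}|$; the smallness should come from Theorem~\ref{thm-7.8} (equivalently, Theorem~\ref{thm-7.9} at the normalized minimum point), which gives $|\{u>M_k/2\}\cap B_R|\lesssim M_k^{-\varepsilon_*}R^n$, followed by restriction to $B_{r_k}(x_k)$.
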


\pf
(Case {\bf 1}: $1<\sigma_0<\sigma<2$.)

Let $\hat x\in B_{R/2}$ be a point so that
$\inf_{B_{R/2}}u=u(\hat x)$. Then it is enough to show that
$$\sup_{B_{R/2}} u\le C \,\bigl(u(\hat x)+C_0 \bigr).$$ Without loss of
generality, we may assume that $u(\hat x)\le 1$ and $C_0=1$ by
dividing $u$ by $u(\hat x)+C_0 $. Let $\vep_*>0$ be the number given
in Theorem \ref{thm-7.9} and let $\bt=n/\vep_*$. We now set
$s_0=\inf\{s>0:u(x)\le s(1-|x|/R)^{-\bt},\,\forall\,x\in B_R\}.$ Then
we see that $s_0>0$ because $u$ is positive on $\BR^n$. Also there
is some $x_0\in B_R$ such that
$u(x_0)=s_0 (1-|x_0|/R)^{-\bt}=s_0\left(\f{\dd_0}{R}\right)^{-\bt}$ where
$\dd_0=\dd(x_0,\pa B_R)\le R$.

To finish the proof, we have only to show that $s_0$ can not be too
large because $u(x)\le C_1 (1-|x|/R)^{-\bt}\le C$ for any $x\in
B_{R/2}$ if $C_1>0$ is some constant with $s_0\le C_1$. Assume that
$s_0$ is very large. Then by Theorem \ref{thm-7.8} we have that
$$\bigl|\{u\ge u(x_0)/2\}\cap
B_R\}\bigr|\le\biggl|\f{2}{u(x_0)}\biggr|^{\vep_*} |B_R|\le C
s_0^{-\vep_*}\dd_0^n.$$ Since $|B_{r}|=C\dd_0^n$ for $r=\dd_0/2<R$,
we easily obtain that \begin{equation}\label{eq-8.1}\bigl|\{u\ge u(x_0)/2\}\cap
B_{r}(x_0)\}\bigr|\le\biggl|\f{2}{u(x_0)}\biggr|^{\vep_*}\le C
s_0^{-\vep_*}|B_{r}|.\end{equation} In order to get a contradiction, we
estimate $|\{u\le u(x_0)/2\}\cap B_{\dt r}(x_0)|$ for some very
small $\dt>0$ (to be determined later). For any $x\in B_{2\dt
r}(x_0)$, we have that $u(x)\le s_0(\dd_0-\dt\dd_0 /R)^{-\bt}\le
u(x_0)(1-\dt)^{-\bt}$ for $\dt>0$ so that $(1-\dt)^{-\bt}$ is close
to $1$. We consider the function
$$v(x)=(1-\dt)^{-\bt}u(x_0)-u(x).$$ Then we see that $v\ge 0$ on
$B_{2\dt r}(x_0)$, and also $\cM_{\fL_0}^- v\le \f{1}{R^\sigma}$ on
$B_{\dt r}(x_0)$ because $\cM_{\fL_0}^+ u\ge -\f{1}{R^\sigma} $ on
$B_{\dt r}(x_0)$. We now want to apply Theorem \ref{thm-7.9} to $v$.
However $v$ is not positive on $\BR^n$ but only on $B_{\dt r}(x_0)$.
To apply Theorem \ref{thm-7.9}, we consider $w=v^+$ instead of $v$.
Since $w=v+v^-$, we have that $\cM_{\fL_0}^- w\le\cM_{\fL_0}^-
v+\cM_{\fL_0}^+ v^-\le \f{1}{R^\sigma} +\cM_{\fL_0}^+ v^-$ on
$B_{\dt r}(x_0)$. Since $v^-\equiv 0$ on $B_{2\dt r}(x_0)$, if $x\in
B_{\dt r}(x_0)$ then we have that $\mu(v^-,x,y;\n\vp)=v^-(x+y)$,
$y\in B_{\dt r}(x_0)$ and $\vp\in\rC^2_{B_{\dt r}(x_0)}(v^-;x)^+$.
Take any $\vp\in\rC^2_{B_{\dt r}(x_0)}(v^-;x)^+$ and any $x\in
B_{\dt r}(x_0)$. Since $x+B_{\dt r}\subset B_{2\dt r}(x_0)$, we thus
have that
\begin{equation}\label{eq-8.2}
\begin{split}&\cM_{\fL_0}^- w(x;\n\vp)\\&\qquad\le
\f{1}{R^2}+(2-\sm)\int_{\BR^n}\f{\Ld\mu^+(v^-,x,y;\n\vp)-\ld\mu^-(v^-,x,y;\n\vp)}{|y|^{n+\sm}}\,dy\\
&\qquad\le  \f{1}{R^\sigma}+(2-\sm)\int_{\{y\in\BR^n:v(x+y)<0\}}\f{-\Ld \,v(x+y)}{|y|^{n+\sm}}\,dy\\
&\qquad\le  \f{1}{R^\sigma}+(2-\sm)\Ld\int_{\BR^n\s B_{\dt
r}}\f{\bigl(u(x+y)-(1-\dt)^{-\bt}u(x_0)\bigr)_+}{|y|^{n+\sm}}\,dy.\end{split}\end{equation}
We consider the function $h_c(x)=c(1-|x|^2/R^2)_+$ for $c>0$ and we set
$$c_1=\sup\{c>0:u(x)\ge h_c(x),\,\forall\,x\in\BR^n\}.$$ Then
there is some $x_1\in B_R$ such that $u(x_1)=c_1(1-|x_1|^2/R^2)$ and we
see that $c_1\le 4/3$ because $u(\hat x)\le 1$. Since $\n
h_{c_1}(x)=-\f{2 c_1 x}{R^2}$, we have that
\begin{equation}\label{eq-8.3}
\begin{split}
&(2-\sm)\int_{\BR^n}\f{\mu^-(u,x_1,y;\n
  h_{c_1})}{|y|^{n+\sm}}\,dy\\&\qquad\le (2-\sm)\int_{\BR^n}\f{\left(h_{c_1}(x_1+y)
  -h_{c_1}(x_1)-y\cdot \n
  h_{c_1}(x_1) \chi_{B_1}(y)\right)_-}{|y|^{n+\sm}}\,dy\\
&\qquad\le C(2-\sm)\int_{ B_R}\f{|y|^2/R^2}{|y|^{n+\sm}}\,dy+ C(2-\sm)\int_{B_1\s B_R}\f{|y|/R}{|y|^{n+\sm}}\,dy\\
&\qquad\le\f{C(2-\sm_0)}{R^{\sigma}}
\end{split}
\end{equation}
for some constant $C>0$ which is independent of $\sm$, and so we
have that $$\Ld(2-\sm)\int_{\BR^n}\f{\mu^-(u,x_1,y;\n
h_{c_1})}{|y|^{n+\sm}}\,dy\le \f{C}{R^\sigma}.$$ Since
$\cM_{\fL_0}^- u(x_1)\le \f{1}{R^\sigma}$ on $B_{2R}$, by
\eqref{eq-8.3} we have that
\begin{equation*}\begin{split} \f{1}{R^\sigma}&\ge\cM_{\fL_0}^- u(x_1;\n h_{c_1})\\
&\ge\ld(2-\sm)\int_{\BR^n}\f{\mu^+(u,x_1,y;\n
h_{c_1})}{|y|^{n+\sm}}\,dy\\&-\Ld(2-\sm)\int_{\BR^n}\f{\mu^-(u,x_1,y;\n
h_{c_1})}{|y|^{n+\sm}}\,dy.\end{split}\end{equation*} Thus we obtain
that $\,\ds(2-\sm)\int_{\BR^n}\f{\mu^+(u,x_1,y;\n
h_{c_1})}{|y|^{n+\sm}}\,dy\le \f{C}{R^\sigma}\,$ for a constant
$C>0$ which is independent of $\sm$.  We may assume that
$(1-\dt)^{-\bt}u(x_0)=(1-\dt)^{-\bt}s_0(1-|x_0|/R)^{-\bt}\ge 4$
because $s_0$ was very large and $(1-\dt)^{-\bt}$ was close to
$1/R$. Since $\dt r<R$, by the change of variables we have that
\begin{equation*}\begin{split}&(2-\sm)\Ld\int_{B^c_{\dt
r}}\f{\bigl(u(x+y)-(1-\dt)^{-\bt}u(x_0)\bigr)_+}{|y|^{n+\sm}}\,dy\\
&\qquad\le
C(2-\sm)\Ld\int_{\BR^n}\f{\bigl(u(x_1+y)-4/R\bigr)_+}{|y|^{n+\sm}}\,dx\\
&\qquad\leq \,\ds(2-\sm)\int_{\BR^n}\f{\mu^+(u,x_1,y;\n
h_{c_1})}{|y|^{n+\sm}}\,dy \le
\f{C}{R^{\sigma}}\end{split}\end{equation*} for any $x\in B_{\dt
r}(x_0)$. Thus by \eqref{eq-8.2} we obtain that
$$\cM_{\fL_0}^- w(x)\le \f{C}{R^{\sigma}}\leq \f{C}{(\dt r)^{\sigma}}\,\,\text{ on $B_{\dt
r}(x_0)$ }.$$ Since $u(x_0)=s_0(\dd_0/R)^{-\bt}=2^{-\bt}s_0
(r/R)^{-\bt}$ and $\bt\vep_*=n$, applying Theorem \ref{thm-7.9} we
have that
\begin{equation*}\begin{split}&\bigl|\{u\le u(x_0)/2\}\cap B_{\dt r/2}(x_0)\bigr|
=\bigl|\{w\ge u(x_0)((1-\dt)^{-\bt}-1/2)\}
\cap B_{\dt r/2}(x_0)\bigr|\\
&\qquad\le C(\dt r)^n\bigl[((1-\dt)^{-\bt}-1)u(x_0)+C(\dt
r)^{-n-\sm}(\dt
r)^{\sm}\bigr]^{\vep_*}\bigl[u(x_0)((1-\dt)^{-\bt}-1/2)\bigr]^{-\vep_*}\\
&\qquad\le C(\dt r)^n\bigl[((1-\dt)^{-\bt}-1)^{\vep_*}+\dt
^{-n\vep_*}s_0^{-\vep_*}\bigr].\end{split}\end{equation*} We now
choose $\dt>0$ so small enough that $C(\dt
r)^n((1-\dt)^{-\bt}-1)^{\vep_*}\le |B_{\dt r/2}(x_0)|/4.$ Since
$\dt$ was chosen independently of $s_0$, if $s_0$ is large enough
for such fixed $\dt$ then we get that $C(\dt r)^n\dt
^{-n\vep_*}s_0^{-\vep_*}\le |B_{\dt r/2}(x_0)|/4.$ Therefore we
obtain that $\bigl|\{u\le u(x_0)/2\}\cap B_{\dt r/2}(x_0)\bigr|\le
|B_{\dt r/2}(x_0)|/2.$ Thus we conclude that
\begin{equation*}\begin{split}\bigl|\{u\ge u(x_0)/2\}\cap B_r(x_0)\bigr|&\ge\bigl|\{u\ge u(x_0)/2\}\cap
B_{\dt r/2}(x_0)\bigr|\\&\ge\bigl|\{u>u(x_0)/2\}\cap B_{\dt
r/2}(x_0)\bigr|\\&\ge\bigl|B_{\dt r/2}(x_0)\bigr|-\bigl|B_{\dt
r/2}(x_0)\bigr|/2\\&=\bigl|B_{\dt r/2}(x_0)\bigr|/2=C
|B_r|,\end{split}\end{equation*} which contradicts \eqref{eq-8.1} if
$s_0$ is large enough. Thus we complete the proof.

(Case {\bf 2}: $0<\sigma\leq 1$.)
The main step in the argument above is to control the error $\cM_{\fL_0}^+v^-$ for $\cM_{\fL_0,R,\eta}^- w(x;\n\vp)$.
The conclusion comes from the same line of arguments by replacing $\cM_{\fL_0}^-u$ by $\cM_{\fL_0,R,\eta}^- u$.
\qed
%%%%%%%%%%%%%%%%%%%%%%%%%%%%%%%%%%%%%%%%%%%%%%%%%%%%%%%%%%%%%%%%%%%%%%%%%%%%%%%%%%%%%%%%%%%
%%%%%%%%%%%%%%%%%%%%%%%%%%%%%%%%%%%%%%%%%%%%%%%%%%%%%%%%%%%%%%%%%%%%%%%%%%%%%%%%%%%%%%%%%%%
\subsection{ H\"older estimates}\label{sec-9} In this subsection, we
obtain H\"older regularity result. The following technical lemma is
very useful in proving it. As in \cite{CS,KL}, its proof can be
derived from Theorem \ref{thm-7.9}.

\begin{lemma}\label{lem-9.1}
Let $\sm_0\in (1,2)$ and assume that  $\,\sm\in
(0,1]$ or $\,\sm\in
(\sm_0,2]$ and that $R\in(0,R_0]$.  If $u$ is a bounded function with $|u|\le 1/2$
on $\BR^n$ such
that
$$\cM_{\fL_0}^- u\le \f{C_0}{R^\sigma}\,\,\,\text{ and }\,\,\,\cM_{\fL_0}^+ u\ge -\f{C_0}{R^\sigma}\,\,\text{
with $\sigma_0<\sigma<2$
on $B_{2R}$}$$
or
$$\cM_{\fL_0,R,\eta}^- u\le \f{C_0}{R^\sigma}\,\,\,\text{ and }\,\,\,\cM_{\fL_0,R,\eta}^+ u\ge -\f{C_0}{R^\sigma}\,\,\text{
with $0<\sigma \leq 1$
on $B_{2R}$}$$

in the viscosity sense where $\vep_0>0$ is some
sufficiently small constant, then there is some universal constant
$\ap>0$ $($depending only on $\ld,\Ld,n$ and $\sm_0$$)$ such that
$u\in\rC^{\ap}$ at the origin. More precisely,
$$|u(x)-u(0)|\le C\,\f{|x|^{\ap}}{R^{\ap}}$$ for some universal constant $C>0$ .
For $\sm\in (\sm_0,2)$, $\alpha$ and $C$ depend only on $\ld,\Ld$ , the dimension $n$, and $\sm_0$. And for $\,\sm\in
(0,1]$ , $\alpha$ and  $C$ depend only on $\ld,\Ld$ , $n$, $\sm$, and $\eta$.
\end{lemma}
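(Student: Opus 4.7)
The plan is to follow the classical oscillation-decay scheme of Caffarelli--Silvestre (as adapted to nonsymmetric kernels in \cite{KL}), deriving H\"older regularity from the weak $L^{\vep_*}$ estimate of Theorem \ref{thm-7.9}. Concretely, I would construct inductively two sequences of real numbers $\{m_k\}_{k\ge 0}$ and $\{M_k\}_{k\ge 0}$, together with a geometric sequence of radii $r_k = R\rho^k$ for some universal $\rho\in(0,1)$ to be fixed, such that
\begin{equation*}
m_k\le u\le M_k\quad\text{on } B_{r_k},\qquad M_k-m_k \le \mu^k,
\end{equation*}
where $\mu\in(0,1)$ is a universal constant. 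The base case $k=0$ uses the hypothesis $|u|\le 1/2$: take $M_0=1/2$, $m_0=-1/2$. The H\"older exponent $\ap$ is then determined by $\rho^{\ap}=\mu$; indeed, once the induction is established, for any $x\ne 0$ with $|x|\le R$ choose $k$ so that $r_{k+1}<|x|\le r_k$, and obtain $|u(x)-u(0)|\le M_k-m_k\le \mu^k= (r_k/R)^{\ap}\le C(|x|/R)^{\ap}$.

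For the inductive step I would normalize: set $v(x)=\frac{2}{M_k-m_k}\bigl(u(x)-\tfrac{M_k+m_k}{2}\bigr)$, so that $|v|\le 1$ on $B_{r_k}$ and $v$ inherits extremal inequalities of the same form as $u$, with right-hand side of order $r_k^{-\sigma}$ (after accounting for the rescaling of the bound). Without loss of generality $|\{v\le 0\}\cap B_{r_{k+1}}|\ge\tfrac12 |B_{r_{k+1}}|$; otherwise replace $v$ by $-v$. Define the globalized nonnegative function $w(x)=(1-v(x))^+$ on $\BR^n$. Then $w\ge 0$ everywhere, $w\le 2$ on $B_{r_k}$, and $|\{w\ge 1\}\cap B_{r_{k+1}}|\ge\tfrac12|B_{r_{k+1}}|$.

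The crucial computation is an extremal inequality for $w$. Writing $w=(1-v)+(v-1)^+$ and noting that $(v-1)^+$ is supported outside $B_{r_k}$ (so has vanishing gradient on $B_{r_{k+1}}$), I would bound
\begin{equation*}
\cM^-_{\fL_0,r_{k+1},\eta} w(x)\le \cM^-_{\fL_0,r_{k+1},\eta}(1-v)(x)+(2-\sm)\Ld\int_{B_{r_k}^c}\frac{(v(y)-1)^+}{|y-x|^{n+\sm}}\,dy
\end{equation*}
for $x\in B_{r_{k+1}}$ (and analogously for the standard extremal operator when $\sm>1$). The first term is controlled by $C r_{k+1}^{-\sm}$ through the hypothesis on $u$. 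The tail integral is handled by the previously established inductive bounds: on the dyadic annulus $B_{r_{k-j}}\setminus B_{r_{k-j+1}}$ one has $|v|\le C\mu^{-j}=C\rho^{-\ap j}$, so the tail is bounded by a geometric series in $\rho^{-\ap j}r_{k-j}^{-\sm}$ whose sum — provided $\rho$ is chosen small enough relative to $\ap$ — is majorized by a constant times $r_{k+1}^{-\sm}$. Once this bound $\cM^-_{\fL_0,r_{k+1},\eta}w\le C r_{k+1}^{-\sm}$ is established, Theorem \ref{thm-7.9} applied at $x=0$ with radius $r_{k+1}$, together with the measure lower bound for $\{w\ge 1\}$, forces $w(0)\ge\dt$ and more generally $\inf_{B_{r_{k+1}}}w\ge\dt$ for some universal $\dt>0$ (by a standard covering/contradiction argument, or equivalently by applying Theorem \ref{thm-8.1}). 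Translating back: $v\le 1-\dt$ on $B_{r_{k+1}}$, hence $u\le M_k-\tfrac{\dt}{2}(M_k-m_k)$ on $B_{r_{k+1}}$. Setting $M_{k+1}=M_k-\tfrac{\dt}{2}(M_k-m_k)$ and $m_{k+1}=m_k$ closes the induction with $\mu=1-\dt/2$.

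The main obstacle is the tail estimate of the previous paragraph, which is the prototypical nonlocal difficulty: the rescaled $v$ is bounded only on $B_{r_k}$ and grows polynomially at earlier scales, so one must verify that the kernel decay $|y|^{-n-\sm}$ beats this growth after summation of the geometric series. This calibration is what fixes the admissible $\rho$ (and hence $\ap$). A secondary obstacle, specific to the present nonsymmetric setting, is that in the regime $0<\sm\le 1$ the naive extremal operator $\cM^-_{\fL_0}$ produces an uncontrollable gradient contribution from $(1-v)$; this is precisely why the hypothesis is phrased with $\cM^{\pm}_{\fL_0,R,\eta}$ and the class $\cS^{\fL_0}_{\eta}$ (Definition \ref{def-S-eta}), whose improved gradient bound $(2-\sm)CR^{1-\sm}|\n u|$ makes the argument close.
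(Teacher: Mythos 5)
Your proposal is correct and reproduces the route the paper itself takes: the authors do not write out a proof of Lemma \ref{lem-9.1} but explicitly defer to \cite{CS,KL}, and the dyadic oscillation-decay iteration you describe — normalize the oscillation on $B_{r_k}$, truncate to the nonnegative $w=(1-v)^+$, control the nonlocal tail via the inductively established growth bound, then invoke the level-set decay of Theorem \ref{thm-7.9} to propagate a strict pointwise improvement to $B_{r_{k+1}}$ — is precisely the Caffarelli--Silvestre argument those references employ. Your closing remark about why $\cM^{\pm}_{\fL_0,R,\eta}$ replaces $\cM^{\pm}_{\fL_0}$ when $0<\sm\le 1$ also matches the paper's stated motivation for introducing the class $\cS^{\fL_0}_{\eta}$.
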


Lemma \ref{lem-9.1} and a simple rescaling argument give the
following theorem  as in \cite{CS,KL}.
\begin{thm}\label{thm-9.2}
Let $\sm_0\in (1,2)$ and assume that  $\,\sm\in
(0,1]$ or $\,\sm\in
(\sm_0,2]$ and that $R\in(0,R_0]$.  If $u$ is a bounded function
on $\BR^n$ such
that
$$\cM_{\fL_0}^- u\le \f{C_0}{R^\sigma}\,\,\,\text{ and }\,\,\,\cM_{\fL_0}^+ u\ge -\f{C_0}{R^\sigma}\,\,\text{
with $\sigma_0<\sigma<2$
on $B_{2R}$}$$
or
$$\cM_{\fL_0,R,\eta}^- u\le \f{C_0}{R^\sigma}\,\,\,\text{ and }\,\,\,\cM_{\fL_0,R,\eta}^+ u\ge -\f{C_0}{R^\sigma}\,\,\text{
with $0<\sigma \leq 1$
on $B_{2R}$}$$
in the viscosity sense, then there is some constant
$\ap>0$ such that
$$\|u\|_{\rC^{\ap}(B_{R/2})}\le
\frac{C}{R^{\alpha}}\bigl(\,\|u\|_{L^{\iy}(\BR^n)}+C_0\bigr)$$ where $C>0$ is some
universal constant.
For $\sm\in (\sm_0,2)$, $\alpha$ and $C$ depend only on $\ld,\Ld$ , the dimension $n$, and $\sm_0$. And for $\,\sm\in
(0,1]$ , $\alpha$ and  $C$ depend only on $\ld,\Ld$ , $n$, $\sm$, and $\eta$.
\end{thm}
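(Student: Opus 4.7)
The plan is to derive Theorem \ref{thm-9.2} from the pointwise oscillation estimate of Lemma \ref{lem-9.1} by a standard normalization-plus-translation argument; no new integro-differential machinery is needed, only the positive homogeneity and translation invariance already built into the classes $\fL_0$ and $\cS^{\fL_0}_{\eta}$.

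First I would normalize. Setting
\[
K := c_*\bigl(\|u\|_{L^{\iy}(\BR^n)}+C_0\bigr), \qquad v := u/K
\]
for a suitable universal constant $c_*>0$ achieves $|v|\le 1/2$ on $\BR^n$ and shrinks the right-hand side $C_0/R^{\sm}$ to $\tilde C_0/R^{\sm}$ with $\tilde C_0$ as small as required by Lemma \ref{lem-9.1}. Because each of $\cM^{\pm}_{\fL_0}$ and $\cM^{\pm}_{\fL_0,R,\eta}$ is positively homogeneous of degree one (and in the second case so are the drift correction $\cB^{\pm}_R(\n u)$ and the remainder $(2-\sm)CR^{1-\sm}|\n u|$), the function $v$ satisfies the same two-sided extremal inequalities on $B_{2R}$ with the smaller constant, so the hypotheses of Lemma \ref{lem-9.1} are met for $v$.

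Next I would translate to every base point. Fix $x_0\in B_{R/2}$ and set $w(y):=v(x_0+y)$. Since $x_0\in B_{R/2}$ implies $B_R(x_0)\subset B_{3R/2}\subset B_{2R}$, $w$ satisfies the two-sided extremal inequalities on $B_R$, i.e., on the ball $B_{2\vr}$ for $\vr:=R/2\le R_0$. Translation invariance of the kernel class $\fL_0$ (and of the drift structure in $\cS^{\fL_0}_{\eta}$) ensures that the inequalities for $w$ have exactly the same form as those for $v$, with $R$ replaced by $\vr$ and the right-hand side absorbing at most a factor $2^{\sm}$. Applying Lemma \ref{lem-9.1} to $w$ with radius parameter $\vr$ therefore yields universal constants $\ap>0$ and $C>0$ with
\[
|v(x)-v(x_0)| = |w(x-x_0)-w(0)| \le C\,|x-x_0|^{\ap}/R^{\ap}
\]
in a neighborhood of $x_0$; the global bound $|v|\le 1/2$ lets this inequality be extended (with a possibly enlarged $C$) to all $x\in B_{R/2}(x_0)$. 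Taking the supremum over $x_0\in B_{R/2}$ gives $[v]_{\rC^{\ap}(B_{R/2})}\le CR^{-\ap}$, and combining with $\|v\|_{L^{\iy}}\le 1/2$ and multiplying through by $K$ yields the claimed bound $\|u\|_{\rC^{\ap}(B_{R/2})}\le CR^{-\ap}\bigl(\|u\|_{L^{\iy}(\BR^n)}+C_0\bigr)$.

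The main point requiring care (rather than a genuine obstacle) is the bookkeeping in the critical/supercritical regime $0<\sm\le 1$: one must verify that both $\cB^{\pm}_R(\n u)$ and the remainder $(2-\sm)CR^{1-\sm}|\n u|$ transform consistently under the translation $v\mapsto w$ and under the radius change $R\mapsto\vr=R/2$, so that the translated $w$ still lies in $\cS^{\fL_0}_{\eta}$ with parameter $\vr$. This is immediate from the degree-one homogeneity of $\cB^{\pm}_R$ together with the elementary comparison $\vr^{1-\sm}\le 2^{|\sm-1|}R^{1-\sm}$, so the constants stay uniform in $\sm$ and in $R\in(0,R_0]$ in the two declared regimes.
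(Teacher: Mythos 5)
Your approach---normalize the amplitude so $|v|\le 1/2$ and the source is small, translate to each base point $x_0\in B_{R/2}$, and apply Lemma~\ref{lem-9.1} with the shrunk radius $\vr=R/2$---is exactly the standard normalization-plus-covering argument the paper invokes when it says the theorem follows from Lemma~\ref{lem-9.1} by ``a simple rescaling argument \dots as in \cite{CS,KL}.''  Your flag about the $R$-dependence of $\cM^{\pm}_{\fL_0,R,\eta}$ (the cutoff $\chi_{B_R}$, the drift $\cB^{\pm}_R$, and the remainder $(2-\sm)CR^{1-\sm}|\n u|$) in the regime $0<\sm\le 1$ correctly identifies the one place where the bookkeeping under $R\mapsto\vr$ is nontrivial; the paper suppresses this detail as well, so your proposal tracks its intended proof.
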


\subsection{ $\rC^{1,\ap}$-estimates}\label{sec-10}

We consider the class $\fL^1_0$ consisting of the operators
$\cL\in\fL_0$ associated with kernels $K$ for which \eqref{eq-2.3}
holds and there exists some $\vr_1>0$ such that
\begin{equation}\label{eq-10.1}\sup_{h\in B_{\vr_1/2}}\int_{\BR^n\s B_{\vr_1}}\f{|K(y)-K(y-h)|}{|h|}\,dy\le
C.\end{equation}

If $K$ is a radial function satisfying \eqref{eq-2.3}, then it is
interesting that the condition \eqref{eq-10.1} is not required.
Indeed, if $K(y)=(2-\sm)A/|y|^{n+\sm}$ for $\ld\le A\le\Ld$, then it
follows from the mean value theorem and the Schwartz inequality that
\begin{equation*}\begin{split}\sup_{h\in B_{\vr_1/2}}\int_{\BR^n\s
B_{\vr_1}}\f{|K(y)-K(y-h)|}{|h|}\,dy&=(2-\sm)2^{n+\sm+1}\f{(n+\sm)\Ld\om_n}{(\sm+1)\vr_1^{1+\sm}}\le
C\,,\end{split}\end{equation*} because $|y|\ge 2|h|$ for any $h\in
B_{\vr_1/2}$ and $y\in\BR^n\s B_{\vr_1}$ and $|y-\tau h|\ge
|y|-|h|\ge |y|-|y|/2=|y|/2$ for $\tau\in [0,1]$.

If we apply Theorem \ref{thm-9.2} on the H\"older difference
quotients which satisfies the same class of operators as the
solution, we will have the following interior
$C^{1,\alpha}$-estimate as in \cite{CS, CC}. For $R\in(0,R_0]$, we
set $\|u\|^*_{\rC^{1,\ap}(B_{R})}=\|u\|_{L^{\infty}(B_{R})}+R\|D
u\|_{L^{\infty}(B_{R})}+R^{1+\alpha}\|D u\|_{C^{\alpha}(B_{R})}$.

\begin{thm}\label{thm-10.1} For $\sm_0\in (1,2)$, Then there is some $\vr_1>0$ so that if $\,\cI\in \cS^{\fL^1_0}$ for $\sm\in (\sm_0,2)$
or $\,\cI\in \cS^{\fL^1_0}_{\eta}$ for $\sm\in (0,1)$  is a nonlocal
elliptic operator with respect to $\fL^1_0$ in the sense of
Definition \ref{def-3.1} and $u\in\rB(\BR^n)$ is a viscosity
solution to $\cI u=0$ on $B_1$, then there is a universal constant
$\ap>0$ such that
$$\|u\|^*_{\rC^{1,\ap}(B_{R/2})}\le C\bigl(\,\|u\|_{L^{\iy}(\BR^n)}+R^{\sigma}|\cI 0|\,\bigr)$$ for some constant $C>0$
and the constant given in \eqref{eq-10.1} $($where we denote by $\cI
0$ the value we obtain when we apply $\cI$   to the constant
function that is equal to zero$)$. For $\sm\in (\sm_0,2)$, $\rho_1$,
$\alpha$ and $C$ depend only on $\ld,\Ld$ , the dimension $n$, and
$\sm_0$. And for $\sm\in (0,1]$, $\rho_1$, $\alpha$ and  $C\,$
depend only on $\ld,\Ld,$ $n$, $\sm$ and $\eta$.

\end{thm}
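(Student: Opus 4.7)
The plan is to follow the Caffarelli--Silvestre bootstrap strategy: apply Theorem \ref{thm-9.2} iteratively to translation--difference quotients of $u$, increasing the H\"older exponent at each step until it exceeds $1$, and then one more iteration delivers the $C^{1,\alpha}$ estimate on $\nabla u$. The role of condition \eqref{eq-10.1} on the kernels in $\fL^1_0$ is precisely to guarantee that, up to a controlled tail error, the difference quotients still solve equations in the same extremal class, so that Theorem \ref{thm-9.2} can be re-applied at each stage.

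First I would normalize the problem: replacing $\cI$ by $\widetilde{\cI}u := \cI u - \cI 0$ reduces to the case $\widetilde{\cI} 0 \equiv 0$ at the cost of a right--hand side $-\cI 0$, which feeds through as the $R^\sigma|\cI 0|$ term in the final norm. Theorem \ref{thm-9.2} applied directly to $u$ then yields the base estimate $u \in C^{\alpha_0}(B_{R/2})$ with the correct scaling in $R$.

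Next, for small $h \in \R^n$, I consider the symmetric second--difference quotient
\begin{equation*}
w_h(x) \;=\; \frac{u(x+h) + u(x-h) - 2u(x)}{|h|^{\alpha_0}}.
\end{equation*}
By translation invariance of the kernels and property (c) of Definition \ref{def-3.1}, $w_h$ satisfies $\cM^-_{\fL^1_0} w_h \le 0 \le \cM^+_{\fL^1_0} w_h$ (respectively $\cM^\pm_{\fL^1_0,R,\eta}$ when $0<\sigma \le 1$) in the viscosity sense, modulo a forcing term arising from values of $u$ outside the ball $B_{\rho_1}$. That forcing is bounded by $\|u\|_{L^\infty(\R^n)}$ times $\sup_{|h|<\rho_1/2}\int_{B_{\rho_1}^c} |K(y) - K(y-h)|/|h|\,dy$, which is exactly the quantity controlled by \eqref{eq-10.1}. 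Using the base case to bound $w_h$ on the interior, Theorem \ref{thm-9.2} applied to $w_h$ gives $w_h \in C^{\alpha_0}$ uniformly in $h$, which translates into $u \in C^{2\alpha_0}$ on a slightly smaller ball.

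Iterating $k$ times yields $u \in C^{k\alpha_0}$ with the $R$--dependence propagating correctly through the rescaled version of Theorem \ref{thm-9.2}. Once $k\alpha_0$ exceeds $1$, the classical gradient $\nabla u$ exists and one further iteration, now applied to first--difference quotients of $u$ normalized so that they capture $\nabla u$, produces $\nabla u \in C^\alpha$ on $B_{R/2}$, yielding the claimed estimate for $\|u\|^*_{C^{1,\alpha}(B_{R/2})}$. The main obstacle I anticipate is the tail estimate in this iteration: condition \eqref{eq-10.1} is tailored so that translating the kernel produces an error of size $O(|h|)$ in the $L^1$ sense, and I must track how this error interacts with the $|h|^{-\alpha_0}$ normalization to ensure the forcing on $w_h$ remains bounded as $h\to 0$. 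A secondary subtlety for $0<\sigma\le 1$ is verifying that the drift correction $\cB^\pm_R$ of $\cS^{\fL^1_0}_\eta$ survives the differencing; this works because $\cB^\pm_R$ is homogeneous of degree one in the gradient, so it linearizes consistently under the first--difference step and vanishes in the second--difference step.
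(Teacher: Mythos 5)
Your overall framework is the right one and is what the paper alludes to: bootstrap Theorem \ref{thm-9.2} on incremental quotients, with \eqref{eq-10.1} supplying the $O(|h|)$ control on the far-field error produced by translating the kernel. But the specific device you chose, the \emph{second}-difference quotient $w_h=(u(\cdot+h)+u(\cdot-h)-2u)/|h|^{\alpha_0}$, does not in fact inherit the extremal inequalities from Definition \ref{def-3.1}(c), and this is a genuine gap. Property (c) gives $\cM^-_{\fL}(u_1-u_2)\le \cI u_1-\cI u_2\le \cM^+_{\fL}(u_1-u_2)$ for a \emph{pair} of functions, so if $u_1=u(\cdot+h)$ and $u_2=u$ the first-difference $u_1-u_2$ satisfies $\cM^-_{\fL}(u_1-u_2)\le 0\le\cM^+_{\fL}(u_1-u_2)$ on the ball where both are solutions. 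The second difference, however, is a sum of two first differences, $w_h|h|^{\alpha_0}=(u_1-u_2)+(u_3-u_2)$, and the Pucci-type extremal operators are only sub-/super-additive: $\cM^+_{\fL}(a+b)\le\cM^+_{\fL}a+\cM^+_{\fL}b$ and $\cM^-_{\fL}(a+b)\ge\cM^-_{\fL}a+\cM^-_{\fL}b$. Neither direction gives you the two-sided bound $\cM^-_{\fL}w_h\le C\le \cM^+_{\fL}w_h$ that Theorem \ref{thm-9.2} requires; you only know each summand has $\cM^-\le 0\le\cM^+$, which controls, for example, $\cM^-(a+b)$ from below and $\cM^+(a+b)$ from above, the useless directions.

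The fix, which is what \cite{CS,CC} actually do and what the paper means by ``H\"older difference quotients'', is to work with the \emph{first}-difference quotient $v_h=(u(\cdot+h)-u)/|h|^{\beta}$. By the translation invariance of the kernels and property (c), $v_h$ satisfies $\cM^-_{\fL^1_0}v_h\le 0\le\cM^+_{\fL^1_0}v_h$ (with the tail error controlled by \eqref{eq-10.1} exactly as you describe), and $v_h$ is bounded on the smaller ball by the inductive $C^\beta$ estimate. Theorem \ref{thm-9.2} then gives $[v_h]_{C^\alpha}$ bounded uniformly in $h$, and the real-analysis lemma (Caffarelli--Cabr\'e, Lemma 5.6) converts this into $u\in C^{\beta+\alpha}$, or $u\in C^{1,\beta+\alpha-1}$ once $\beta+\alpha>1$; iterating finitely many times yields the claim, with the $R^\sigma|\cI 0|$ term entering through the initial normalization as you propose. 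Finally, your remark that the drift term $\cB^\pm_R$ ``vanishes in the second-difference step'' is not correct even if one could run that iteration: in the class $\cS^{\fL}_\eta$ the maps $\cB^\pm_R$ are only positively homogeneous of degree one (e.g.\ a max of linear functionals, cf.\ Lemma \ref{lem-subclass-1}), not linear, so they neither linearize nor cancel under differencing. With first differences this is a non-issue: one does not try to difference the drift, one simply uses that $v_h$ itself satisfies the two-sided extremal inequalities for $\cS^{\fL^1_0}_\eta$, which is property (c) again.
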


\noindent{\bf Acknowledgement.} This work had been started during
Yong-Cheol Kim was visiting to University of Texas at Austin in the fall
semester 2008 for his sabbatical year. He would like to thank for
kind hospitality and deep concern of Professor Luis A. Caffarelli
during that time. We also thank Panki Kim for his helpful discussion. Ki-Ahm Lee was supported by Basic Science Research
Program through the National Research Foundation of Korea(NRF)  grant funded by the Korea government(MEST)(2010-0001985).

\end{document}